\documentclass[12pt,amssymb,verbatim]{amsart}

\usepackage{amsfonts,latexsym,geometry,color,hyperref,ulem}
\usepackage{amssymb,graphics,graphicx}
\usepackage{amsfonts,latexsym,geometry}
\usepackage{amssymb,graphics,graphicx}
\usepackage{bbm}
\usepackage{amscd}
\usepackage{amssymb}
\usepackage{amsthm}
\usepackage[utf8]{inputenc} 
\usepackage[T1]{fontenc}
\usepackage{amsmath}
\usepackage{amsfonts}
\usepackage{amssymb}
\usepackage{graphicx} 
\usepackage{enumerate}
\newcommand\supp{\mathrm{supp}}
\newtheorem{theoreme}{Theorem}[section] %
\newtheorem{proposition}[theoreme]{Proposition} %
\newtheorem{corollary}[theoreme]{Corollary} %
\newtheorem{lemme}[theoreme]{Lemma} %
\newtheorem{definition}{Definition}[section] %
\newtheorem{remark}[theoreme]{Remark} %
\newtheorem{conjecture}[theoreme]{Conjecture} %

\newcommand\sk{\smallskip}

\newcommand\R{\mathbb{R}}

\RequirePackage{yhmath} 
\renewcommand\widering[1]{\ring{#1}}

\author{E. Daviaud, Uliège}

\begin{document}
\title[Intrinsic Diophantine approximation]{Intrinsic Diophantine approximation by rationals of height with a bounded number of distinct prime factors}

\begin{abstract}
In the 80's Mahler suggested to study the Diophantine approximation by rationals lying in the middle-third Cantor set $K_{1/3}$. In this article,  writing, for every $N \in\mathbb{N},$ $\mathcal{P}_N$ the set of numbers with less than $N$ distinct prime factors, we show that, for any non-increasing mapping $\psi:\mathbb{N}\to \mathbb{R}_+,$ $$\dim_H \left\{x\in K_{1/3} : \ \vert x-\frac{p}{q}\vert\leq \psi(q)\text{ f.i.m. }\frac{p}{q}\in\mathbb{Q}\cap K_{1/3}, \ q\in\mathcal{P}_N \right\}=\frac{\dim_H K_{1/3}}{\max\left\{1,\delta_{\psi}\right\}},$$
where $\dim_H \cdot$ denotes the Hausdorff dimension and $\delta_{\psi}$ is the shrinking rate of $\psi.$ This result will be derived out a more general theorem regarding homogeneous self-similar rational IFS's. In addition, we establish several results regarding the intrinsic Diophantine approximation associated with inhomogeneous rational IFS's.
\end{abstract}
\maketitle

\section{Introduction}
The study of rational approximation on self-similar fractals was first suggested by Mahler~\cite{Mahler}. He asked how well numbers in the middle-third Cantor set can be approximated by rationals, and also by rationals lying in the middle-third Cantor set itself. Both of these questions have attracted much interest, and many results have recently been established.

Among these results, Bénard, He and Zhang \cite{KintFract} showed that if $\mu$ is a self-similar measure on $\mathbb{R}$ (Definition \ref{def-ssmu}), then for any non-increasing mapping $\psi: \mathbb{N}\to \mathbb{R}_+$ one has (where ``f.i.m.'' stands for ``for infinitely many'')
\begin{align*}
&\mu\Big(\Big\{x\in \mathbb{R}: \ |x-\tfrac{p}{q}|\leq \psi(q)\ \text{f.i.m. } \tfrac{p}{q}\in\mathbb{Q}\Big\}\Big)=1 
\ \Leftrightarrow \ \sum_{q\geq 1}q\psi(q)=+\infty,\\
&\mu\Big(\Big\{x\in \mathbb{R}: \ |x-\tfrac{p}{q}|\leq \psi(q)\ \text{f.i.m. } \tfrac{p}{q}\in\mathbb{Q}\Big\}\Big)=0 
\ \Leftrightarrow \ \sum_{q\geq 1}q\psi(q)<+\infty.
\end{align*}

Concerning the dimension of sets of points approximable at rate $\delta\geq 2$ (that is, when $\psi(q)=q^{-\delta}$, the rate $2$ being the threshold provided by Dirichlet's theorem) by rational numbers in the middle-third Cantor set $K_{1/3}$, Bugeaud and Durand~\cite{Bugdu} conjectured that
\begin{align*}
&\dim_H \Big\{x\in K_{1/3}: \ |x-\tfrac{p}{q}|\leq q^{-\delta}\ \text{f.i.m. } \tfrac{p}{q}\in\mathbb{Q}\Big\}\\
&=\max\Big\{\frac{\dim_H K_{1/3}}{\delta},\ \dim_H K_{1/3}+\frac{2}{\delta}-1\Big\}.
\end{align*}
Here, the first term corresponds to the intrinsic contribution (obtained by considering the approximation by rationals lying in the middle-third Cantor set), while the second corresponds to the extrinsic one. In this direction, Chow,Varjú and Yu~\cite{YuVarju} established that, in the case of the middle-seventh Cantor set, there exists $\delta_0>2$ such that the above formula holds for any $2\leq \delta\leq \delta_0$. This result was later generalized by Chen \cite{YuVarjuGene} and He and Liao \cite{HeLiaoJarnik} to self-similar sets satisfying the open set condition.

Regarding intrinsic approximation, namely approximation by rationals lying inside a given self-similar set, it is worth mentioning that it is not generic for a self-similar set to contain rational numbers. Indeed, given an homogeneous self-similar set $K\subset \mathbb{R}$, it is easily checked that for every $y\in\mathbb{R}$, $y+K$ is also the attractor of an homogeneous self-similar IFS. Moreover, one can show that
\[
\begin{cases}
\text{if }\mathcal{L}(K)>0\text{ then for a.e.\ }y\in\mathbb{R},\ \mathbb{Q}\cap (y+K)\neq \emptyset,\\
\text{if }\mathcal{L}(K)=0\text{ then for a.e.\ }y\in\mathbb{R},\ \mathbb{Q}\cap (y+K)= \emptyset.
\end{cases}
\]
The interested reader may refer to the appendix for a proof of this fact.

A natural class of self-similar sets known to contain rational numbers consists of those defined by rational parameters. The first results in this setting were obtained by Beresnevich and Velani~\cite{BV}, and refined by Levesley, Salp and Velani~\cite{LSV}, who provided a complete description of the metric theory of $\psi$-approximable points in base~3 in $K_{1/3}$. The problem was also studied, among others, by Fishman and Simmons~\cite{FishSim}, Tan, Wang and Wu~\cite{TWWTriad}, and Baker \cite{Baker}.
 Writing $h(r)$  the height of $r\in\mathbb{Q},$ that is, $h(r)=q$ if $r=\frac{p}{q}$ with $p \wedge q=1$, in~\cite{FishSim} and~\cite{TWWTriad}, the authors proposed the following conjecture (expressed differently),  extending that of Bugeaud and Durand\footnote{A careful reader will notice that the set investigated by Bugeaud and Durand does not involve the height $h(r)$. It as an exercise to show that, for any non increasing $\psi:\mathbb{N}\to \mathbb{R}_+,$ $\left\{x\in K_{1/3} : \ \vert x-\frac{p}{q}\vert\leq \psi(q)\text{ f.i.m. }(p,q)\in\mathbb{Z}\times \mathbb{N} \text{ s.t. }\frac{p}{q}\in K_T\right\}=\left\{x\in K_{1/3} : \ \vert x-r\vert\leq \psi(h(r))\text{ f.i.m. }r\in \mathbb{Q}\cap K_T\right\}\bigcup (\mathbb{Q}\cap K_T)$. Thus, the choice of the definition of the approximating sets does not change the dimension results.}: for any non-increasing mapping $\psi:\mathbb{N}\to \mathbb{R}_+$, writing
\begin{equation}
\label{DefDeltaPsi}
 \delta_{\psi}=\liminf_{q\to +\infty}\frac{-\log \psi(q)}{\log q},
\end{equation} 
\begin{align}
\label{ConjJArIntr}
\dim_H \Big\{x\in K_{1/3} : \ |x-r|\leq \psi(h(r))\ \text{f.i.m. } r\in \mathbb{Q}\cap K_{1/3}\Big\}=\frac{\dim_H K_{1/3}}{\max\Big\{1,\ \delta_{\psi}\Big\}}.
\end{align}

Given an integer $n\in\mathbb{N}$ who's decomposition in prime numbers is $n=\prod_{i=1}^{k_n}p_i^{\alpha_i},$ we write $$\omega(n)=k_n$$
and given $N \in\mathbb{N},$ we set $$\mathcal{P}_N =\left\{q\in\mathbb{N} :  \ \omega(q)\leq N\right\}.$$
In this article we make a step toward this conjecture by proving the following.

\begin{theoreme}
\label{ThmMahlerConj}
Let $b\geq 2$ and  $p_1,\dots,p_m \in\mathbb{Z}$ be integers and define
\[
T=\Big\{f_i(\cdot)=\frac{\cdot}{b}+\frac{p_i}{b}\Big\}_{1\leq i\leq m}.
\]
Let $K_T$ be the attractor of $T$ (see Definition \ref{def-ssmu}).  For any non-increasing $\psi:\mathbb{N}\to \mathbb{R}_+$, write
$$E_{\psi, T,N}=\Big\{x\in K_T : \ |x-r|\leq \psi(h(r))\ \text{f.i.m. } r\in \mathbb{Q}\cap K_T, \ h(r)\in\mathcal{P}_N  \Big\}.$$
If $K$ has empty interior, then
\begin{itemize}
\item[(1)] for every $N \geq \omega(b)+\omega(b-1),$ \[
\dim_H E_{\psi, T,N}=\frac{\dim_H K_T}{\max\{1,\delta_{\psi}\}},
\]
where $\delta_{\psi}$ is defined as in \eqref{DefDeltaPsi}.\medskip
\item[(2)] if there exists $p\in\mathbb{Z},k\in\mathbb{N}$ such that $\frac{p}{b^k}\in K_T,$ then  item $(1)$ holds for every $N\geq \omega(b).$
\end{itemize}

\end{theoreme}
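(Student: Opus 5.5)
The plan is to prove an upper bound and a lower bound separately. Write $s=\dim_H K_T$ and $\mathcal{R}=\mathbb{Q}\cap K_T$. The starting observation is that rationals in $K_T$ are exactly the points whose $T$-coding is eventually periodic: preperiod of length $k$, period of length $\ell$. Such a point has the form
\[
r=\frac{a}{b^k}+\frac{1}{b^k}\cdot\frac{c}{b^\ell-1}, \qquad a,c\in\mathbb{Z},
\]
so its height divides $b^k(b^\ell-1)$.

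\textbf{Upper bound.} If $\delta_\psi\le 1$ there is nothing to prove, since $E_{\psi,T,N}\subset K_T$. Otherwise, fix $\delta<\delta_\psi$. For large heights, every relevant $r$ then satisfies $\psi(h(r))\le h(r)^{-\delta}$. I will cover $E_{\psi,T,N}$ by the balls $B(r,h(r)^{-\delta})$ with $r\in\mathcal{R}$ and $h(r)\ge Q$. The key counting estimate is
\[
\#\{r\in\mathcal{R} : h(r)\le H\}\ll H^{s+\varepsilon}.
\]
To justify it, note that a rational with denominator dividing $b^k(b^\ell-1)$ is determined by its first $k+\ell$ symbols, so there are at most $m^{k+\ell}$ of them. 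The height satisfies $h(r)\ge b^{\kappa}$ up to constants, where $\kappa$ is roughly the minimal preperiod-plus-period length. Under the empty interior assumption I expect this to hold with $m^{\kappa}\approx b^{\kappa s}$ for the minimal representation, after possibly passing to an open-set-condition reduction. This is the weak point of the upper bound, because $h(r)$ can be much smaller than $b^{k}(b^\ell-1)$. I would handle it by grouping rationals according to their exact height $q$ and bounding the number of points of $K_T$ in $\frac1q\mathbb{Z}$ by $\ll q^{s+\varepsilon}$. This is the classical estimate for rationals in missing-digit sets with fixed denominator. With that in hand, the sum $\sum_{q\ge Q} q^{s+\varepsilon}q^{-\delta t}$ converges for $t>(s+\varepsilon)/\delta$, which gives $\dim_H E_{\psi,T,N}\le s/\delta$ (the $q\ge Q$ tail is summed dyadically in $q$). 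Letting $\delta\to\delta_\psi$ finishes the upper bound.

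\textbf{Lower bound.} I will construct a sufficiently dense family of rationals whose heights lie in $\mathcal{P}_N$. Here the condition on $N$ enters. In case (2), suppose $p_0/b^{k_0}\in K_T$. Then $p_0/b^{k_0}$ has an eventually periodic coding. Substituting it as a tail after any word $w$ of length $n$ gives the points $f_w(p_0/b^{k_0})$, whose denominators divide $b^{n+k_0}$. These heights lie in $\mathcal{P}_{\omega(b)}$, and there are $m^n\approx (b^n)^{s}$ of them, spread $b^{-n}$-uniformly over $K_T$, one in each cylinder. In case (1), I instead take a fixed periodic point $r_0$ of $T$. Its height divides $b^{k}(b^\ell-1)$ for a fixed $\ell$, and I expect to arrange $\ell=1$, i.e.\ a fixed point of some $f_i$, with denominator dividing $b-1$. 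The points $f_w(r_0)$ then have height dividing $b^n(b-1)$, so $\omega\le\omega(b)+\omega(b-1)$. In either case the height is $q_n\asymp b^n$, up to a constant factor.

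So for each $n$ I get a family $\mathcal{R}_n$ of about $b^{ns}$ points of $\mathcal{R}$, one per level-$n$ cylinder, each with $h(r)\le Cb^n$. This forms a uniformly distributed system with respect to the natural self-similar measure $\mu$, which has $\mu(\text{cylinder})=m^{-n}=b^{-ns}$. Because $\psi$ is non-increasing, $\psi(h(r))\ge\psi(Cb^n)$. Choosing a sequence $n_j$ along which $\psi(Cb^{n_j})\ge b^{-n_j(\delta_\psi+\varepsilon)}$, the set $E_{\psi,T,N}$ contains the limsup of the balls $B(r,b^{-n_j(\delta_\psi+\varepsilon)})$ with $r\in\mathcal{R}_{n_j}$. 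The full balls $B(r,b^{-n_j})$ cover $K_T$ and satisfy the ubiquity condition with respect to $\mu$. Applying the mass transference principle for self-similar measures (the version for Ahlfors-regular $\mu$, which holds here when the open set condition holds) yields dimension at least $s/(\delta_\psi+\varepsilon)$. The case $\delta_\psi\le 1$ is handled by the same construction.

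\textbf{Main obstacles.} One obstacle is ensuring that a fixed point with denominator dividing $b-1$ exists. A fixed point of $f_i$ is $p_i/(b-1)$, which always lies in $K_T$, so this is in fact immediate. The other obstacle is the overlap issue: $K_T$ might fail the open set condition, so that $\mu$ is not Ahlfors regular. For rational homogeneous IFSs with empty interior, I would handle this by replacing $T$ with an iterate and a sub-IFS satisfying the strong separation condition with dimension arbitrarily close to $s$. The sub-IFS still uses the same base $b^k$, so $\omega(b^k)=\omega(b)$. However, $\omega(b^k-1)$ can grow, so for case (1) I must pick the periodic point as a fixed point of a single map of the original $T$ that I keep in the sub-IFS. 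The resulting denominators then still divide $b^{n}(b-1)$.
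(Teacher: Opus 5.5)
Your lower bound follows the paper's argument. The paper also uses the images $f_{\underline{i}}(p/b^k)$ for item (2) and $f_{\underline{i}}(p_1/(b-1))$ for item (1), whose heights divide $b^{n+k}$ and $b^n(b-1)$ respectively, then monotonicity of $\psi$ and a mass transference principle for self-similar measures. Your passage to a separated sub-IFS is harmless but unnecessary, since $K_T$ is Ahlfors regular by Proposition~\ref{WSCAlfh}. The upper bound, however, has a genuine gap.

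\textbf{The gap in the upper bound.} Everything rests on the count $\#\{r\in\mathcal{R}: h(r)\le H\}\ll H^{s+\varepsilon}$, and neither of your justifications proves it.
\begin{itemize}
\item The coding count fails for the reason you name: $h(r)$ can be a small divisor of $b^k(b^\ell-1)$, so codings of length about $\log_b H$ do not capture all rationals of height at most $H$.
\item The per-denominator route fails arithmetically. Summing $\#(K_T\cap\frac1q\mathbb{Z})\ll q^{s+\varepsilon}$ over all $q\le H$ gives $H^{1+s+\varepsilon}$. Likewise, $\sum_{q\ge Q}q^{s+\varepsilon-\delta t}$ converges only for $t>(1+s+\varepsilon)/\delta$, not for $t>(s+\varepsilon)/\delta$. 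So you only get $\dim_H E_{\psi,T,N}\le (1+s)/\delta_\psi$.
\end{itemize}
A sanity check shows something essential is missing: your upper-bound argument uses neither the empty-interior hypothesis nor $h(r)\in\mathcal{P}_N$. Take $K_T=[0,1]$ (say $b=2$, $p_1=0$, $p_2=1$). The per-denominator bound still holds there, yet the rationals in $[0,1]$ with prime height at most $H$ already number $\gg H^2/\log H$, far more than $H^{1+\varepsilon}$. Without the restriction to $\mathcal{P}_N$, the count you want is in substance what the paper can only obtain conditionally on Conjecture~\ref{ConjectureO} (Theorem~\ref{MahlConjO}).

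\textbf{The missing idea.} Very few denominators occur at all: $\#\{q\le H:\ q\in\mathcal{P}_N,\ \exists\, p\wedge q=1,\ p/q\in K_T\}=H^{o(1)}$. Combined with your per-denominator bound, this gives $\ll b^{n(s+2\varepsilon)}$ admissible rationals with height in $(b^{n-1},b^n]$. Your covering argument then does yield $s/\delta_\psi$. The paper proves this count in two steps.
\begin{itemize}
\item First step (Theorem~\ref{MainThMahl}), which uses the empty interior. The set $K_T \bmod 1$ is $\times b$-invariant and misses an interval $I$ of length $\ell>0$. If $p/q\in K_T$ with $q\wedge b=1$, the whole orbit $\{b^kp/q\}_{0\le k<O_q(b)}$ avoids $I$. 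On the other hand, the Erd\H{o}s--Tur\'an inequality with about $1/\ell$ frequencies, together with Bourgain's bound for exponential sums over the subgroup generated by $b$ in $\mathbb{Z}_{q'}^*$ (where $q'=q/(q\wedge h)$ and $h\lesssim 1/\ell$), would make this orbit $\ell/2$-equidistributed. The only escape is $O_q(b)\le q^{C'/\sqrt{\log\log\log q}}$.
\item Second step (Theorems~\ref{ConjOtildebis} and~\ref{ConjOtilde}), which uses $h(r)\in\mathcal{P}_N$. Among $q\le x$ with $\omega(q)\le N$, only $x^{o(1)}$ have such small order. This follows from $\widetilde O_q(b)\le O_q(b)^{\omega(q)}$ and the convergence of $\sum_{n\wedge b=1}\widetilde O_n(b)^{-t}$ for $t>1$.
\end{itemize}
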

Note the last item holds as soon as there exists $1\leq i\leq m$ such that $p_i =0\text{ or }b-1.$ In addition, when $b$ is a power of a prime, $\omega(b)=1.$ Hence, for the middle third Cantor set, item $(1)$ holds for every $N\geq 1.$
\begin{remark}
\label{Remequi}
For a self-similar IFS $T$ of the form $T=\Big\{f_i(\cdot)=\frac{\cdot}{b}+\frac{p_i}{b}\Big\}_{1\leq i\leq m}$, writing $K_T$ its attractor, the three following are equivalent:
\begin{itemize}
\item[(1)] $\mathcal{L}(K_T)>0,$
\medskip
\item[(2)] $K_T$ has non empty interior,
\medskip
\item[(3)] $\dim_H K_T =1.$
\end{itemize}
We refer to the last subsection of the appendix for a proof of this remark.
\end{remark}
In the above formula, $\frac{\dim_H K_T}{\max\left\{1, \delta_{\psi}\right\}}=\min\left\{\dim_H K_T ,\frac{\dim_H K_T}{\delta_{\psi}}\right\}.$
Moreover, it is a routine check to verify that $$\frac{\dim_H K_T}{\delta_{\psi}}=\inf\left\{s\geq 0 : \ \sum_{n\geq 1}e^{n \dim_H K_T}\psi\Big(e^{n}\Big)^s <+\infty\right\}.$$
Denote $\phi$ is the Euler mapping, that is $$\phi(\cdot)=\#\left\{t \leq n, \ t\wedge n =1\right\}$$
and notice that  for any $n\in\mathcal{P}_N,$
 $$\phi(n)\geq n \prod_{i=2}^{N+1}(1-\frac{1}{i})=C_{N}n.$$
In contrast with Theorem \ref{ThmMahlerConj},  when $K_T$ has non empty interior,  for any non-increasing $\psi:\mathbb{N}\to \mathbb{R}_+,$ a direct application of \cite{Maynkou} and \cite{BV}   yields 
$$\dim_H E_{\psi, T,N}=\min\left\{1,s_{\psi}\right\},$$
where $$s_{\psi}=\inf\left\{s\geq 0 : \ \sum_{n\in \mathcal{P}_N}n\psi(n)^s<+\infty\right\}.$$

To illustrate this striking dichotomy, we state the following corollary regarding the intrinsic Jarnik sets.
\begin{corollary}
Let $b\geq 2$  and $p_1,....,p_m \in\mathbb{Z}$ be integers and set $$T=\left\{f_i(\cdot)=\frac{\cdot}{b}+\frac{p_i}{b}\right\}_{1\leq i\leq m}.$$ Let $K_T$ be the attractor of $T$, $\delta \geq 1$ and  $\psi_{\delta}:q\mapsto q^{-\delta}.$ Then, for any $N \geq \omega(b)+\omega(b-1),$\footnote{ In the case where $K_T$ has non empty interior, the formula for $\dim_H E_{\psi_{\delta},T,N}$ is easily obtained from the expression of $s_{\psi}$ together with the fact that, for every large enough $x\in\mathbb{N},$ $$\#\left\{n\leq x : \ \omega(n)\leq N\right\}\geq \#\left\{n \leq x : n \text{ is prime}\right\}\geq \frac{x}{2\log x}.$$}
\begin{equation*}
\dim_H E_{\psi_\delta , T,N}=\begin{cases}\frac{\dim_H K_T}{\delta} \ \ \ \ \ \ \ \  \ \ \ \ \ \ \ \ \ \ \ \text{ if }K_T \text{ has empty interior } \\ \frac{2}{\max\left\{2,\delta \right\}}=\frac{2\dim_H K_T}{\max\left\{2,\delta\right\}} \ \ \ \ \  \text{ if }K_T \text{ has non empty interior.} \end{cases}
\end{equation*}

\end{corollary}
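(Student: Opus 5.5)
The proof splits into the two cases of the display, and in each case I only need to specialise results already available to $\psi_\delta(q)=q^{-\delta}$.

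\emph{Empty interior.} I apply Theorem \ref{ThmMahlerConj}(1) directly; it holds for every $N\geq \omega(b)+\omega(b-1)$. So the only task is to compute $\delta_{\psi_\delta}$. From \eqref{DefDeltaPsi},
\[
\delta_{\psi_\delta}=\liminf_{q\to+\infty}\frac{\delta\log q}{\log q}=\delta .
\]
Since $\delta\geq 1$, we have $\max\{1,\delta\}=\delta$, which gives $\dim_H E_{\psi_\delta,T,N}=\dim_H K_T/\delta$.

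\emph{Non-empty interior.} By Remark \ref{Remequi}, $\dim_H K_T=1$, so the two expressions $\frac{2}{\max\{2,\delta\}}$ and $\frac{2\dim_H K_T}{\max\{2,\delta\}}$ coincide. I use the formula quoted above from \cite{Maynkou} and \cite{BV}, namely $\dim_H E_{\psi,T,N}=\min\{1,s_\psi\}$, and it remains to show $s_{\psi_\delta}=2/\delta$. For the upper bound, fix $s>2/\delta$. Then
\[
\sum_{n\in\mathcal{P}_N} n\,n^{-\delta s}\leq \sum_{n\geq 1} n^{1-\delta s}<+\infty,
\]
because $1-\delta s<-1$. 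Hence $s_{\psi_\delta}\leq 2/\delta$.

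The lower bound is the only step needing an input, and I take it from the counting estimate in the footnote: for large $x$, $\#\{n\leq x:\omega(n)\leq N\}\geq x/(2\log x)$, since primes lie in $\mathcal{P}_N$. Fix $s<2/\delta$ and group the sum dyadically. The block $n\in(2^{k},2^{k+1}]$ contains at least $c\,2^k/k$ elements of $\mathcal{P}_N$ for large $k$; to get this I would apply the footnote's estimate to primes, using the prime number theorem or Bertrand-type bounds. Each such $n$ contributes at least $2^{k(1-\delta s)}$ up to constants. The block therefore contributes at least
\[
c\,\frac{2^{k(2-\delta s)}}{k},
\]
which tends to infinity because $2-\delta s>0$. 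So the series diverges and $s_{\psi_\delta}\geq 2/\delta$.

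Combining the two bounds, $s_{\psi_\delta}=2/\delta$, and therefore
\[
\dim_H E_{\psi_\delta,T,N}=\min\Big\{1,\frac{2}{\delta}\Big\}=\frac{2}{\max\{2,\delta\}}.
\]
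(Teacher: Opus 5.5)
Your proof is correct and follows the paper's route: Theorem \ref{ThmMahlerConj}(1) with $\delta_{\psi_\delta}=\delta$ in the empty-interior case, and the formula $\min\{1,s_\psi\}$ with $s_{\psi_\delta}=2/\delta$ (divergence coming from the primes in $\mathcal{P}_N$, as in the footnote) in the non-empty-interior case. One small remark: your dyadic-block count needs the prime number theorem, since bare Bertrand is too weak when $1<\delta s<2$, whereas the footnote's cumulative bound already suffices, because for $1\leq\delta s<2$ one has $\sum_{n\leq x,\,n\in\mathcal{P}_N} n^{1-\delta s}\geq x^{1-\delta s}\cdot\frac{x}{2\log x}\to+\infty$ (and for $\delta s<1$ every term is at least $1$).
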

Regarding the original problem, we make a connection with the validity of the dimension formula  proposed in \eqref{ConjJArIntr} and a natural  conjecture in number theory. Given $q,b\in\mathbb{N}$ such that $q\wedge b=1,$ we write $O_q(b)$ the order of $b$ in $\mathbb{Z}_q ^*,$ that is, $$O_q(b)=\min\left\{k\geq 1 : \ b^k =1 \mod q \right\}.$$
\begin{conjecture}
\label{ConjectureO}
Let $b\geq 2$ be an integer and let $f:\mathbb{N}\to \mathbb{R}_+$ be a non-increasing mapping such that $f(q)\to 0.$ Then $$\lim_{x\to +\infty}\frac{\log\#\left\{1 \leq q \leq x : \ q\wedge b =1, \ O_q(b)\leq q^{f(q)}\right\}}{\log x}=0.$$
\end{conjecture}
\begin{theoreme}
\label{MahlConjO}
Let $b\geq 2$ and  $p_1,\dots,p_m \in\mathbb{Z}$ be integers and define
\[
T=\Big\{f_i(\cdot)=\frac{\cdot}{b}+\frac{p_i}{b}\Big\}_{1\leq i\leq m}.
\]
Let $K_T$ be the attractor of $T$.  For any non-increasing $\psi:\mathbb{N}\to \mathbb{R}_+$, write
$$E_{\psi, T}=\Big\{x\in K_T : \ |x-r|\leq \psi(h(r))\ \text{f.i.m. } r\in \mathbb{Q}\cap K_T\Big\}.$$
Assume that Conjecture \ref{ConjectureO} holds true and that  $K_T$ has empty interior, then \[
\dim_H E_{\psi, T}=\frac{\dim_H K_T}{\max\{1,\delta_{\psi}\}},
\]
where $\delta_{\psi}$ is defined as in \eqref{DefDeltaPsi}.
\end{theoreme}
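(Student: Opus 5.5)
Since $E_{\psi,T,N}\subset E_{\psi,T}$, Theorem \ref{ThmMahlerConj} already gives the lower bound $\dim_H E_{\psi,T}\geq \frac{\dim_H K_T}{\max\{1,\delta_\psi\}}$ (take $N=\omega(b)+\omega(b-1)$). So the plan is to prove only the upper bound, assuming Conjecture \ref{ConjectureO}. The trivial bound $\dim_H E_{\psi,T}\leq \dim_H K_T$ handles $\delta_\psi\leq 1$, so assume $\delta_\psi>1$. Fix $1<\delta<\delta_\psi$; then $\psi(q)\leq q^{-\delta}$ for large $q$.

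\textbf{Structure of rationals in $K_T$.} Write $r=p/q\in\mathbb{Q}\cap K_T$ with $q=h(r)$ and factor $q=q_1q_2$, where $q_1$ is composed of primes dividing $b$ and $q_2\wedge b=1$. The $b$-adic expansion of $r$, with digits coming from the IFS coding, is eventually periodic. It has a preperiod of length $\ell\lesssim \log q_1/\log b + O(1)$ and a period $k=O_{q_2}(b)$. Hence $r$ is determined by a word of length $\ell$ and a word of length $k$ of the IFS alphabet. Moreover $r=f_{w}(\text{fixed point of } f_{v})$ with $|w|=\ell$ and $|v|=k$.

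\textbf{Counting.} Counting words, the number of such $r$ with preperiod $\ell$ and period $k$ is at most $m^{\ell+k}$. Since $K_T$ has empty interior, Remark \ref{Remequi} gives $\dim_H K_T=\log m/\log b<1$ (the lattice structure of the homogeneous IFS prevents overlaps from inflating the count). For the upper bound, cover $E_{\psi,T}$ by the intervals $B(r,\psi(h(r)))$ for $h(r)\geq Q$.

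Split the rationals with $h(r)=q\in[2^j,2^{j+1})$ according to whether $O_{q_2}(b)\leq q^{\varepsilon}$.

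\emph{Good $q$}, where $O_{q_2}(b)> q^{\varepsilon}$ fails, i.e. large period. Here we do not count by denominators. Instead, group rationals by the pair $(w,v)$: each $r$ has height at least $\gtrsim b^{\ell}(b^{k}-1)/\mathrm{gcd}$. This is the delicate point, since cancellation can lower the height. Dealing with it requires bounding the height from below by $b^{\max(\ell,k)(1-o(1))}$, which I'd try via the fact that $q_2\mid b^k-1$ with $k$ minimal and $q_1$ controlling $\ell$. Since $k=O_{q_2}(b)\leq \log q_2/\log b\cdot(\text{something})$, the minimality of $k$ forces $b^k\leq q_2^{O(1)}$ only if the order is large. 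A cleaner route is to reverse the bookkeeping: for each length $n$, the rationals with $\ell+k\leq n$ number at most $nm^{n}$. Those whose height $q$ satisfies $k\leq q^{\varepsilon}$ fail to be good, so for good $q$ we get $k\geq \varepsilon\log_b q$...

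I'd instead organize the count directly as follows. The number of $r\in K_T$ with $h(r)\approx 2^j$ and $O_{q_2}(b)\geq q^\varepsilon$ is bounded using $\ell+k\le \log_b q_1+k$. This needs $k\lesssim \log_b q$, which is false in general, so the key observation is the converse. For any $r$ of height $q$ one has $q\leq b^{\ell}(b^k-1)$, so $b^{\ell+k}\geq q$. Conversely, the set of rationals whose codes have $\ell+k=n$ has size $\leq nm^n$ and heights $\leq b^n$. The sum
\[
\sum_n n\,m^n\, \psi(h)^s
\]
needs heights $h\geq b^{n(1-o(1))}$, which is exactly what Conjecture \ref{ConjectureO} is used to guarantee outside a thin set.

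\emph{Bad $q$}, where $O_{q_2}(b)\leq q^{f(q)}$. Choose $f\to0$ slowly, using the Conjecture with arbitrary $f$ (via a diagonal argument this gives subpolynomially many bad $q\leq x$). Each bad $q$ contributes at most $q$ rationals, each covered by an interval of length $q^{-\delta}$. Thus
\[
\sum_{q \text{ bad}} q\cdot q^{-\delta s}<\infty \quad\text{whenever } s>\frac{o(1)+1}{\delta}.
\]
To get the sharper exponent $\dim_H K_T/\delta$, use the number of rationals of height $q$ in $K_T$ instead. This is at most $m^{\ell+k}\leq m^{\log_b q_1+q^{f(q)}+O(1)}$, and the bad-$q$ count $x^{o(1)}$ then yields convergence for $s>\dim_H K_T/\delta$.

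The \textbf{main obstacle} is the good-$q$ regime: showing that rationals whose period is long still satisfy an estimate $\#\{r\in K_T:h(r)\leq x\}\leq x^{\dim_H K_T+o(1)}$. I would try to derive this from the fact that $r\in K_T$ with period $k$ is the fixed point of a length-$k$ composition. Its denominator divides $b^\ell(b^k-1)$, so $h(r)\leq b^{\ell+k}$, while the number of such $r$ is $\le m^{\ell+k}$. Therefore $\#\{r:h(r)\le x\}\le \#\{(\ell,k):\ b^{\ell+k}\lesssim x\cdot(\text{loss})\}$, where the loss is $b^{\ell+k}/h(r)$.

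I'd use the conjecture to show that this loss is $x^{o(1)}$ for all but $x^{o(1)}$ values of $q$. Large loss means $h(r)$ is much smaller than $b^{\ell+k}$, i.e. $k=O_{q_2}(b)$ is large relative to $\log q$. A preliminary bound $k\leq q_2$ suffices for bad-type control, so that case must be separated. Once $\#\{r:h(r)\le x\}\le x^{\dim_H K_T+o(1)}$ is established, the standard Hausdorff–Cantelli covering with radii $\psi(h(r))\le h(r)^{-\delta}$ gives $\dim_H E_{\psi,T}\le \dim_H K_T/\delta$. Letting $\delta\uparrow\delta_\psi$ finishes the proof.
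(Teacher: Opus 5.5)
Your lower bound (from $E_{\psi,T,N}\subset E_{\psi,T}$ and Theorem \ref{ThmMahlerConj}) is fine. The intermediate target $\#\{r\in\mathbb{Q}\cap K_T:\ h(r)\le x\}\le x^{\dim_H K_T+o(1)}$ is also the right one. But the upper bound has a genuine gap: you never prove this count, and the route you sketch cannot prove it.

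Conjecture \ref{ConjectureO} only says that heights with \emph{small} order $O_{q_2}(b)\le q_2^{f(q_2)}$ are rare. For every other height, a priori, any eventually periodic coding has period $k\ge O_{q_2}(b)>q_2^{f(q_2)}$, and the order can be as large as $q_2-1$. This is in general far beyond $\log_b q$, so $b^{\ell+k}/h(r)$ is huge and counting codings is useless. Hoping the conjecture makes this loss $x^{o(1)}$ for most $q$ is backwards, since large loss is the generic case.

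The missing idea is that these heights \emph{do not occur at all} for large $q$. This is exactly where empty interior must be used. In your sketch it plays no role in the counting, yet the count is false without it: an interval contains $\asymp x^2$ rationals of height $\le x$. The paper proves this as Theorem \ref{MainThMahl}:
\begin{itemize}
\item $\widetilde K_T=K_T \bmod 1$ is closed, $\times b$-invariant, and misses an interval $I$.
\item If $p/q\in\widetilde K_T$ with $p\wedge q=1=q\wedge b$, the orbit $\{b^jp/q\}_{0\le j<O_q(b)}$ stays in $\widetilde K_T$.
\item If the order of $b$ modulo $q$, and modulo the boundedly many $q/(q\wedge h)$ with $h\le 2C/|I|$, exceeds $q^{C/\sqrt{\log\log\log q}}$, then Bourgain's bound (Theorem \ref{ThmBourg}) plus the Erd\H{o}s--Tur\'an inequality give discrepancy $<|I|$. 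So the orbit meets $I$, a contradiction.
\end{itemize}
Writing $q=q_1q_2$ with $q_1$ $b$-smooth, and replacing $r$ by $b^jr \bmod 1$ with $q_1\mid b^j$, reduces general heights to this case. Only then do Conjecture \ref{ConjectureO} (with $f(q)=3C/\sqrt{\log\log\log q}$) and the $(\log x)^{O(1)}$ count of $b$-smooth numbers give $x^{o(1)}$ admissible heights up to $x$ (Corollary \ref{MainCoroMahl}).

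Two further steps fail as written. First, in your ``bad'' regime the bound $m^{\ell+k}\le m^{\log_b q_1+q^{f(q)}+O(1)}$ is super-polynomial in $q$, since $q^{f(q)}\gg\log q$ for slowly decaying $f$. So it does not yield the exponent $\dim_H K_T/\delta$. The paper avoids codings entirely here. For fixed $q$, the points $p/q\in K_T$ are $1/q$-separated, so there are at most $(2q)^{\overline{\dim}_B K_T+\varepsilon}$ of them. Combined with the $b^{o(n)}$ heights in $(b^{n-1},b^n]$, the Hausdorff--Cantelli sum converges for $s>\dim_H K_T/\delta$.

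Second, $\dim_H K_T=\log m/\log b$ is false in general. Homogeneous rational IFSs satisfy the WSC but not necessarily the OSC, and exact overlaps occur. For example, with $b=3$ and $(p_1,p_2,p_3)=(0,1,3)$ one has $f_1\circ f_3=f_2\circ f_1$, and $\dim_H K_T<1=\log m/\log b$. So $m^n$ overcounts, and you would need the number of distinct maps $f_w$ with $|w|=n$ instead.
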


If true, Conjecture \ref{ConjectureO} is probably difficult to prove. Nonetheless, we motivate this conjecture by proving a weaker version of it (which we use in the proof of Theorem \ref{ThmMahlerConj}).

\begin{theoreme}
\label{ConjOtilde}
Let $b\geq 2$ be an integer and let $f:\mathbb{N}\to \mathbb{R}_+$ be a non increasing mapping with $f(q)\to 0$. Then, for any $N\in\mathbb{N},$
 $$\lim_{x\to +\infty}\frac{\log\#\left\{1 \leq q \leq x : \ q\wedge b =1,\ \omega(q)\leq N \ O_{q}(b)\leq q^{f(q)}\right\}}{\log x}=0.$$
\end{theoreme}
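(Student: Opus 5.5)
The approach is to bound, for each fixed order $k$, the number of admissible $q$ with $O_q(b)=k$ polynomially in $k$ (with degree depending only on $N$), and then sum over $k\le x^{o(1)}$. The point is that the restriction $\omega(q)\le N$ makes this count polynomial in $k$. Without it, $b^k-1$ can have about $\exp(ck/\log k)$ divisors, and that is what blocks Conjecture \ref{ConjectureO}.

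\emph{Step 1 (counting for fixed $k$).} If $q\wedge b=1$ and $O_q(b)=k$, then $q\mid b^k-1$. Since every prime factor of $b^k-1$ is at least $2$, $b^k-1$ has at most $k\log_2 b$ distinct prime factors. For each prime $p$, the valuation satisfies $v_p(b^k-1)\le k\log_2 b$. A divisor $q$ of $b^k-1$ with $\omega(q)\le N$ is determined by a choice of at most $N$ primes among these and an exponent $1\le a\le k\log_2 b$ for each. Hence
$$\#\{q: q\mid b^k-1,\ \omega(q)\le N\}\le \sum_{j\le N}(k\log_2 b)^{j}(k\log_2 b)^{j}\le C_{N,b}\,k^{2N}.$$

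\emph{Step 2 (splitting the range of $q$).} Fix $\eta\in(0,1)$. The $q\le x^{\eta}$ contribute at most $x^{\eta}$ in total. For $x^\eta<q\le x$, monotonicity of $f$ gives $O_q(b)\le q^{f(q)}\le x^{f(x^{\eta})}=:K_x$. Summing Step 1 over $k\le K_x$ bounds this part by $C_{N,b}K_x^{2N+1}$. Therefore
$$\frac{\log\#\{\cdots\}}{\log x}\le \frac{\log\big(x^{\eta}+C_{N,b}\,x^{(2N+1)f(x^{\eta})}\big)}{\log x}\le \eta+(2N+1)f(x^{\eta})+o(1).$$

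\emph{Step 3 (limits).} Since $f(q)\to0$, we have $f(x^\eta)\to0$ as $x\to\infty$ for fixed $\eta$, so the $\limsup$ of the ratio is at most $\eta$. Letting $\eta\to0$ gives the claim; the ratio is nonnegative as soon as the set is nonempty (e.g.\ it contains $q=1$).

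The only genuine input is Step 1, namely the bound on the number of prime factors and on the valuations of $b^k-1$. This is elementary, so I expect no real obstacle. The care needed is in handling the $q$-dependence of the threshold $q^{f(q)}$, and the split at $x^\eta$ in Step 2 takes care of it.
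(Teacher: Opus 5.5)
Your proof is correct, and it takes a genuinely different and more elementary route than the paper. You count $q$ according to the value $k=O_q(b)$: such $q$ divide $b^k-1$, the condition $\omega(q)\le N$ caps the number of admissible divisors at $C_{N,b}k^{2N}$, and the split at $x^{\eta}$ handles the $q$-dependence of the threshold.

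The paper works differently. It introduces $\widetilde{O}_q(b)=\prod_i O_{p_i^{\alpha_i}}(b)$ and proves a lifting lemma for $O_{p^t}(b)$. It then runs a Rankin-type argument with $S_t=\sum_{n\wedge b=1}\widetilde{O}_n(b)^{-t}=\prod_p\sum_{i\ge 0}O_{p^i}(b)^{-t}$. This bounds $\#\{q\le x:\widetilde{O}_q(b)\le q^{\varepsilon}\}$ by $x^{O(\varepsilon)}$ with no restriction on $\omega(q)$. The hypothesis $\omega(q)\le N$ enters only at the end, via $\widetilde{O}_q(b)\le O_q(b)^{\omega(q)}$. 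That intermediate statement has some independent interest. Your argument, by contrast, is self-contained and gives the explicit bound $x^{\eta}+C_{N,b}\,x^{(2N+1)f(x^{\eta})}$.

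Your route also sidesteps a slip in the paper. Write $k_p=O_p(b)$ and $u_p=v_p(b^{k_p}-1)$. The local factor of $S_t$ at $p$ is $1+u_pk_p^{-t}+\cdots$, because the $i=0$ term equals $1$, not $k_p^{-t}$ as the paper writes. So convergence of $S_t$ requires $\sum_p u_pk_p^{-t}<\infty$. Grouping primes by $k_p=k$ and using $\sum_{p:\,k_p=k}u_p\le k\log_2 b$ establishes this for $t>2$. The range $t>1$ claimed in the paper, which is what its stated bound $x^{2\varepsilon}$ needs, is not justified. The paper's argument survives with $t>2$, giving $x^{t\varepsilon}$ in place of $x^{2\varepsilon}$, but your direct counting needs none of this.
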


The second purpose of this article is to provide more general results for inhomogeneous  rational IFSs. Given integers $m\geq 2$, $q_1,\dots,q_m\geq 2$ and $p_1,\dots,p_m\in\mathbb{Z}$, for $1\leq i\leq m$ define, for any $x\in\mathbb{R},$
\begin{equation}\label{Deffi}
f_i(x)=\frac{x}{q_i}+\frac{p_i}{q_i},
\end{equation}
let $T=\{f_1,\dots,f_m\}$ and denote by $K_T$ its attractor. We recall the following result, proved, which is due to Fishman and Simmons and Baker. 

\begin{proposition}[\cite{FishSim, Baker}]
\label{PropFSB}
Let $r\in\mathbb{Q}$. Then $r\in K_T$ if and only if there exist $\underline{i}=(i_1,\dots,i_n)\in \bigcup_{p\geq 1}\{1,\dots,m\}^p$ and $0\leq k\leq n-1$ such that
\[
r=\pi\Big((i_1,\dots,i_n,(i_{k+1},\dots,i_n)_\infty)\Big)=\pi\Big(i_1,\dots,i_n,i_{k+1},\dots,i_n,i_{k+1},...,i_n,...\Big),
\]
where $\pi$ is the canonical projection associated with $T$, define for any $(j_n)_{n\in\mathbb{N}}\in\left\{1,...,m\right\}^\mathbb{N}$ by $$\pi\Big((j_n)_{n\in\mathbb{N}}\Big)=\lim_{n\to +\infty}f_{j_1}\circ...\circ f_{j_n}(0). $$
\end{proposition}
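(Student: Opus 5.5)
The plan is to prove both implications directly, using two standard facts about the coding map: $\pi(i w)=f_i(\pi(w))$ for every $i\in\{1,\dots,m\}$ and every infinite word $w$, and $K_T=\pi(\{1,\dots,m\}^{\mathbb{N}})$. Both follow from the definition of $\pi$ as a limit together with the uniform contraction of the maps $f_i$.

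\emph{Sufficiency.} Suppose $r=\pi\big(i_1,\dots,i_n,(i_{k+1},\dots,i_n)_\infty\big)$. Put $g=f_{i_{k+1}}\circ\dots\circ f_{i_n}$ and $w=(i_{k+1},\dots,i_n)_\infty$.
\begin{itemize}
\item Iterating $\pi(iw)=f_i(\pi(w))$ gives $\pi(w)=g(\pi(w))$.
\item $g$ is an affine contraction $x\mapsto x/Q+P/Q$ with $Q=q_{i_{k+1}}\cdots q_{i_n}\geq 2$ and $P\in\mathbb{Z}$. Its unique fixed point is $P/(Q-1)\in\mathbb{Q}$, so $\pi(w)=P/(Q-1)$.
\item Then $r=f_{i_1}\circ\dots\circ f_{i_k}\big(\pi(w)\big)$. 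Each $f_i$ has rational coefficients, so $r\in\mathbb{Q}$.
\item Finally $r\in K_T$, since $r$ is the image under $\pi$ of an infinite word.
\end{itemize}

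\emph{Necessity.} Let $r\in\mathbb{Q}\cap K_T$ and write $r=a/q$ in lowest terms. Choose a word $(j_\ell)_{\ell\geq 1}$ with $\pi((j_\ell))=r$, and set $r_\ell=\pi(j_{\ell+1},j_{\ell+2},\dots)\in K_T$, with $r_0=r$.
\begin{itemize}
\item By the relation $\pi(iw)=f_i(\pi(w))$, we have $r_{\ell-1}=f_{j_\ell}(r_\ell)$. Equivalently, $r_\ell=q_{j_\ell}r_{\ell-1}-p_{j_\ell}$.
\item The maps $x\mapsto q_i x-p_i$ have integer coefficients. By induction, every $r_\ell$ lies in $\frac{1}{q}\mathbb{Z}$.
\item All $r_\ell$ lie in the bounded set $K_T$. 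Hence they lie in the finite set $\frac1q\mathbb{Z}\cap[\min K_T,\max K_T]$.
\item By pigeonhole, there exist $0\leq k<n$ with $r_k=r_n$.
\end{itemize}

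Now put $g=f_{j_{k+1}}\circ\dots\circ f_{j_n}$. Then $r_k=g(r_n)=g(r_k)$, so $r_k$ is the unique fixed point of the contraction $g$. By the sufficiency computation, that fixed point is $\pi\big((j_{k+1},\dots,j_n)_\infty\big)$. Consequently
\[
r=f_{j_1}\circ\dots\circ f_{j_k}(r_k)=\pi\big(j_1,\dots,j_k,(j_{k+1},\dots,j_n)_\infty\big).
\]
This infinite word is the same as $\big(j_1,\dots,j_n,(j_{k+1},\dots,j_n)_\infty\big)$. Taking $(i_1,\dots,i_n)=(j_1,\dots,j_n)$ with $0\leq k\leq n-1$ gives exactly the form in the statement.

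There is no serious obstacle. The only point needing care is the bookkeeping that all the $r_\ell$ have denominators dividing the fixed $q$. This holds precisely because the inverse maps $f_i^{-1}$ have integer coefficients, and it is where the rationality of the IFS is genuinely used.
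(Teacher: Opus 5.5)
Your proof is correct and follows the paper's argument. The paper's proof also applies the inverse maps $f_i^{-1}(x)=q_ix-p_i$ along a coding of $r$, keeping the orbit in the finite set $\frac{1}{q}\mathbb{Z}\cap K_T$, finds a repetition $r_k=r_n$ by pigeonhole, and identifies $r_k$ as the fixed point $\pi\big((j_{k+1},\dots,j_n)_\infty\big)$ of $f_{j_{k+1}}\circ\dots\circ f_{j_n}$. You go slightly further than the paper in two places: you write out the easy direction, and you justify that the iterates stay in $K_T$ by expressing them as $\pi$ of shifted codings, both of which the paper leaves implicit.
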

As  Proposition \ref{PropFSB} was only proved in the case where $T$ satisfies the open set condition in \cite{FishSim} and mentioned to hold in \cite{Baker} , for the sake of completeness, Proposition \ref{PropFSB} is proved in the first subsection of the Appendix (see Subsubsection \ref{SubsecRatCod}).

A straightforward computation shows that there exists $p\in\mathbb{Z}$ such that
\[
\pi\Big((i_1,\dots,i_n,(i_{k+1},\dots,i_n)_\infty)\Big)
=
\frac{p}{\prod_{1\leq j\leq k}q_{i_j}\, \big(\prod_{k+1\leq j\leq n}q_{i_j}-1\big)}.
\]

One of the main difficulties in intrinsic Diophantine approximation is that there is, in general, no obvious way to reduce this fraction. Consequently, in~\cite{FishSim,Baker} the authors introduced a natural \emph{intrinsic height} for $r\in\mathbb{Q}\cap K_T$, which is in general larger than the usual height, by setting
\[
h_{\mathrm{int}}(r)
=
\min \Big\{\prod_{1\leq j\leq k}q_{i_j}\, \big(\prod_{k+1\leq j\leq n}q_{i_j}-1\big)\Big\},
\]
where the minimum is taken over all representations of $r$ of the above form. With this definition, the natural set to consider is
\[
E_{\psi,T,\text{int}}
=
\Big\{x\in K_T : \ |x-r|\leq \psi(h_{\mathrm{int}}(r))\ \text{f.i.m. } r\in\mathbb{Q}\cap K_T\Big\}.
\]

In~\cite{TWWTriad}, Tan, Wang and Wu provided a complete metric description of $E_{\psi,T,\text{int}}$ for the middle-third Cantor set, and in~\cite{Baker}, Baker gave conditions under which these sets have full measure with respect to the natural self-similar measure on $K_T$. In the present article, we provide a complete description of the Hausdorff dimension of the sets $E_{\psi,T,\text{int}}$.

\begin{theoreme}
\label{ThmIntrin}
Let $T$ be the IFS defined by~\eqref{Deffi}, $K_T$ its attractor, and let $\psi:\mathbb{N}\to \mathbb{R}_+$ be non-increasing. Then
\[
\dim_H E_{\psi,T,\text{int}}
=
\frac{\dim_H K_T}{\max\Big\{1,\delta_{\psi}\Big\}}.
\]
\end{theoreme}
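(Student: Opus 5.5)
The proof splits into an upper bound and a matching lower bound, both driven by one counting principle. Write $s=\dim_H K_T$. Since $T$ is rational with contraction ratios $1/q_i\le 1/2$, one can check (or it is classical for rational IFS) that $s$ is given by the similarity dimension, $\sum_i q_i^{-s}=1$. In any case it equals the growth exponent of cylinder counts: the number of words $\underline i$ with $\prod_j q_{i_j}\approx Q$ is $Q^{s+o(1)}$. The trivial bound $\dim_H E_{\psi,T,\mathrm{int}}\le s$ handles $\delta_\psi\le 1$, so the substance is the case $\delta_\psi>1$.

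\textbf{Upper bound.} By Proposition \ref{PropFSB}, every $r\in\mathbb{Q}\cap K_T$ with $h_{\mathrm{int}}(r)\le Q$ has a representation $(i_1,\dots,i_n)$, $0\le k\le n-1$, with $\prod_{j\le k}q_{i_j}\bigl(\prod_{k<j\le n}q_{i_j}-1\bigr)\le Q$. In particular $\prod_{j\le n} q_{i_j}\le 2Q$. The number of such words is at most $Q^{s+o(1)}$. For each word, the number of admissible $k$ is $n\le \log_2(2Q)$, so the total count is $\#\{r: h_{\mathrm{int}}(r)\le Q\}\le Q^{s+o(1)}$. Cover $E_{\psi,T,\mathrm{int}}$, for each $M$, by the intervals $B(r,\psi(h_{\mathrm{int}}(r)))$ with $h_{\mathrm{int}}(r)\in[e^{n},e^{n+1})$, $n\ge M$. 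Since $\psi$ is non-increasing, each radius is at most $\psi(e^{n})$. This gives $\mathcal{H}^t$-sums bounded by $\sum_n e^{n(s+\varepsilon)}\psi(e^n)^t$. By the characterization recalled after Remark \ref{Remequi}, this sum converges for $t>s/\delta_\psi+\varepsilon'$, hence $\dim_H\le s/\delta_\psi$. One has to replace the liminf in $\delta_\psi$ by a subsequence argument, but the displayed identity already encodes this.

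\textbf{Lower bound.} Here I would use the mass transference principle from balls to balls (Beresnevich--Velani), applied on $K_T$ with its natural $s$-Ahlfors regular self-similar measure $\mu$. This works only under the open set condition. Without OSC, I would pass to a sub-IFS of $T^{\ell}$ (words of length $\ell$) satisfying the strong separation condition, with dimension $\ge s-\eta$. Since $\sum_i q_i^{-s}=1$ describes the full count, such sub-IFS with dimension tending to $s$ exist, because distinct words of the same length with coinciding images are few. I then need a $\mu$-full limsup statement: the rationals $r_{\underline i}=\pi(\underline i\,(\underline i)_\infty)$, i.e. $k=0$ with $h_{\mathrm{int}}(r)\le \prod q_{i_j}=:Q_{\underline i}$, are the periodic points inside the cylinder $[\underline i]$. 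Hence every $x\in K_T$ lies within $\mathrm{diam}([\underline i])\approx Q_{\underline i}^{-1}$ of $r_{\underline i}$ for its own prefix $\underline i$. So $K_T\subset\limsup B(r,h_{\mathrm{int}}(r)^{-1})$ in the strong sense, namely every point is covered at every level. The mass transference principle then transfers balls of radius $Q^{-1}$ to radius $\psi(Q)\approx Q^{-\delta}$, yielding dimension $\ge s/\delta$.

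\textbf{Main obstacle.} The main obstacle is handling $\delta_\psi$ being a liminf and $\psi$ general. I would pick a sequence $Q_n$ along which $-\log\psi(Q_n)/\log Q_n\to\delta_\psi$, and use only words with $Q_{\underline i}\approx Q_n$. Since the $q_i$ vary, these lengths are not fixed, so I would take words whose product lies in $[Q_n,\max_i q_i\,Q_n]$; these cut sets still cover $K_T$. Monotonicity of $\psi$ then lets me compare $\psi(h_{\mathrm{int}}(r))\ge\psi(Q_{\underline i})$. Finally, the mass transference principle for self-similar measures in the non-OSC setting (via the sub-IFS reduction above) is the technical point I expect to require the most care.
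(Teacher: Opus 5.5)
\textbf{Gap in the upper bound.} Your upper bound rests on a false premise. For IFSs of the form \eqref{Deffi} one does not in general have $\sum_i q_i^{-s}=1$ with $s=\dim_H K_T$, because overlaps (here exact overlaps) can make $\dim_H K_T$ strictly smaller than the similarity dimension $s_{\mathrm{sim}}$.

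For instance, $T=\{\frac{x}{3},\frac{x+2}{3},\frac{x}{9}\}$ has attractor $K_{1/3}$ (since $\frac{x}{9}=f_1\circ f_1$). So $s=\frac{\log 2}{\log 3}\approx 0.63$, while $2\cdot 3^{-t}+9^{-t}=1$ gives $s_{\mathrm{sim}}=\frac{\log(1+\sqrt{2})}{\log 3}\approx 0.80$. Likewise $\{\frac{x}{2},\frac{x+1}{2},\frac{x+2}{2}\}$ has attractor $[0,2]$ but $s_{\mathrm{sim}}=\frac{\log 3}{\log 2}$.

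The number of \emph{words} with $\prod_j q_{i_j}\le Q$ depends only on the $q_i$ and is $Q^{s_{\mathrm{sim}}+o(1)}$. Counting words therefore only gives $\dim_H E_{\psi,T,\mathrm{int}}\le s_{\mathrm{sim}}/\delta_\psi$, which is not the claimed value. What has to be counted is \emph{distinct rationals}.

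The paper does this through the AWSC (Barral--Feng). A ball of radius $\rho$ meets only $\rho^{-o(1)}$ distinct maps $f_{\underline i}$ with $\underline i\in\Lambda_\rho$. Lemma \ref{lemmecountUP} then shows that all words $\underline j$ with $f_{\underline j}=f_{\underline i}$, over all choices of $k$, produce only $|f_{\underline i}(K)|^{-o(1)}$ distinct rationals. Combined with $\dim_B K=\dim_H K$, this yields the count $Q^{s+o(1)}$.

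An elementary alternative also works. The intrinsic denominators $\prod_{j\le k}q_{i_j}(\prod_{k<j\le n}q_{i_j}-1)\le Q$ take at most $(\log_2 Q+2)^{2m}$ values. For each such $D$, the points of $\frac{1}{D}\mathbb{Z}\cap K_T$ are $\frac{1}{D}$-separated, hence at most $D^{\dim_B K_T+o(1)}$ in number.

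With a correct count, the rest of your covering argument goes through. No subsequence is needed there, since $\psi(q)\le q^{-(\delta_\psi-\varepsilon)}$ for all large $q$.

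\textbf{The lower bound.} The same premise invalidates your justification of the lower-bound reduction. Exact coincidences $f_{\underline i}=f_{\underline j}$ are not ``few'' (in the examples above they are exponentially many), so $\sum_i q_i^{-s}=1$ cannot be used to produce an SSC sub-IFS.

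The reduction itself is true, though. Since $\underline{\dim}_B K_T=\dim_H K_T$, a maximal $3\rho|K|$-separated subset of $K_T$ has $\rho^{-s+o(1)}$ points. For each such point $x$, choose a cylinder $f_{\underline i}(K)\ni x$ with $\underline i\in\Lambda_\rho$. These cylinders are pairwise disjoint, and their maps form an SSC IFS whose dimension tends to $s$ as $\rho\to 0$. Fixed points of compositions of these maps lie in the sub-attractor and are rationals of the type you use. Beresnevich--Velani's MTP therefore applies there with the Ahlfors-regular measure. This is a legitimate substitute for the paper's route, which uses a self-similar measure of dimension $\ge\dim_H K-\varepsilon$ with support $K$ from \cite{ED4}, plus the MTP for self-similar measures of \cite{ED3}.

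Finally, put your window below $Q_n$, i.e.\ use words with $Q_{\underline i}\in[Q_n/\max_i q_i,\,Q_n]$. For $Q_{\underline i}\ge Q_n$, monotonicity gives $\psi(Q_{\underline i})\le\psi(Q_n)$, which is the wrong direction.
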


It is worth mentioning that it is not known in general whether $\mathcal{H}^{\dim_H K_T}(K_T)>0$ for IFSs of the form~\eqref{Deffi}. Moreover, this result follows from a more general theorem concerning approximation by ultimately periodic sequences on self-similar fractals (see Theorem \ref{ThmUPAWSC}).

Finally, as for any $r\in\mathbb{Q}\cap K_T,$ $h_{\text{int}}(r)\geq h(r)$, the following corollary holds.

\begin{corollary}
\label{CorollaryLowB}
Let $T$ be a self-similar IFS defined by maps $f_i$ as in~\eqref{Deffi}. Then for any non-increasing $\psi:\mathbb{N}\to \mathbb{R}_+$, 
\begin{align*}
&\dim_H \Big\{x\in K_T : \ |x-\tfrac{p}{q}|\leq\psi(q)\ \text{f.i.m. } \tfrac{p}{q}\in \mathbb{Q}\cap K_T \Big\}\geq \frac{\dim_H K_T}{\max\left\{1,\delta_{\psi}\right\}}.
\end{align*}
\end{corollary}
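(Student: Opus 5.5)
The plan is to deduce Corollary \ref{CorollaryLowB} from Theorem \ref{ThmIntrin} by an inclusion of sets, using only that the intrinsic height dominates the usual height and that $\psi$ is non-increasing. Write
$$F_\psi=\Big\{x\in K_T : \ |x-\tfrac{p}{q}|\leq\psi(q)\ \text{f.i.m. } \tfrac{p}{q}\in \mathbb{Q}\cap K_T \Big\}.$$
The aim is to show $E_{\psi,T,\text{int}}\subset F_\psi$. Monotonicity of Hausdorff dimension together with Theorem \ref{ThmIntrin} then gives $\dim_H F_\psi\geq \dim_H E_{\psi,T,\text{int}}=\frac{\dim_H K_T}{\max\{1,\delta_\psi\}}$.

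For the inclusion, fix $x\in E_{\psi,T,\text{int}}$. There are infinitely many distinct $r\in\mathbb{Q}\cap K_T$ with $|x-r|\leq \psi(h_{\text{int}}(r))$. Write each such $r$ in lowest terms $r=p/q$ with $q=h(r)$. Since every representation from Proposition \ref{PropFSB} expresses $r$ as a fraction whose denominator is the integer $\prod_{j\leq k}q_{i_j}(\prod_{k<j\leq n}q_{i_j}-1)$, that denominator is a multiple of $h(r)$, so $h(r)\leq h_{\text{int}}(r)$. Because $\psi$ is non-increasing, $\psi(h_{\text{int}}(r))\leq \psi(h(r))=\psi(q)$, and hence $|x-p/q|\leq\psi(q)$. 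These pairs $(p,q)$ come from distinct rationals and are in lowest terms, so they are infinitely many distinct pairs, and $x\in F_\psi$.

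The only step needing any care is the meaning of ``f.i.m. $\frac{p}{q}$''. If the set is read with arbitrary (non-reduced) pairs $(p,q)$, the inclusion only becomes easier, since the reduced representatives already supply infinitely many pairs. So the set $F_\psi$ under either reading contains $E_{\psi,T,\text{int}}$, and nothing beyond Theorem \ref{ThmIntrin} is needed.
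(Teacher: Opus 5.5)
Your proposal is correct and follows the paper's own argument: the paper deduces Corollary~\ref{CorollaryLowB} from Theorem~\ref{ThmIntrin} by noting that $h_{\mathrm{int}}(r)\geq h(r)$ for every $r\in\mathbb{Q}\cap K_T$, which together with the monotonicity of $\psi$ gives the inclusion of $E_{\psi,T,\text{int}}$ in the set under consideration. Your two additions are correct and only make explicit what the paper leaves implicit: the justification of $h(r)\leq h_{\mathrm{int}}(r)$ (the reduced denominator divides the denominator of any representation), and your remark on how ``f.i.m.'' is read.
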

\medskip

In the first section, we recall some background on geometric measure theory,  self-similar IFS's and the separation properties they satisfy when the parameters defining these IFS's are rationals. Since Theorem~\ref{ThmIntrin} provides the appropriate lower bound for Theorem~\ref{ThmMahlerConj} (see Corollary~\ref{CorollaryLowB}), the proof of Theorem~\ref{ThmMahlerConj} is split into two parts. Section~\ref{SecMahler} establishes the “upper-bound” part of Theorem~\ref{ThmMahlerConj}, while Section~\ref{SecUP} proves Theorem~\ref{ThmIntrin}. Finally, Section~\ref{Secpersp} presents some open problems and perspectives related to the topic.

\section{Notations and recalls}
\label{SecRecall}

Let us start with some notation 

Given $k\in\mathbb{N},$ we write $\mathbb{N}_{\geq k}=\left\{k,k+1,...\right\}$.
\smallskip

 Let $d$ $\in\mathbb{N}$. For $x\in\mathbb{R}^{d}$, $r>0$,  $B(x,r)$ stands for the closed ball of ($\mathbb{R}^{d}$,$\vert\vert \ \ \vert\vert_{\infty}$) of center $x$ and radius $r$. 
 Given a ball $B$, $\vert B\vert$ stands for the diameter of $B$. For $t\geq 0$, $\delta\in\mathbb{R}$ and $B=B(x,r)$,   $t B$ stands for $B(x,t r)$, i.e. the ball with same center as $B$ and radius multiplied by $t$,   and the  $\delta$-contracted  ball $B^{\delta}$ is  defined by $B^{\delta}=B(x ,r^{\delta})$.
\smallskip

Given a set $E\subset \mathbb{R}^d$, $\widering{E}$ stands for the  interior of the set $E$, $\overline{E}$ its  closure and $\partial E$ its boundary, i.e, $\partial E =\overline{E}\setminus \widering{E}$. If $E$ is a Borel subset of $\R^d$, its Borel $\sigma$-algebra is denoted by $\mathcal B(E)$.
\smallskip

Given a topological space $X$, the Borel $\sigma$-algebra of $X$ is denoted $\mathcal{B}(X)$ and the space of probability measures on $\mathcal{B}(X)$ is denoted by $\mathcal{M}(X).$ 

\sk
 The $d$-dimensional Lebesgue measure on $(\mathbb R^d,\mathcal{B}(\mathbb{R}^d))$ is denoted by 
$\mathcal{L}^d$.
\smallskip

For $\mu \in\mathcal{M}(\R^d)$,   $\supp(\mu)=\left\{x\in \mathbb{R}^d: \ \forall r>0, \ \mu(B(x,r))>0\right\}$ is the topological support of $\mu$.
\smallskip

Given two integers $n,m$ one will write $n\vert m$ if there exists $k\in\mathbb{Z}$ such that $m=nk$.
\smallskip

Given $f:\mathbb{N}\to \mathbb{R}_{+,}$ we will write $f(q)=o_{q\to +\infty}(1)$ if $\lim_{q\to +\infty} \frac{f(q)}{q}=0.$
\smallskip

\subsection{Recalls on geometric measure theory and fractal geometry}

We now recall some definitions.

\begin{definition}
\label{hausgau}
Let $\zeta:\mathbb{R}^{+}\to\mathbb{R}^+$ be increasing in a neighborhood of $0$. The Hausdorff outer measure at scale $t\in(0,+\infty]$ associated with $\zeta$ of a set $E$ is defined by
\begin{equation}
\label{gaug}
\mathcal{H}^{\zeta}_t(E)=\inf \left\{\sum_{n\in\mathbb{N}}\zeta(|B_n|):\ |B_n|\leq t,\ B_n \text{ closed balls and } E\subset \bigcup_{n\in \mathbb{N}}B_n\right\}.
\end{equation}
The Hausdorff measure associated with $\zeta$ of a set $E$ is defined by
\[
\mathcal{H}^{\zeta}(E)=\lim_{t\to 0^+}\mathcal{H}^{\zeta}_t(E).
\]
\end{definition}

For $t\in (0,+\infty]$, $s\geq 0$ and $\zeta(x)=x^s$, we write $\mathcal{H}^{\zeta}_t(E)=\mathcal{H}^{s}_t(E)$ and $\mathcal{H}^{\zeta}(E)=\mathcal{H}^{s}(E)$, which are respectively called the $s$-dimensional Hausdorff outer measure at scale $t$ and the $s$-dimensional Hausdorff measure.

\begin{definition}
\label{dim}
Let $\mu\in\mathcal{M}(\mathbb{R}^d)$. For $x\in \supp(\mu)$, the lower dimension of $\mu$ at $x$ is defined by
\[
\underline\dim_{\rm loc}(\mu,x)=\liminf_{r\to 0^+}\frac{\log\mu(B(x,r))}{\log r}.
\]
The  Hausdorff dimension of $\mu$ is defined by
\begin{equation}
\label{dimmu}
\dim_H(\mu)=\mathrm{ess\,inf}_{\mu}(\underline\dim_{\rm loc}(\mu,x)).
\end{equation}
\end{definition}

It is known (see~\cite{F} for details) that
\begin{align*}
&\dim_H(\mu)=\inf\{\dim_H(E):\,E\in\mathcal{B}(\mathbb{R}^d),\ \mu(E)>0\}.
\end{align*}

\subsection{Some geometric properties of self-similar iterated function systems}
\subsubsection{Basic definitions and facts}

A map $f:\mathbb{R}^d\to\mathbb{R}^d$ is called a contracting similarity if there exist $\lambda\in(-1,1)\setminus \left\{0\right\}$, $O\in\mathcal{O}_d(\mathbb{R})$ and $a\in\mathbb{R}^d$ such that
\[
f(x)=\lambda Ox+a.
\]
The number $c=|\lambda|$ is called the contraction ratio of $f$.

\begin{definition}
\label{def-ssmu}
A self-similar IFS is a family $S=\{f_i\}_{1\leq i\leq m}$ of $m\geq 2$ contracting similarities of $\mathbb{R}^d$. Let $(p_i)_{1\leq i\leq m}$ be a probability vector.

The associated self-similar measure $\mu$ is the unique probability measure satisfying
\begin{equation}
\label{def-ssmu2}
\mu=\sum_{i=1}^m p_i\,\mu\circ f_i^{-1}.
\end{equation}
Its support is the attractor $K$, i.e.\ the unique non-empty compact set such that
\[
K=\bigcup_{i=1}^m f_i(K).
\]
\end{definition}

The existence and uniqueness of $K$ and $\mu$ are standard~\cite{Hutchinson}, and by a result of Feng--Hu~\cite{FH}, self-similar measures are exact dimensional. Moreover, it was established by Falconer in \cite{Dimh=DimB} that any self-similar set $K$ satisfies 
\begin{equation}
\label{DimHEqDimB}
\dim_H K =\dim_B K,
\end{equation}
where $\dim_B$ denotes the box dimension.

Given a self-similar IFS $T=\{f_1,\dots,f_m\}$, we write
\[
\begin{cases}
\Lambda=\{1,\dots,m\},\\
\Lambda^{*}=\bigcup_{k\geq 1}\Lambda^k,\\
0<c_1,\dots,c_m<1 \text{ the contraction ratios of } f_1,\dots,f_m,\\
\text{for }\underline{i}=(i_1,\dots,i_n)\in\Lambda^*,\ c_{\underline{i}}=c_{i_1}\cdots c_{i_n},\\
\text{and } f_{\underline{i}}=f_{i_1}\circ\cdots\circ f_{i_n}.
\end{cases}
\]

\subsubsection{Separation properties of rational IFSs}
\label{SecSS}
Let $T=\{f_1,\dots,f_m\}$ be a self-similar IFS, let $K$ be its attractor, and let
$0<c_1\leq \cdots\leq c_m<1$ be the contraction ratios of $f_1,\dots,f_m$.
For $r>0$, we define
\[
\Lambda_r=\Big\{\underline{i}=(i_1,\dots,i_n)\in \bigcup_{k\geq 1}\{1,\dots,m\}^k:\ 
c_{\underline{i}}\leq r< c_{(i_1,\dots,i_{n-1})}\Big\}.
\]

We begin by recalling the definition of the Asymptotically Weak Separation Condition (AWSC). This property was introduced by Feng in \cite{Feng2007} and applies to a very large class of IFSs (see \cite[Theorem 1.3]{BFmult}), as mentioned in the introduction.

\begin{definition}
\label{DefAWSC}
We say that $T$ satisfies the AWSC if
\[
\lim_{r\to 0^+}\frac{\log \Big(\sup_{x\in K} 
\#\big\{f_{\underline{i}}:\ \underline{i}\in \Lambda_r \text{ and } f_{\underline{i}}(K)\cap B(x,r)\neq \emptyset\big\} \Big)}{\log r}=0.
\]
\end{definition}

The following result is proved in~\cite[Theorem~1.5]{BFmult}.

\begin{theoreme}[\cite{BFmult}]
Let $q_1,\dots,q_m\in\mathbb{N}$ and $p_1,\dots,p_m\in\mathbb{Z}$. Then the IFS
\[
T=\Big\{f_i(\cdot)=\frac{\cdot}{q_i}+\frac{p_i}{q_i}\Big\}_{1\leq i\leq m}
\]
satisfies the AWSC.
\end{theoreme}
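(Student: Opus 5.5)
Since the theorem is quoted from \cite[Theorem~1.5]{BFmult}, one could simply cite it. To see why it holds, I would proceed as follows. By Definition~\ref{DefAWSC}, we must show that for every $x\in K$ and small $r$, the number of distinct maps $f_{\underline{i}}$ with $\underline{i}\in\Lambda_r$ and $f_{\underline{i}}(K)\cap B(x,r)\neq\emptyset$ is at most $r^{-\varepsilon}$ for any fixed $\varepsilon>0$. The approach rests on arithmetic rigidity. Each composition has the form
\[
f_{\underline{i}}(y)=\frac{y}{Q_{\underline{i}}}+\frac{P_{\underline{i}}}{Q_{\underline{i}}},\qquad Q_{\underline{i}}=q_{i_1}\cdots q_{i_n},\quad P_{\underline{i}}\in\mathbb{Z},
\]
which one checks by induction on $n$. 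A map $f_{\underline{i}}$ is therefore determined by the pair $(Q_{\underline{i}},P_{\underline{i}})$. So it suffices to bound the number of possible contraction values and, for each one, the number of possible translations.

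The first step handles the contraction factors. For $\underline{i}\in\Lambda_r$ we have $r\min_j q_j^{-1}< Q_{\underline{i}}^{-1}\leq r$. Moreover $Q_{\underline{i}}=\prod_j q_j^{a_j}$ with $\sum_j a_j=n\leq C\log(1/r)$. The number of such multi-indices $(a_1,\dots,a_m)$ is at most $(C\log(1/r)+1)^m$. This is polynomial in $\log(1/r)$, hence $r^{-o(1)}$.

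The second step fixes $Q=Q_{\underline{i}}$ and counts translations. The translation $P_{\underline{i}}/Q$ lies in $Q^{-1}\mathbb{Z}$, a lattice of spacing $Q^{-1}\geq r\min_j q_j^{-1}$. The condition $f_{\underline{i}}(K)\cap B(x,r)\neq\emptyset$ forces $P_{\underline{i}}/Q$ into an interval of length $2r+|K|Q^{-1}\leq (2+|K|)r$ around $x$, after translating by the bounded offset coming from $K$. Such an interval contains at most $C'$ lattice points, with $C'$ independent of $r$ and $x$. Combining the two steps, the count is at most $C'(C\log(1/r)+1)^m$. Its logarithm divided by $\log r$ tends to $0$, which is the AWSC.

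The main obstacle is the counting of distinct maps $f_{\underline{i}}$ rather than words $\underline{i}$, since many words may give the same map. The argument avoids this exactly by parametrizing maps by $(Q,P)$. One must also check the uniformity in $x\in K$ and the fact that $\Lambda_r$ uses the ratios $c_{\underline{i}}=Q_{\underline{i}}^{-1}$. Both are immediate from the bounds above.
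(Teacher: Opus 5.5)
Your proof is correct, and there is no proof in the paper to compare it with: the statement is imported from \cite[Theorem~1.5]{BFmult}, where Barral and Feng verify the AWSC for certain classes of overlapping algebraic IFSs.

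Your route is a direct, self-contained counting argument specific to rational parameters. You parametrize each \emph{map} $f_{\underline{i}}$, rather than each word, by the pair $(Q_{\underline{i}},P_{\underline{i}})\in\mathbb{N}\times\mathbb{Z}$, which neutralizes exact overlaps. You bound the number of denominators occurring in $\Lambda_r$ by the number of exponent vectors $(a_1,\dots,a_m)$ with $a_j\le n\le 1+\log(1/r)/\log 2$. For a fixed $Q$, you count numerators as points of the lattice $Q^{-1}\mathbb{Z}$, whose spacing exceeds $r/\max_j q_j$. These points lie in an interval of length at most $2r+|K|Q^{-1}\le (2+|K|)r$, so there are at most $(2+|K|)\max_j q_j+1$ of them.

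Compared with the citation, this gives an explicit bound polylogarithmic in $1/r$, much stronger than the required $r^{-o(1)}$, and uniform in $x\in\mathbb{R}$. It also recovers the WSC in the homogeneous case $q_1=\dots=q_m$, which the paper calls straightforward. There, all words of $\Lambda_r$ have the same length and hence a single denominator, so the count is bounded by a constant. The citation, in turn, buys brevity and connects the statement to Barral--Feng's multifractal framework.

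Two details you should state explicitly. First, the standing assumption $q_i\ge 2$ from \eqref{Deffi} is what makes the maps contractions and gives your bound on $n$. Second, the count is at least $1$ because $x\in K=\bigcup_{\underline{i}\in\Lambda_r}f_{\underline{i}}(K)$, so its logarithm is well defined and non-negative.
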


When the above IFS $T$ is homogeneous, i.e.\ when $q_1=\cdots=q_m$, it satisfies a stronger separation condition.

\begin{definition}
\label{DefWSC}
Let $T=\{f_1,\dots,f_m\}$ be a self-similar IFS. We say that $T$ satisfies the Weak Separation Condition (WSC) if
\[
\sup_{x\in K,\ r>0}
\#\big\{f_{\underline{i}}:\ \underline{i}\in \Lambda_r \text{ and } f_{\underline{i}}(K)\cap B(x,r)\neq \emptyset\big\}<+\infty.
\]
\end{definition}
For references regarding IFS's satisfying the weak separation conditions, see for instance \cite{DefWSC,ZernerWSC}.
Given $q\in\mathbb{N}$ and $p_1,\dots,p_m\in\mathbb{Z}$, it is straightforward to check that
\[
T=\Big\{f_i(\cdot)=\frac{\cdot}{q}+\frac{p_i}{q}\Big\}_{1\leq i\leq m}
\]
satisfies the WSC. Moreover, Zerner established the following result.

\begin{theoreme}[\cite{ZernerWSC}]
\label{ThmZerner}
Let $T$ be a self-similar IFS satisfying the WSC and let $K_T$ be its attractor. Then
\[
0<\mathcal{H}^{\dim_H K_T}(K_T)<+\infty.
\]
\end{theoreme}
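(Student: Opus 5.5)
The plan is to work with the number of \emph{distinct} maps at scale $r$,
\[
N_r=\#\{f_{\underline{i}}:\ \underline{i}\in\Lambda_r\},
\]
and to prove two things: that $N_r$ is comparable to $r^{-s}$ with $s=\dim_H K_T$, up to multiplicative constants independent of $r$; and that a natural uniform measure on the level-$r$ pieces gives mass $\lesssim r^{s}$ to balls of radius $r$. The upper bound $\mathcal H^s(K_T)<\infty$ will come from covering by the pieces $f(K_T)$, $f$ a distinct level-$r$ map, and the lower bound $\mathcal H^s(K_T)>0$ from the mass distribution principle. Throughout, let $C_0$ denote the supremum in Definition \ref{DefWSC}.

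\textbf{Step 1 (quasi-multiplicativity).} I will first check that $N_{rt}\leq C N_rN_t$. Every $\underline{k}\in\Lambda_{rt}$ splits as $\underline{k}=\underline{i}\,\underline{j}$ with $\underline{i}\in\Lambda_r$ and $\underline{j}$ of contraction ratio in $[c_1 t, t/c_1]$. The remainders $\underline{j}$ lie in boundedly many scales $\Lambda_{t'}$ with $t'\asymp t$, and at each of these scales the number of distinct maps is comparable to $N_t$.

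The reverse inequality $N_{rt}\geq c\,N_rN_t$ is where the WSC enters, and I expect it to be the main obstacle. Suppose $f\circ g=f'\circ g'$ with $f,f'$ distinct level-$r$ maps and $g,g'$ level-$t$ maps. Then $f(K_T)\cap f'(K_T)\neq\emptyset$, so $f'$ is one of at most $C_0$ level-$r$ maps whose images meet the ball of radius $r$ around a point of $f(K_T)$. For a fixed pair $(f,f')$ the equation determines $g'$ from $g$. Hence each composite map is hit by at most $C_0$ pairs, which gives $N_{rt}\geq N_rN_t/C_0$. Some care is needed because $c_{\underline{i}\,\underline{j}}$ may straddle the threshold $rt$; I would handle this by working instead with the ratio classes $\{c_{\underline{i}}\in(c_1r,r]\}$, where both inequalities hold with constants.

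Once both inequalities are in hand, apply Fekete's lemma to $\log N_{e^{-n}}$ and to $-\log N_{e^{-n}}$, each of which is subadditive up to an additive constant. This yields an exponent $s'$ and constants $0<c<C$ with
\[
c\,r^{-s'}\leq N_r\leq C\,r^{-s'}\quad\text{for all small } r.
\]

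\textbf{Step 2 ($s'=\dim_H K_T$ and the upper bound).} The sets $f(K_T)$, for distinct level-$r$ maps $f$, cover $K_T$ and have diameter at most $r|K_T|$. Therefore
\[
\mathcal H^{s'}_{r|K_T|}(K_T)\leq N_r\,(r|K_T|)^{s'}\leq C',
\]
which gives $\dim_H K_T\leq s'$ and $\mathcal H^{s'}(K_T)<\infty$. Conversely, each ball of radius $r$ centred on $K_T$ meets at most $C_0$ of these pieces. Hence the covering number of $K_T$ at scale $r$ is at least $N_r/C_0$, so $\dim_B K_T\geq s'$. Combining this with \eqref{DimHEqDimB} gives $s'=\dim_H K_T=s$.

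\textbf{Step 3 (lower bound).} Fix $x_0\in K_T$ and set
\[
\nu_\rho=N_\rho^{-1}\sum_{f}\delta_{f(x_0)},
\]
where the sum is over the distinct level-$\rho$ maps $f$. Let $\nu$ be a weak-$*$ limit of $\nu_\rho$ as $\rho\to0$; it is a probability measure supported on $K_T$.

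Now take $x\in K_T$ and $\rho\ll r$. Every distinct level-$\rho$ map $h$ with $h(x_0)\in B(x,r)$ factors as $f\circ g$, where $f$ is a level-$r$ map whose image meets $B(x,r)$ and $g$ is a map at scale $\asymp\rho/r$. There are at most $C_0$ choices of $f$ and at most $C N_{\rho/r}$ choices of $g$. Using Step 1,
\[
\nu_\rho(B(x,r))\leq \frac{C_0\,C\,N_{\rho/r}}{N_\rho}\leq \frac{C''}{N_r}\leq C'''\,r^{s}.
\]
Passing to the limit along the chosen subsequence (with $B(x,2r)$ in place of $B(x,r)$ to handle closedness) gives $\nu(B(x,r))\lesssim r^s$ for all $x\in K_T$ and all small $r$. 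The mass distribution principle then yields $\mathcal H^s(K_T)>0$, which completes the proof together with Step 2.
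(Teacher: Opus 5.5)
Your proof is correct. It also differs from what the paper does, because the paper gives no argument here: Theorem~\ref{ThmZerner} is imported from \cite{ZernerWSC} and used as a black box, for instance to get Proposition~\ref{WSCAlfh} in the appendix via $\mathcal{H}^{\dim_H K_T}$ restricted to $K_T$.

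Your route is self-contained:
\begin{itemize}
\item two-sided quasi-multiplicativity of the number $N_r$ of distinct level-$r$ maps, with the WSC used exactly to make $(f,g)\mapsto f\circ g$ at most $C_0$-to-one;
\item Fekete's lemma to get $N_r\asymp r^{-s'}$;
\item covering by cylinders for $\mathcal{H}^{s'}(K_T)<\infty$;
\item the uniform measure on distinct maps plus the mass distribution principle for positivity.
\end{itemize}

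It also yields slightly more than the statement:
\begin{enumerate}
\item Step~3 only uses $N_r\gtrsim r^{-s'}$, so it already gives $\mathcal{H}^{s'}(K_T)>0$. Together with Step~2 this gives $s'=\dim_H K_T$ without invoking \eqref{DimHEqDimB}.
\item The same counting gives the reverse bound $\nu(B(x,r))\gtrsim r^{s}$. Take a cylinder $f_{\underline{i}}(K_T)\ni x$ contained in $B(x,r)$ with $c_{\underline{i}}\asymp r$. Since $\underline{i}\,\underline{j}\in\Lambda_\rho$ iff $\underline{j}\in\Lambda_{\rho/c_{\underline{i}}}$, the maps $f_{\underline{i}}\circ f_{\underline{j}}$ with $\underline{j}\in\Lambda_{\rho/c_{\underline{i}}}$ are $N_{\rho/c_{\underline{i}}}$ distinct level-$\rho$ maps. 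Hence $\nu_\rho(B(x,r))\geq N_{\rho/c_{\underline{i}}}/N_\rho\gtrsim 1/N_{c_{\underline{i}}}\gtrsim r^{s}$, and the bound passes to the limit on closed balls. So your $\nu$ is $s$-Ahlfors regular, which gives Proposition~\ref{WSCAlfh} directly rather than as a consequence of Zerner's theorem.
\end{enumerate}
The identity $\underline{i}\,\underline{j}\in\Lambda_\rho\Leftrightarrow\underline{j}\in\Lambda_{\rho/c_{\underline{i}}}$ is also the cleanest way to handle the threshold-straddling issue you flag in Step~1.

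One point you should write out is the comparability $N_{t'}\asymp N_t$ for $t'\asymp t$, which you use in Step~1 and Step~3. The bound $N_{t'}\lesssim N_t$ for $t'<t$ is trivial. For $t'>t$ it is not automatic, since distinct level-$t'$ maps may share a common extension $h$ at level $t$. It is again the WSC that bounds this multiplicity: all such maps have images containing $h(K_T)$, so at most $C_0$ of them can share $h$.
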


For $0\leq s\leq d,$ recall that a measure $\mu\in \mathcal{M}(\mathbb{R}^d)$ is called $s$-Alfhors-regular if there exists a constant $C>0$ such that for every $x\in\supp(\mu)$ and every $r>0,$ one has $$ C^{-1}r^s \leq \mu\Big(B(x,r)\Big)\leq C r^s.$$
Although it is not stated explicitly, it is a direct consequence of Theorem~\ref{ThmZerner} that if $T$ satisfies the WSC, then $K_T$ is Ahlfors regular (i.e. there exists a $\dim_H K_T$-Alfors-regular measure $\mu \in\mathcal{M}(\mathbb{R}^d)$ with $\supp(\mu)=K_T$). For the sake of completeness, we prove this fact in the appendix.

\begin{proposition}
\label{WSCAlfh}
Let $T$ be a self-similar IFS satisfying the WSC. Then $K_T$ is $\dim_H K_T$-Ahlfors regular.
\end{proposition}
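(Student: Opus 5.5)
We must prove Proposition \ref{WSCAlfh}: if $T$ satisfies the WSC, then $K_T$ is $s$-Ahlfors regular with $s=\dim_H K_T$. The plan is to take $\mu=\mathcal{H}^s|_{K_T}/\mathcal{H}^s(K_T)$, which is a well-defined probability measure supported on $K_T$ by Theorem \ref{ThmZerner} ($0<\mathcal{H}^s(K_T)<\infty$). We then establish the upper bound $\mu(B(x,r))\leq Cr^s$ and the lower bound $\mu(B(x,r))\geq C^{-1}r^s$ separately, for $x\in K_T$ and $0<r\leq |K_T|$; larger $r$ are trivial up to constants. Throughout, $c_{\min}=\min_i c_i$.

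\textbf{Upper bound.} For $\underline{i}\in\Lambda_r$, the map $f_{\underline{i}}$ is a similarity with ratio $c_{\underline{i}}$, so $\mathcal{H}^s(f_{\underline{i}}(K_T))=c_{\underline{i}}^s\mathcal{H}^s(K_T)\leq r^s\mathcal{H}^s(K_T)$. Since $\{f_{\underline{i}}(K_T)\}_{\underline{i}\in\Lambda_r}$ covers $K_T$ ($\Lambda_r$ is a cut set of the coding tree), we get
\[
\mu(B(x,r))\leq \sum_{\underline{i}\in\Lambda_r,\ f_{\underline{i}}(K_T)\cap B(x,r)\neq\emptyset}\mu(f_{\underline{i}}(K_T))\leq M r^s ,
\]
where $M$ is the WSC constant of Definition \ref{DefWSC}. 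The count there is over distinct maps $f_{\underline{i}}$, which is exactly what we need, because different words with the same map have the same image.

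\textbf{Lower bound.} Fix $x\in K_T$ and write $x=\pi(j_1j_2\dots)$. Let $n$ be minimal with $c_{j_1\cdots j_n}\leq r/|K_T|$. Then $f_{j_1\cdots j_n}(K_T)\subset B(x,r)$, since it contains $x$ and has diameter at most $r$, and $c_{j_1\cdots j_n}\geq c_{\min} r/|K_T|$. Hence
\[
\mu(B(x,r))\geq \mu(f_{\underline{j}}(K_T))=c_{\underline{j}}^s\geq (c_{\min}/|K_T|)^s r^s ,
\]
writing $\underline{j}=j_1\cdots j_n$.

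\textbf{Main obstacle.} The delicate point is the equality $\mu(f_{\underline{j}}(K_T))=c_{\underline{j}}^s$ used in the lower bound. It holds because $\mathcal{H}^s$ scales under similarities and is translation invariant, together with $\mathcal{H}^s(K_T)<\infty$; it does not require disjointness of the pieces. So overlaps only matter in the upper bound, and there they are controlled exactly by the WSC count, which completes the argument.
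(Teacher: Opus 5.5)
Your proof is correct and follows essentially the same route as the paper: take $\mathcal{H}^{s}$ restricted to $K_T$ (finite and positive by Zerner's theorem), get the lower bound from a single cylinder $f_{\underline{j}}(K_T)\ni x$ of size comparable to $r$ inside $B(x,r)$, and get the upper bound by covering $K_T\cap B(x,r)$ with the $\Lambda_r$-cylinders that meet the ball, whose number of distinct maps is bounded by the WSC. Your remark that one must count distinct maps $f_{\underline{i}}$ rather than words is a useful precision over the paper's $\#\Lambda_{B(y,\rho)}$, which counts words and so need not be bounded when there are exact overlaps.
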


\section{Proof of Theorem \ref{ThmMahlerConj} and Theorem \ref{MahlConjO}}

\label{SecMahler}
Let $m,b\geq 2$, let $p_1,\dots,p_m\in\mathbb{Z}$, and let
\[
T=\Big\{f_i(\cdot)=\frac{\cdot}{b}+\frac{p_i}{b}\Big\}.
\]
Denote by $K_T$ the attractor of $T$. 

Let $\widetilde{K}_T =K_T \text{ mod }1\subset \mathbb{T}^1$ be the projection of $K_T$ onto the one-dimensional torus. It is easily verified that if $x\in \widetilde{K}_T$, $bx\in \widetilde{K}_T$. Moreover, $\widetilde{K}_T$ is compact; hence, if $\widetilde{K}_T\neq \mathbb{T}^1$, then $\mathbb{T}^1\setminus \widetilde{K}_T$ is a non-empty open set, while if $\widetilde{K}_T=\mathbb{T}^1$, then $K_T$ has non-empty interior.

We will show the following.

\begin{theoreme}
\label{MainThMahl}
Let $S\subset \mathbb{T}^1$ be a $\times b$-invariant set.
Assume that $\mathbb{T}^1\setminus S$ has non-empty interior. Then,   there exists $Q \in\mathbb{N}$ such that for every $q\geq Q,$ $q\wedge b=1,$ $$(\exists p \in\mathbb{Z}, p\wedge q=1 : \ \frac{p}{q}\in S)\Rightarrow O_q(b)\leq q^{f(q)},$$
with $f(q)=\frac{C}{\sqrt{\log \log \log q}},$ where  $C>0$ is a constant.
\end{theoreme}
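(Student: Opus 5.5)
The plan is to convert the hypothesis into an equidistribution statement for cosets of the cyclic subgroup $H_q=\langle b\rangle\subset \mathbb{Z}_q^*$, and then use exponential sum bounds over multiplicative subgroups. Fix an open interval $I\subset \mathbb{T}^1\setminus S$ of length $\varepsilon>0$. Suppose $q\wedge b=1$ and $\frac{p}{q}\in S$ with $p\wedge q=1$. Since $S$ is $\times b$-invariant, the whole orbit $\{b^kp/q \bmod 1: k\geq 0\}=\{ph/q : h\in H_q\}$ lies in $S$. So the coset $pH_q\subset\mathbb{Z}_q^*$, viewed in $\frac{1}{q}\mathbb{Z}/\mathbb{Z}$, misses $I$, and $|H_q|=O_q(b)$. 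We argue by contraposition: it suffices to show that if $|H_q|> q^{f(q)}$ and $q$ is large, then every coset $pH_q$ with $p\wedge q=1$ meets every interval of length $\varepsilon$.

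\textbf{Step 1 (Fourier reduction).} By the Erd\H{o}s--Tur\'an inequality, or by testing against a smooth bump supported in $I$, if $pH_q$ misses $I$ then there is $1\leq |a|\leq A(\varepsilon)$ such that
\[
\Big|\sum_{h\in H_q} e\big(aph/q\big)\Big|\geq c(\varepsilon)\,|H_q| .
\]

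\textbf{Step 2 (reduction to the reduced denominator).} Write $d=a\wedge q\leq A$, $q'=q/d$ and $a'=ap/d$, so that $a'\wedge q'=1$. The sum equals $\frac{|H_q|}{|H_{q'}|}\sum_{h\in H_{q'}}e(a'h/q')$, where $H_{q'}$ is the image of $H_q$ in $\mathbb{Z}_{q'}^*$. The kernel of $\mathbb{Z}_q^*\to\mathbb{Z}_{q'}^*$ has size $\phi(q)/\phi(q')\leq d\leq A$. Hence $|H_{q'}|\geq |H_q|/A\geq q'^{f(q)/2}$ for large $q$, and we obtain a normalized exponential sum over a subgroup of $\mathbb{Z}_{q'}^*$ of size at least $q'^{\delta}$ that is at least $c(\varepsilon)$ in absolute value, with $\delta\approx f(q)/2$.

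\textbf{Step 3 (exponential sum bound).} Here I would invoke Bourgain's estimate for multiplicative subgroups of $\mathbb{Z}_{q}^*$ with arbitrary modulus (Bourgain, and its quantitative forms in the spirit of Bourgain--Glibichuk--Konyagin). If $|H|\geq q^{\delta}$ and $H$ is not concentrated on a class $1\bmod q_1$ for a large divisor $q_1\mid q$, then $\max_{a\wedge q=1}|\sum_{h\in H}e(ah/q)|\leq q^{-\eta(\delta)}|H|$. Tracking the constants in the sum-product iteration gives a dependence of the form $\delta\gtrsim (\log\log\log q)^{-1/2}$ for a nontrivial saving $\eta>0$; this is where the precise shape $f(q)=C/\sqrt{\log\log\log q}$ comes from. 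Once the saving $q^{-\eta}$ is below $c(\varepsilon)$ for large $q$, it contradicts Step 1.

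\textbf{Step 4 (the degenerate case).} If $H_{q'}$ is concentrated on $\{x\equiv 1 \bmod q_1\}$ for a large divisor $q_1\mid q'$, I would pass to the quotient. Either the sum factors through a smaller modulus, where the same argument applies again (this is an iteration that terminates after boundedly many steps in terms of $\delta$), or the concentration forces many $b^k\equiv 1\bmod q_1$. In the latter case $H_{q_1}$ is small, and one uses instead the additive structure of $\{x\equiv 1 \bmod q_1\}$: a coset of it is an arithmetic progression with step $q_1/q'$, which is $\varepsilon$-dense once $q'/q_1$ is large. I expect Steps 3--4 to be the main obstacle: obtaining an explicit, uniform-in-modulus version of Bourgain's bound, with $\delta$ allowed to decay like $(\log\log\log q)^{-1/2}$, together with the bookkeeping for degenerate subgroups. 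Steps 1--2 are routine.
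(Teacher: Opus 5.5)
Your Steps~1--2 follow the paper's proof of Theorem~\ref{MainThMahl}: Erd\H{o}s--Tur\'an with $A$ of order the inverse length of the gap, then passage to the reduced modulus $q'=q/(a\wedge q)$ with a unit frequency. Your Step~2 is actually tidier than the paper's version. The paper proves Proposition~\ref{LEmmaqepsi} by assuming $O_{q/(q\wedge t)}(b)$ is large for every $t\leq 2C/\ell$, then transfers back to $O_q(b)$ via $O_q(b)\leq p\,O_{q/p}(b)$, losing a factor $3$ in the exponent. Your bound $|H_q|/|H_{q'}|\leq \phi(q)/\phi(q')\leq d\leq A$ does the same job in one line.

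The gap is in Steps~3--4, which do not establish the statement.
\begin{itemize}
\item You posit a version of Bourgain's bound that carries a non-concentration hypothesis.
\item You assert, without argument, that ``tracking constants'' gives the threshold $\delta\asymp(\log\log\log q)^{-1/2}$ with a saving that still tends to $0$. That requires $\eta(\delta(q))\log q\to\infty$, which is the entire quantitative content.
\item Your sketch of the degenerate case does not hold as stated. The set $\{x\in\mathbb{Z}_{q'}^*:\ x\equiv 1 \bmod q_1\}$ is a full arithmetic progression only when every prime factor of $q'$ divides $q_1$. Otherwise it contains a copy of $\mathbb{Z}_{r^k}^*$ for some prime power $r^k$ exactly dividing $q'$ with $r\nmid q_1$, and $H_{q'}$ may meet it in a subgroup with no additive structure.
\item An iteration ``bounded in terms of $\delta$'' is not uniform when $\delta=\delta(q)\to 0$.
\end{itemize}

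The paper needs none of this. It quotes Bourgain's estimate (Theorem~\ref{ThmBourg}) as a black box: for an arbitrary subgroup $H<\mathbb{Z}_{q'}^*$ with $\#H\geq q'^{C/\sqrt{\log\log\log q'}}$, one has $\max_{a\in\mathbb{Z}_{q'}^*}\big|\sum_{h\in H}\exp(2\pi i\, ah/q')\big|=o(\#H)$, with no degeneracy condition. Your Step~2 already gives a unit frequency $a'$ modulo $q'$ and $|H_{q'}|\geq |H_q|/A$. Apply that theorem directly, taking the constant in $f$ a few times Bourgain's (possible because $q'\geq q/A$). This contradicts Step~1 for all large $q$. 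So replace Step~3 by this citation and drop Step~4; the argument is then complete.
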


In the next subsection, we explain how Theorem~\ref{ThmMahlerConj} and Theorem \ref{MahlConjO} are  derived from Theorem \ref{MainThMahl}, Theorem \ref{ConjOtilde} and Conjecture \ref{ConjectureO}. In Subsection \ref{SecConjOtilde}, we prove Theorem \ref{ConjOtilde}. The two next subsections are dedicated to the proof of Theorem \ref{MainThMahl}. Subsection \ref{SecRecEqui} recall basic facts and results about equi-distribution theory and Theorem \ref{MainThMahl} is established in Subsection \ref{SecProMal}. 
\subsection{Proof of Theorem \ref{ThmMahlerConj} and Theorem \ref{MahlConjO} assuming Theorems \ref{MainThMahl}, \ref{ConjOtilde} and \ref{ThmIntrin}}
\subsubsection{Proof of the upper-bound}

Given $S \subset \mathbb{T}^1$, a $\times b$ invariant set and $ \mathcal{N}$ a subset of $\mathbb{N},$ define 
\[
Q_{b,n,\mathcal{N}}(S)=\Big\{q\in \mathcal{N}, b^{n-1}<q\leq b^n:\ \exists p\in\mathbb{N},\ \frac{p}{q}\in S,\ p\wedge q=1\Big\}
\]
and $Q_{b,n}=Q_{b,n,\mathbb{N}}.$

Theorem \ref{MainThMahl} together with Theorem \ref{ConjOtilde} and Conjecture \ref{ConjectureO} yields the following corollary.

\begin{corollary}
\label{MainCoroMahl}
Let $S\subset \mathbb{T}^1$ be a $\times b$-invariant set.
Assume that $\mathbb{T}^1\setminus S$ has non-empty interior. Then:
\begin{itemize}
\item[(1)]  for every $N\in \mathbb{N},$ \[
\lim_{n\to +\infty}\frac{\log \#Q_{b,n, \mathcal{P}_N}(S)}{n}=0.
\]
\item[(2)] assume that Conjecture \ref{ConjectureO} holds true, then \[
\lim_{n\to +\infty}\frac{\log \#Q_{b,n}(S)}{n}=0.
\]
\end{itemize}

\end{corollary}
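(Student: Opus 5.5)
We combine Theorem \ref{MainThMahl} with Theorem \ref{ConjOtilde} (for item (1)) or with Conjecture \ref{ConjectureO} (for item (2)). The one point needing care is the denominators $q$ not coprime to $b$, since Theorem \ref{MainThMahl} only controls $q\wedge b=1$.

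First fix $f(q)=\frac{C}{\sqrt{\log\log\log q}}$ and $Q$ as given by Theorem \ref{MainThMahl}. This $f$ is non-increasing for $q$ large and tends to $0$; redefining it on the finitely many small $q$ (for instance by a large constant) makes it non-increasing on all of $\mathbb{N}$ without affecting the asymptotics. For $q\geq Q$ with $q\wedge b=1$, the existence of $p\wedge q=1$ with $\frac pq\in S$ forces $O_q(b)\leq q^{f(q)}$. Hence the coprime part of $Q_{b,n,\mathcal{P}_N}(S)$ is contained, up to the finitely many $q<Q$, in
$$\left\{q\leq b^n : q\wedge b=1,\ \omega(q)\leq N,\ O_q(b)\leq q^{f(q)}\right\}.$$
By Theorem \ref{ConjOtilde} with $x=b^n$, the logarithm of the cardinality of this set is $o(n)$. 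For item (2) we drop the condition $\omega(q)\leq N$ and use Conjecture \ref{ConjectureO} instead.

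The main obstacle is the non-coprime denominators. Write $q=q'q''$, where $q''$ is composed of primes dividing $b$ and $q'\wedge b=1$. If $\frac pq\in S$ with $p\wedge q=1$, then by $\times b$-invariance, $b^k\frac pq \bmod 1 \in S$ for all $k$. Choose $k$ with $q''\mid b^k$. Then $b^k\frac pq=\frac{p'}{q'}$, and since $p\wedge q'=1$ and $b\wedge q'=1$, we have $p'\wedge q'=1$. So $q'$ is itself a denominator of a reduced fraction in $S$. Moreover $\omega(q')\leq\omega(q)\leq N$, so in case (1) we stay in $\mathcal{P}_N$.

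It remains to count the $q\leq b^n$ in terms of $q'$. Each $q$ is determined by the pair $(q',q'')$. The number of $b$-smooth integers $q''\leq b^n$ is at most $(n\log_2 b+1)^{\omega(b)}$, which is polynomial in $n$. For each such $q''$, the admissible $q'$ satisfy $q'\leq b^n$ and lie either in the exceptional set above or below $Q$. This gives
$$\#Q_{b,n,\mathcal{P}_N}(S)\leq (n\log_2 b+1)^{\omega(b)}\left(Q+\#\left\{q'\leq b^n: q'\wedge b=1,\ \omega(q')\leq N,\ O_{q'}(b)\leq q'^{f(q')}\right\}\right).$$
Taking $\frac1n\log$ of this bound gives a limit of $0$ in item (1), and the same argument gives item (2). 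If $Q_{b,n,\mathcal{P}_N}(S)$ is empty the statement is read with $\log 0$ excluded, or as a trivially true upper bound, since $\#\geq 0$ only needs an upper estimate.
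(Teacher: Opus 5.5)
Your proof is correct and follows the paper's route. The paper obtains the corollary by feeding the conclusion of Theorem~\ref{MainThMahl} into Theorem~\ref{ConjOtilde} (resp.\ Conjecture~\ref{ConjectureO}) with $x=b^n$, and asserts this without further argument. In doing so it silently ignores denominators $q$ with $q\wedge b\neq 1$, which Theorem~\ref{MainThMahl} does not cover. Your reduction $q=q'q''\mapsto q'$ via $\times b$-invariance, together with the polynomial bound $(n\log_2 b+1)^{\omega(b)}$ on the number of $b$-smooth parts, is exactly the step needed to close that gap.
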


We start by establishing the following upper-bound
\begin{proposition}
Let $S \subset \mathbb{T}^1$ be a $\times$b invariant set and $\mathcal{N}\subset \mathbb{N}$ such that for any  $q \in\mathbb{N}$  ($q'$ divides $q$) $\Rightarrow$ $(q' \in \mathcal{N}).$ Assume that $$\lim_{n\to +\infty}\frac{\log \# Q_{b,n,\mathcal{N}}}{n}=0.$$
Then, for any non-increasing mapping $\psi:\mathbb{N}\to \mathbb{R}_+$, $$\dim_H \left\{x\in \mathbb{T}^1 : \ \vert x-\frac{p}{q}\vert\leq \psi(q)\text{ f.i.m. }q \in \mathcal{N},p \in\mathbb{Z}: \ \frac{p}{q}\in S\right\} \leq \frac{\overline{\dim}_B S}{\max\left\{1,\delta_{\psi}\right\}},$$
where $\overline{\dim}_B(S)$ denotes the upper box dimension of the set $S$. 
\end{proposition}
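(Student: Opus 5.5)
The plan is to combine a direct limsup covering argument, which gives the bound $\overline{\dim}_B S/\delta_{\psi}$, with the trivial bound $\dim_H \overline{S}\leq \overline{\dim}_B S$, which gives the other half of the minimum. Write $E$ for the set in the statement and $D=\overline{\dim}_B S$. We may assume $\psi(q)\to 0$. If $\psi$ does not tend to $0$, then $\delta_\psi=0$, the balls have radii bounded below, and the inequality can only be read with this convention.

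\emph{Reduction to reduced fractions.} Let $x\in E$ with infinitely many pairs $(p,q)$, $q\in\mathcal{N}$, $p/q\in S$, $|x-p/q|\leq\psi(q)$. Write $p/q=p'/q'$ in lowest terms. Then $q'\mid q$, so $q'\in\mathcal{N}$ by the divisor-closedness hypothesis. Also $q'\leq q$, so $\psi(q)\leq\psi(q')$ because $\psi$ is non-increasing. There are two cases.
\begin{itemize}
\item[(a)] Infinitely many distinct reduced fractions $p'/q'$ occur. Then $x$ lies in
\[
E'=\limsup \Big\{B\big(\tfrac{p'}{q'},\psi(q')\big):\ p'\wedge q'=1,\ q'\in\mathcal{N},\ \tfrac{p'}{q'}\in S\Big\}.
\]
\item[(b)] Only finitely many occur. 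Then $x$ is at distance at most $\psi(q)\to 0$ from one fixed rational, so $x\in\mathbb{Q}$.
\end{itemize}
Hence $E\subset E'\cup\mathbb{Q}$, and it suffices to bound $\dim_H E'$, since $\mathbb{Q}$ has dimension $0$.

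\emph{Counting.} Fix $\varepsilon>0$ and $n$ large. By definition the relevant denominators in $(b^{n-1},b^n]$ form the set $Q_{b,n,\mathcal{N}}(S)$, and the hypothesis gives $\#Q_{b,n,\mathcal{N}}(S)\leq b^{n\varepsilon}$. For a fixed $q$, the distinct points $p/q\in S$ are $1/q$-separated in $\mathbb{T}^1$. Their number is therefore at most a constant times the covering number of $S$ at scale $1/q$, which is at most $q^{D+\varepsilon}\leq b^{n(D+\varepsilon)}$ by the definition of the upper box dimension. So level $n$ contributes at most $b^{n(D+2\varepsilon)}$ balls, each of radius at most $\psi(b^{n-1})$.

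\emph{Bound $D/\delta_\psi$.} Assume $\delta_\psi>0$. By the definition \eqref{DefDeltaPsi} of $\delta_\psi$, we have $\psi(b^{n-1})\leq b^{-(n-1)(\delta_\psi-\varepsilon)}$ for all large $n$. For every $N_0$, the balls of levels $n\geq N_0$ cover $E'$. Hence
\[
\mathcal{H}^s_{t}(E')\leq \sum_{n\geq N_0} b^{n(D+2\varepsilon)}\,\big(2b^{-(n-1)(\delta_\psi-\varepsilon)}\big)^s,
\]
where $t=2\psi(b^{N_0-1})$. This is the tail of a convergent series as soon as $s(\delta_\psi-\varepsilon)>D+2\varepsilon$. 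Letting $N_0\to\infty$ gives $\mathcal{H}^s(E')=0$, so $\dim_H E'\leq (D+2\varepsilon)/(\delta_\psi-\varepsilon)$. Letting $\varepsilon\to 0$ gives $\dim_H E'\leq D/\delta_\psi$.

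\emph{Bound $D$.} Since $\psi\to 0$, every point of $E'$ is a limit of points of $S$, so $E'\subset\overline{S}$. Therefore $\dim_H E'\leq\overline{\dim}_B\overline{S}=D$. Combining the two bounds gives $\dim_H E\leq D/\max\{1,\delta_\psi\}$.

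I expect the main point of care to be the reduction step. It has two parts. The first is verifying that the divisor-closedness of $\mathcal{N}$ and the monotonicity of $\psi$ let us pass to reduced denominators without enlarging the set. The second is the separation count that bounds the number of points $p/q\in S$ per denominator through the box-counting function. The rest is a standard Borel--Cantelli-type covering.
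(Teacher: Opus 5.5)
Your proof is correct and follows essentially the same route as the paper's. You pass to reduced fractions using the divisor-closedness of $\mathcal{N}$ and the monotonicity of $\psi$, bound the number of numerators per denominator by a separation and upper-box-dimension count, bound the denominators in $(b^{n-1},b^n]$ by the hypothesis on $\#Q_{b,n,\mathcal{N}}(S)$, and sum the resulting covers. You are also slightly more careful than the written argument in three places:
\begin{itemize}
\item you use the correct radius $\psi(b^{n-1})$, whereas the paper's covering display uses $\psi(b^n)$, which goes the wrong way for non-increasing $\psi$;
\item you handle $\delta_\psi<1$ explicitly via $E'\subset\overline{S}$ instead of by a ``without loss of generality'';
\item you rightly note that the statement needs $\psi\to 0$.
\end{itemize}
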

The upper-bound for Theorem \ref{ThmMahlerConj} and Theorem \ref{MahlConjO} would then readily follow from Corollary  \ref{MainCoroMahl} with $\mathcal{N}=\mathcal{P}_N$ and $\mathcal{N}=\mathbb{N}.$

\begin{proof}
Without loss of generality, we may assume that $\delta_{\psi}\geq 1.$ Observe that, by monotonicity of $\psi,$ 
\begin{align*}
&\left\{x\in \mathbb{T}^1 : \ \vert x-\frac{p}{q}\vert\leq \psi(q)\text{ f.i.m. }q \in \mathcal{N},p \in\mathbb{Z}: \ \frac{p}{q}\in S\right\}\\
=&\left\{x\in \mathbb{T}^1 : \ \vert x-\frac{p}{q}\vert\leq \psi(q)\text{ f.i.m. }q \in \mathcal{N},p \in\mathbb{Z}, p\wedge q=1: \ \frac{p}{q}\in S\right\} .
\end{align*}
Fix $\varepsilon>0$ and $r>0$ small enough so that, for every set $\mathcal{C}$  of pairwise disjoint balls centered on $S$ of radii $0<\rho<r,$ $$\#\mathcal{C}\leq \Big(\frac{1}{\rho}\Big)^{\overline{\dim}_B S +\varepsilon}.$$
As for any, $q\in\mathbb{N},$ for any  $p_1 \neq p_2$ such that $\frac{p_1}{q},\frac{p_2}{q}\in S,$ one has $$B(\frac{p_1}{q},\frac{1}{2q})\cap B(\frac{p_2}{q},\frac{1}{2q})=\emptyset,$$
for every $q>\frac{2}{r},$ $$\#\left\{p\in\mathbb{Z}: \ \frac{p}{q}\in S\right\}\leq (2q)^{\overline{\dim}_B S+\varepsilon}  $$

Remark also that for any $N\in\mathbb{N}$,
\begin{align*}
&\left\{x\in \mathbb{T}^1 : \ \vert x-\frac{p}{q}\vert\leq \psi(q)\text{ f.i.m. }q \in \mathcal{N},p \in\mathbb{Z}, p\wedge q=1: \ \frac{p}{q}\in S\right\}\\
& \subset \bigcup_{n\geq N}\ \bigcup_{\substack{q\in Q_{b,n,\mathcal{N}}(S)\\ p:\ \frac{p}{q}\in S}}
B\Big(\frac{p}{q},\psi(b^n)\Big).
\end{align*}

Assume that Theorem~\ref{MainThMahl} holds. Fix $\varepsilon>0$ and choose $N_\varepsilon$ large enough so that for every $k\geq N_\varepsilon$ and every
$\widetilde{k}\geq \frac{\log N_\varepsilon}{\log b}$,
\[
\begin{cases}
\psi(k)\leq k^{-(1-\varepsilon)\delta_\psi}<r,\\
\#Q_{b,\widetilde{k},\mathcal{N}}(S)\leq b^{\varepsilon \widetilde{k}}.
\end{cases}
\]

Set
\[
s_\varepsilon=\frac{\overline{\dim}_B S+2\varepsilon}{\delta_\psi}+\varepsilon.
\]
Then,
\begin{align*}
\sum_{n\geq \frac{\log N_\varepsilon}{\log b}}\sum_{q\in Q_{b,n,\mathcal{N}}(S)}\sum_{p:\ \frac{p}{q}\in S}\psi(q)^{s_\varepsilon}
&\leq 2^{\overline{\dim}_B(S)+\varepsilon}\sum_{n\geq \frac{\log N_\varepsilon}{\log b}}
 b^{n(\overline{\dim}_B S +\varepsilon)}\, b^{\varepsilon n}\, b^{-n(1-\varepsilon)\delta_\psi s_\varepsilon}\\
&=2^{\overline{\dim}_B(S)+\varepsilon}\sum_{n\geq \frac{\log N_\varepsilon}{\log b}}
b^{-n(\delta_\psi s_\varepsilon-\overline{\dim}_B S-2\varepsilon)}\\
&\leq 2^{\overline{\dim}_B(S)+\varepsilon} \sum_{n\geq \frac{\log N_\varepsilon}{\log b}} b^{-n\varepsilon\delta_\psi}<+\infty.
\end{align*}
Hence 
$$\left\{x\in \mathbb{T}^1 : \ \vert x-\frac{p}{q}\vert\leq \psi(q)\text{ f.i.m. }q \in \mathcal{N},p \in\mathbb{Z}, p\wedge q=1: \ \frac{p}{q}\in S\right\}\leq   s_\varepsilon$$
and letting $\varepsilon\to 0$ yields the claim.

\end{proof}

\subsubsection{Proof of the lower-bound}
Notice that the lower-bound for Theorem \ref{MahlConjO} is implied by Corollary \ref{CorollaryLowB}. Thus we only prove the lower-bound required to establish Theorem \ref{ThmMahlerConj}. We focus first on item $(2)$ of Theorem \ref{ThmMahlerConj}. Let us fix $N\geq \omega(b).$ 
Let $r=\frac{p}{b^k}\in K_T$ be a $b$-adic number. Remark that,  for every $n\in\mathbb{N}$ and every $\underline{i}=(i_1,...,i_n)\in \left\{1,...,m\right\}^n,$ there exists $p' \in\mathbb{Z}$ such that $$f_{\underline{i}}(r)=\frac{p'}{b^{n+k}},$$
hence $$\omega(h(f_{\underline{i}}(r)))\leq \omega(b).$$

As $N \geq \omega(b),$
\begin{align*}
&\left\{x\in K_T : \ \vert x-\frac{p}{q}\vert\leq \psi(q)\text{ f.i.m. }q \in \mathcal{P}_{N},p \in\mathbb{Z}, p\wedge q=1: \ \frac{p}{q}\in K_T\right\}\\
&\supset \left\{x\in \mathbb{T}^1 : \ \vert x-\frac{p}{b^t}\vert\leq \psi(q)\text{ f.i.m. }t \in \mathbb{N},p \in\mathbb{Z},  \ \frac{p}{b^t}\in K_T\right\}\\
&=\limsup_{t \in \mathbb{N},p \in\mathbb{Z},  \ \frac{p}{b^t}\in K_T}B\Big(\frac{p}{b^t},\psi(b^t)\Big).
\end{align*}

Moreover, the argument provided in \cite[P.1821]{ED3} yields
$$\dim_H \limsup_{n \geq 1, \underline{i}=(i_1,...,i_n)\in \left\{1,...,m\right\}^n}B(f_{\underline{i}}(r),\psi(n))\geq \frac{\dim_H K_T}{\delta_{\psi}}$$
so that
$$\dim_H \left\{x\in K_T : \ \vert x-\frac{p}{q}\vert\leq \psi(q)\text{ f.i.m. }q \in \mathcal{P}_{N},p \in\mathbb{Z}, p\wedge q=1: \ \frac{p}{q}\in K_T\right\}\geq   \frac{\dim_H K_T}{\delta_{\psi}}.$$

We now establish item $(1):$

Without loss of generality, assume that $p_1 \neq 0$ (otherwise we can apply item $(2)$) and write $$r_0 =\pi\Big((1)_{\infty}\Big)=\sum_{k\geq 1}p_1 b^{-k}=\frac{p_1}{b-1}\in K_T \cap \mathbb{Q}.$$
For every  $n\in\mathbb{N}$ and every $\underline{i}=(i_1,...,i_n)\in \left\{1,...,m\right\}^n,$ there exists $p' \in\mathbb{Z}$ such that $$f_{\underline{i}}(r_0)=\frac{p'}{b^{n}(b-1)}.$$
Since $\omega(h( f_{\underline{i}}(r_0)))\leq \omega(b^n(b-1))= \omega(b)+\omega(b-1),$ the same argument as above yields the desired conclusion.

\medskip

In the next section, we establish Theorem \ref{ConjOtilde}.

\subsection{Proof of Theorem \ref{ConjOtilde}}
\label{SecConjOtilde}

 For $b\geq 2,$ $q\in\mathbb{N}$ with $q\wedge b=1,$ writing $q=\prod_{1\leq i\leq \omega(q)}p_i^{\alpha_i}$ the  decomposition in prime factors of $q$, we set 
$$\widetilde{O}_{q}(b)=\prod_{1\leq i\leq \omega(q)}O_{p_i^{\alpha_i}}(b).$$

One will show the following.

\begin{theoreme}
\label{ConjOtildebis}
Let $\varepsilon>0$ be a real number. For every $x \in\mathbb{N}$ large enough, $$\#\left\{0 \leq q \leq x : \ q\wedge b=1, \ \widetilde{O}_q(b)\leq q^{\varepsilon}\right\}\leq x^{2\varepsilon}.$$
\end{theoreme}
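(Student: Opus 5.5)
The plan is to parametrise each admissible $q$ by the orders of its prime-power components, and then count the tuples of orders. Write $q=\prod_{i=1}^{k}p_i^{\alpha_i}$ with $q\wedge b=1$, and set $d_i=O_{p_i^{\alpha_i}}(b)$, so that $\widetilde{O}_q(b)=\prod_i d_i$. The basic observation is that $p_i^{\alpha_i}\mid b^{d_i}-1$. Since the number of prime-power divisors $p^a$ (with $a\geq 1$) of an integer $n$ is $\Omega(n)\leq \log_2 n$, for each $d\geq 1$ we get
\[
\#\{p^a:\ O_{p^a}(b)=d\}\leq \Omega(b^d-1)\leq c\,d,\qquad c=\log_2 b .
\]
Moreover only finitely many prime powers have $d=1$ (they divide $b-1$), and they contribute at most a constant factor $C_b$. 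Hence $q$ is determined by the set $\{(d_i,\text{choice of }p_i^{\alpha_i})\}$, and
\[
\#\{q\leq x:\ q\wedge b=1,\ \widetilde{O}_q(b)\leq q^{\varepsilon}\}\leq C_b\sum_{\substack{\{d_1,\dots,d_k\},\ d_i\geq 2\\ \prod d_i\leq x^{\varepsilon}}}\ \prod_{i=1}^k c\,d_i ,
\]
where the sum runs over multisets, since distinct primes give distinct components.

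For the counting step I would use Rankin's trick. For $s>0$, bound the indicator $[\prod d_i\leq Y]$ by $(Y/\prod d_i)^{s}$ with $Y=x^{\varepsilon}$. The sum is then at most $Y^{s}\prod_{d\geq 2}\bigl(1+\sum_{j\geq1}(c d)^j d^{-js}\bigr)$, which converges as soon as $s>2$. This yields $x^{(2+\eta)\varepsilon}$, and the argument must be sharpened to reach exactly $x^{2\varepsilon}$.

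The main obstacle is precisely this loss of $\eta$. I would close it by combining the constraint $q\leq x$ with the order constraint, instead of using the order constraint alone. First, a prime $p$ with $O_p(b)\mid d$ satisfies $p\mid b^d-1$, and the number of such primes exceeding $y$ is at most $d\log b/\log y$. Second, $\omega(q)\leq (1+o(1))\log x/\log\log x$, and the $d_i$ are at least $2$ except in boundedly many cases, so $k\leq \varepsilon\log_2 x$. Splitting the components into those with $p_i^{\alpha_i}\leq \log x$ and the rest, the small ones give at most $x^{o(1)}$ choices, because there are at most $(\log x)^{O(1)}$ such prime powers and at most $\varepsilon\log_2 x$ of them can occur. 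For the large ones the weight $c\,d_i$ improves to $c\,d_i/\log\log x$. This gives enough room to run Rankin with $s=2$ exactly, up to $x^{o(1)}$ factors. Any leftover $x^{o(1)}$ is absorbed by applying the whole argument with $\varepsilon'$ slightly below $\varepsilon$, after first treating separately the $q\leq x^{1-\eta}$, which contribute trivially less. If this refinement fails, I would fall back on proving the bound $x^{(2+\eta)\varepsilon}$ for every $\eta>0$, which is all that Theorem \ref{ConjOtilde} needs.

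The passage to Theorem \ref{ConjOtilde} then uses $O_q(b)=\mathrm{lcm}_i\, d_i\geq \widetilde{O}_q(b)/\prod_{i<j}\gcd$-type losses. More directly, $\widetilde{O}_q(b)\leq O_q(b)^{\omega(q)}$, since each $d_i$ divides $O_q(b)$. So for $\omega(q)\leq N$, the condition $O_q(b)\leq q^{f(q)}$ implies $\widetilde{O}_q(b)\leq q^{Nf(q)}$, and the count is $x^{o(1)}$ by Theorem \ref{ConjOtildebis} applied with $\varepsilon\to 0$.
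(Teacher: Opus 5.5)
\textbf{Gap: the exponent $2\varepsilon$ is not reached.} The step that is supposed to bring $(2+\eta)\varepsilon$ down to exactly $2\varepsilon$ fails. The absorption trick cannot work. For $x^{1-\eta}<q\le x$, the hypothesis $\widetilde{O}_q(b)\le q^{\varepsilon}$ gives nothing better than $\widetilde{O}_q(b)\le x^{\varepsilon}$. A bound for the set defined with some $\varepsilon'<\varepsilon$ controls a smaller set, not the one in the statement.

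Nor can the leftover $x^{o(1)}$ be removed inside Rankin's method. With your improved weights $a(d)\le c'd$, where $c'=c/\log\log x$, the bound is $Y^{s}\exp\bigl(c'\sum_{d\le Y}d^{1-s}\bigr)$. Writing $u=(2-s)\log Y$, its exponent exceeds $2\log Y$ by about $c'\log Y\,(e^{u}-1)/u-u$. This is positive for every $s\le 2$ once $c'\log Y>1$, and $s>2$ is worse. Here $c'\log Y=c\varepsilon\log x/\log\log x\to\infty$. Reaching $x^{2\varepsilon}$ along these lines needs arithmetic input such as $\sum_p O_p(b)^{-t}<\infty$ for some $t<2$, i.e.\ $\#\{p:\ O_p(b)\le y\}\ll y^{2-\delta}$. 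Counting prime factors of $\prod_{d\le y}(b^d-1)$ gives only $y^2/\log y$.

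A smaller slip: choosing at most $\varepsilon\log_2 x$ among $(\log x)^{O(1)}$ small prime powers allows $x^{c(\varepsilon)}$ possibilities with $c(\varepsilon)>0$, not $x^{o(1)}$. The $x^{o(1)}$ there comes instead from having only $\pi(\log x)=O(\log x/\log\log x)$ primes available.

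\textbf{What you do prove.} Your argument establishes the fallback bound $C_\eta x^{(2+\eta)\varepsilon}$, after one repair. The factor $1+\sum_{j\ge1}(c\,d^{1-s})^{j}$ is infinite whenever $c\,d^{1-s}\ge1$ (e.g.\ $d=2$, $b\ge5$, $s$ near $2$). Since each of the $a(d)\le c\,d$ prime powers of order $d$ occurs at most once in $q$, the factor should be $(1+d^{-s})^{a(d)}\le\exp(c\,d^{1-s})$. The product then converges for $s>2$.

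\textbf{Comparison with the paper.} That fallback is, in substance, the paper's argument: both apply Rankin's trick to $\sum_{q\wedge b=1}\widetilde{O}_q(b)^{-t}$. The paper writes this as the Euler product $S_t$ and evaluates local factors with its lemma on $O_{p^t}(b)$. You instead group prime powers by their order via $\#\{p^a:\ O_{p^a}(b)=d\}\le\Omega(b^d-1)$, which needs no lifting lemma and makes the abscissa of convergence $2$ visible.

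The paper gets the exponent $2\varepsilon$ only by asserting $S_t<\infty$ for all $t>1$ and taking $1<t<2$. This rests on the estimate $\sum_{i\ge0}O_{p^i}(b)^{-t}\le\frac{u_p}{k_p^{t}}\sum_{k\ge0}p^{-kt}$, which is false. The $i=0$ term is $O_1(b)^{-t}=1$, as it must be for the product to expand into $\sum_n\widetilde{O}_n(b)^{-t}$, while the right-hand side tends to $0$ as $p\to\infty$. For $b=2$, $p=3$ the two sides differ by exactly $1$.

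Since the true local factor is at least $1+u_pk_p^{-t}$, convergence of $S_t$ requires $\sum_p u_pO_p(b)^{-t}<\infty$. The bound $\sum_{p:\,k_p=d}u_p\le\Omega(b^d-1)$ yields this only for $t>2$. Corrected, the paper's proof gives exactly your fallback. That weaker bound is all the deduction of Theorem \ref{ConjOtilde} uses, and your last paragraph reproduces that deduction correctly via $\widetilde{O}_q(b)\le O_q(b)^{\omega(q)}$.
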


Prior to proving Theorem \ref{ConjOtildebis}, we explain how it implies Theorem \ref{ConjOtilde}.

Remark that, for any $q\in\mathbb{N}$, $b\wedge q=1$ one has 
$$ O_{q}(b)\leq \widetilde{O}_q(b)\leq O_q(b)^{\omega(q)}.$$
Let $N\in\mathbb{N}$ be an integer. As $f(q)\to 0,$ for every large enough $q$ such that $\omega(q)\leq N$,  $$O_q(b)\leq q^{f(q)}\Rightarrow \widetilde{O}_q(b)\leq q^{N f(q)}\leq q^{\varepsilon}.$$
 Thus, Theorem \ref{ConjOtildebis} yields $$\limsup_{x\to +\infty}\frac{\#\left\{0 \leq q \leq x : \ q\wedge b=1, \ \omega(q)\leq N \ O_q(b)\leq q^{\varepsilon}\right\}}{\log x}\leq 2\varepsilon$$
and letting $\varepsilon\to 0$ proves Theorem \ref{ConjOtilde}.

We now prove Theorem \ref{ConjOtildebis} and we start by a lemma.

\begin{lemme}
Let $p$ be a prime number, coprime with $b$. Write $k_p=O_p(b)$ and $u_p=v_p(b^{k_p}-1),$ the valuation of $p$ in $b^{k_p}-1.$ Then, for every $t\in\mathbb{N},$ 
$$\begin{cases}O_{p^t}(b)=k_p\text{ if }t\leq u_p \\ O_{p^t}(b)=k_p p^{t-u_p}\text{ if }t\geq u_p.\end{cases} $$
\end{lemme}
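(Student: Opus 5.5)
The plan is to reduce everything to the element $a=b^{k_p}$, which is $\equiv 1 \bmod p$, and then use the ``lifting the exponent'' valuation formula
\[
v_p(a^m-1)=v_p(a-1)+v_p(m)=u_p+v_p(m)\qquad (m\in\mathbb{N}).
\]

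First, I will reduce the order of $b$ to an order of $a$. If $b^j\equiv 1 \bmod p^t$ for some $t\geq 1$, then $b^j\equiv 1\bmod p$, so $k_p\mid j$. Hence $O_{p^t}(b)$ is a multiple of $k_p$. Writing $j=k_p m$, we have $b^j\equiv 1 \bmod p^t$ if and only if $a^m\equiv 1\bmod p^t$. Therefore
\[
O_{p^t}(b)=k_p\cdot \min\{m\geq 1:\ v_p(a^m-1)\geq t\}.
\]

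Second, I will prove the valuation formula above. This is the main step.
\begin{itemize}
\item For $p\nmid m$: write $a^m-1=(a-1)(a^{m-1}+\dots+1)$. Since $a\equiv 1\bmod p$, the second factor is $\equiv m\not\equiv 0 \bmod p$, so $v_p(a^m-1)=v_p(a-1)$.
\item For $m=p$: write $a=1+p^{u}c$ with $u=u_p\geq 1$ and $p\nmid c$. Expanding binomially,
\[
a^p-1=p^{u+1}c+\binom p2 p^{2u}c^2+\dots+p^{pu}c^p.
\]
For $p$ odd, $p\mid\binom p2$, so every term after the first has valuation at least $u+2$. For the last term this needs $pu\geq u+2$, which holds when $p\geq 3$. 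Hence $v_p(a^p-1)=u+1$.
\item Applying this with $a^p$ in place of $a$ (and $u+1$ in place of $u$) and inducting on $v_p(m)$ gives the general formula.
\end{itemize}

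Third, I will conclude. Since $a^m\equiv 1\bmod p^t$ if and only if $u_p+v_p(m)\geq t$, the minimal $m$ is $1$ when $t\leq u_p$ and $p^{t-u_p}$ when $t\geq u_p$. Multiplying by $k_p$ gives both cases of the statement; they agree at $t=u_p$.

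The main obstacle is the prime $p=2$, where the binomial step breaks down because $\binom 22 2^{2u}$ can have valuation exactly $u+1$ when $u=1$. Indeed, the statement is false as written there: for $b=3$, $p=2$ we have $k_2=1$, $u_2=1$, yet $O_8(3)=2\neq 4$.

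I would therefore state and prove the lemma for odd $p$. For $p=2$ I would add the correction that the formula holds with $u_2$ replaced by $v_2(b^2-1)-1$ once $t\geq 3$ (when $b\equiv 3\bmod 4$). This does not affect the subsequent use of the lemma: there one only needs that $O_{p^t}(b)$ grows like $p^{t-O(1)}$, with the constant depending on $b$ and $p$.
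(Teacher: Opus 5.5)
Your proof is correct for odd $p$ and follows the paper's route: the key step in both is that expanding $(1+mp^{u_p})^p$ shows each multiplication of the exponent by $p$ raises the $p$-adic valuation by exactly one. Your objection at $p=2$ is also right, and it locates the flaw in the paper's proof: the paper's identity $b^{pk_p}=1+m^pp^{u_p+1}+\gamma p^{u_p+2}$ fails when $p=2$ and $u_2=1$ (e.g.\ $b=3$, where $O_8(3)=2$, not $4$).

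Your proposed repair needs tightening. For $b\equiv 3 \pmod 4$, $t\geq 2$ and $w=v_2(b^2-1)$, one has $O_{2^t}(b)=2^{\max\{1,\,t-w+1\}}$. So replacing $u_2$ by $w-1$ is valid only for $t\geq w$, not for all $t\geq 3$: for $b=7$, $w=4$ and $O_8(7)=2$, whereas the corrected formula gives $1$.
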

\begin{proof}

For $t\leq u_p,$   $p^t \vert b^{k_p}-1$ and $O_{p^t}(b)\geq k_p$ yields $$O_{p^t}(b)=k_p.$$
Let us now fix $t>u_p.$ Notice that 
\begin{align*} 
b^{k_p}=1+m p^{u_p},
\end{align*}
 with $m \wedge p=1$ (otherwise $p^{u_p+1}\vert b^{k_p}-1$). Hence, there exists $\gamma \in\mathbb{N} $ such that
 \begin{align}
 \label{Equarec}
 b^{pk_p}=1+m^p p^{u_p+1} +\gamma p^{u_p+2}.
\end{align}  
This yields $O_{p^{u_p+1}}(b) \vert pk_p.$ Since  $O_{p^{u_p+1}}(b)>k,$  $O_{p^{u_p+1}}(b)=pk_p.$ Moreover, in \eqref{Equarec},  $m^p \wedge p=1,$ so that the same argument applies to $O_{p^{u_p+2}}(b)$ and, recursively, to any $O_{p^{k}}(b)$, $k\geq u_p$.
\end{proof}

Call $\mathcal{P}^b$ the set of prime numbers coprime with $b$. For $t\in\mathbb{R},$ we set, when defined, 

$$S_t =\prod_{p\in\mathcal{P}^b}\Big(\sum_{k\geq 0}\frac{1}{O_{p^k}(b)^t}\Big),$$
which develops in
 $$S_t =\sum_{n\geq 1, \ n\wedge b=1}\frac{1}{\widetilde{O}_n (b)^t}.$$
For every $p$ prime, write again $k_p =O_p(b)$ and $u_p=v_p(b^{k_p}-1).$ Then $$\sum_{i\geq 0}\frac{1}{O_{p^i}(b)^t}=\sum_{k=0}^{u_p} \frac{1}{k_p ^t}+\sum_{i\geq u+1}\frac{1}{(k_p p^{i-u})^t}\leq \frac{u_p}{k_p ^t}\Big(\sum_{k\geq 0}\frac{1}{p^{kt}}\Big).$$
As $p^{u_p}\vert b^{k_p} -1$, for every $p>b$, $u_p \leq k_p$. Hence, for any such $p$ $$\frac{u_p}{k_p ^t}\Big(\sum_{k\geq 0}\frac{1}{p^{kt}}\Big)\leq  \sum_{k\geq 0}\frac{1}{p^{kt}}$$
so that $S_t$ converges for every $t>1$.

Fix $1<t<2$ and assume that, for $x\in\mathbb{N},$
$$\#\left\{1 \leq n\leq x : \ n\wedge b=1, \  \widetilde{O}_n(b)\leq n^{\varepsilon}\right\}> x^{2\varepsilon}.$$
Then $$\sum_{1 \leq n \leq x : \ n \wedge b =1} \frac{1}{\widetilde{O}_n(b)^t} \geq \frac{x^{2 \varepsilon}}{x^{t\varepsilon}}=x^{2\varepsilon -t\varepsilon}.$$
As $x^{(2-t)\varepsilon}\to +\infty,$ this happens for an at most finite set of $x$.

\medskip

We now proceed to prove Theorem~\ref{MainThMahl}. The proof is based on the following observation. Fix $\frac{p}{q}\in S$ with $p\wedge q=1$ and $q\wedge b=1$. On the one hand, since $S$ is $\times b$ invariant, for any $k\in\mathbb{N}$ we have $b^k\frac{p}{q}\in S$; on the other hand, the sequence $b^k p$ should be equidistributed modulo $q$, unless the multiplicative order of $b$ modulo $q$ is very small (see Theorem \ref{ThmBourg} below). Since the complement of $S$ contains an interval, only those $q$ for which $b$ has small multiplicative order $\text{mod } q$ can satisfy $\frac{p}{q}\in S$.
\smallskip

In the next subsection, we recall basic definitions and tools related to the discrepancy of sequences on $\mathbb{T}^1$. We then establish Theorem~\ref{MainThMahl}.

\subsection{Recall on equidistribution}
\label{SecRecEqui}
We begin by recalling the definition of discrepancy.

\begin{definition}
Let $U\subset \mathbb{T}^1$ be a finite set with $\#U=N<+\infty$. The discrepancy of $U$ is defined by
\[
D(U)=\sup_{0<a<b<1}\Big| \frac{\#\{u\in U:\ u\in [a,b]\}}{N}-(b-a)\Big|.
\]
\end{definition}

\begin{remark}
From the definition, it is immediate that if $D(U)\leq \eta$ and $J\subset \mathbb{T}^1$ is an interval with $\mathcal{L}(J)>\eta$, then there exists $u\in U$ such that $u\in J$.
\end{remark}

We also recall the Erd\H{o}s--Tur\'an inequality.

\begin{theoreme}[\cite{ErdTur}]
\label{ErdosTuran}
Let $U$ be a finite sequence with $\#U=N$. Then, for any $A\in\mathbb{N}$,
\[
D(U)\leq C\Big(\frac{1}{A}+\sum_{1\leq j\leq A}\frac{1}{Nj}\Big|\sum_{u\in U}\exp(2\pi i j u)\Big|\Big).
\]
\end{theoreme}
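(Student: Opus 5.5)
The plan is the classical Fourier-analytic argument: sandwich the indicator of an interval between two trigonometric polynomials of degree at most $A$, then sum over $U$. Fix $0<a<b<1$ and write $I=[a,b]$, $e(x)=\exp(2\pi i x)$ and $\sigma_j=\sum_{u\in U}e(ju)$. The whole proof reduces to producing real trigonometric polynomials $P^{+}_A,P^{-}_A$ of degree $\le A$ such that
\[
P^{-}_A\le \mathbf 1_I\le P^{+}_A \ \text{on }\mathbb{T}^1,\qquad \widehat{P^{\pm}_A}(0)=(b-a)\pm\frac{1}{A+1},\qquad |\widehat{P^{\pm}_A}(j)|\le \frac{1}{A+1}+\frac{1}{\pi |j|}\ \ (0<|j|\le A).
\]
These are the Beurling--Selberg polynomials. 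They can be built from Vaaler's trigonometric approximation of the sawtooth function $x\mapsto \{x\}-\tfrac12$, since $\mathbf 1_I(x)=(b-a)+\psi(a-x)-\psi(b-x)$ away from the endpoints. I would quote this construction rather than reprove it.

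Given these polynomials, the upper estimate is obtained by averaging $\mathbf 1_I\le P^{+}_A$ over $U$ and expanding in Fourier series:
\[
\frac{\#\{u\in U: u\in I\}}{N}\le \frac1N\sum_{u\in U}P^{+}_A(u)=\widehat{P^{+}_A}(0)+\sum_{0<|j|\le A}\widehat{P^{+}_A}(j)\,\frac{\sigma_j}{N}.
\]
Using $\sigma_{-j}=\overline{\sigma_j}$, the coefficient bound, and $\frac{1}{A+1}\le \frac1j$ for $1\le j\le A$, this gives
\[
\frac{\#\{u\in U: u\in I\}}{N}-(b-a)\le \frac{1}{A+1}+\sum_{1\le j\le A}\frac{2(1+\pi^{-1})}{j}\frac{|\sigma_j|}{N}.
\]
Replacing $P^{+}_A$ by $P^{-}_A$ yields the same bound for $(b-a)-\frac{\#\{u\in U: u\in I\}}{N}$. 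Taking the supremum over $0<a<b<1$ gives the statement with an absolute constant $C$ (for instance $C=2(1+\pi^{-1})$).

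The main obstacle is the existence of the majorant and minorant with the sharp coefficient decay $O(1/|j|)$ uniformly in the interval. If I wanted to avoid Vaaler's theorem, I would use the more elementary Fejér-smoothing route. Let $F_A$ be the Fejér kernel, which is nonnegative with $\widehat{F_A}(j)=(1-\frac{|j|}{A+1})_+$. Convolve $F_A$ with the indicator of the enlarged interval $[a-\eta,b+\eta]$, where $\eta\asymp 1/A$ is chosen large enough that the convolution dominates $\mathbf 1_I$ up to the mass of $F_A$ outside $[-\eta,\eta]$. That mass is $O(1/(A\eta))$, so its contribution can be absorbed into the $\frac1A$ term after using the convolution itself. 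The Fourier coefficients of the result are bounded by $\min\{b-a+2\eta,\frac{1}{\pi|j|}\}$. This gives the same inequality with a worse but still absolute constant, which is all the statement requires. The only delicate point in this route is handling the tail of $F_A$ by a bootstrap, namely bounding the count in the neighbourhood of the interval by the smoothed count, and this is where I expect most of the care to be needed.
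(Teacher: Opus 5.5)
The paper gives no proof of this inequality. It is imported from Erd\H{o}s--Tur\'an and used only as a black box, with an unspecified absolute constant, in the proof of Proposition~\ref{LEmmaqepsi}. Your main argument is the standard Selberg--Vaaler proof (as in Montgomery's \emph{Ten lectures}), and it is correct:
\begin{itemize}
\item the identity $\mathbf 1_I(x)=(b-a)+\psi(a-x)-\psi(b-x)$ does hold off the endpoints;
\item Vaaler's bound $|\psi-V_A|\le\frac{1}{2A+2}F_A$ gives $P^{\pm}_A=(b-a)+V_A(a-\cdot)-V_A(b-\cdot)\pm\frac{1}{2A+2}\big(F_A(a-\cdot)+F_A(b-\cdot)\big)$, with exactly the zeroth coefficient and the bound $\frac{1}{A+1}+\frac{1}{\pi|j|}$ you state;
\item continuity of trigonometric polynomials handles the endpoints, so closed versus open intervals is irrelevant;
\item $\frac{1}{A+1}\le\frac1j$ then gives $C=2(1+\pi^{-1})<3$.
\end{itemize}
What this route buys is a one-sided bound for each individual interval, with a good constant and no reference to $D(U)$. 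The price is quoting the extremal trigonometric approximation.

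Your Fej\'er-kernel fallback needs one correction. With $\eta\asymp 1/A$, the mass $\epsilon$ of $F_A$ outside $[-\eta,\eta]$ is $\lesssim 1/(A\eta)$. That is a small \emph{constant}, not $O(1/A)$, so it cannot be absorbed into the $\frac1A$ term. Rescaling $G=F_A*\mathbf 1_J$, $J=[a-\eta,b+\eta]$, by $(1-\epsilon)^{-1}$ also fails, because it leaves an error $\epsilon(b-a)$.

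The tail must instead be absorbed into the left-hand side. Let $\nu$ be the empirical measure of $U$. Then $\frac1N\sum_u G(u)=\int F_A(t)\,\nu(J+t)\,dt$, where $\nu(J+t)\ge\nu(I)$ for $|t|\le\eta$. Also $\nu(J+t)\ge|J|-D(U)$ for every $t$, since arcs through $0$ are complements of intervals. Arguing symmetrically with $[a+\eta,b-\eta]$ for the lower bound gives $D(U)\le 2\eta+\frac{E+\epsilon D(U)}{1-\epsilon}$, with $E=\sum_{0<|j|\le A}\frac{|\sigma_j|}{\pi|j|N}$. Choosing $\eta=c/A$ with $c$ large enough that $\epsilon\le\frac13$ then yields $D(U)\le 4\eta+3E$. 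Equivalently, you can take the smoothing scale proportional to $D(U)$ itself, as in the classical Fej\'er-kernel proof.

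This route is more elementary, needing only positivity and tail decay of $F_A$. It controls only the supremum $D(U)$, through this self-improving step, and gives worse constants.
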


\subsection{Proof of Theorem~\ref{MainThMahl}}
\label{SecProMal}
We fix $b$ and $S$ as in the theorem, and denote by $\ell>0$ the length of an interval
$I\subset \mathbb{T}^1\setminus S$. By considering a larger constant $C$ in Theorem \ref{ErdosTuran} if necessary, we may assume that $\frac{C}{2\ell}\in\mathbb{N}$.

Given $q\in\mathbb{N}$, we write $\mathbb{Z}_q=\mathbb{Z}/q\mathbb{Z}$ and $\mathbb{Z}_q^*$ for its group of invertible elements.
For $a\in\mathbb{Z}_q^*$, we denote by $o_q(a)$ the multiplicative order of $a$ modulo $q$.

We first recall the following result due to Bourgain.

\begin{theoreme}[\cite{BourgainSumExp}]
\label{ThmBourg}
Let $q\in\mathbb{N}$ be an integer and let $H$ be a subgroup of $\mathbb{Z}_q^*$. There exists $C>0$ such that, provided that $q$ is large enough,
\begin{equation}
\label{equabourgain}
\#H\geq q^{\frac{C}{\sqrt{\log \log \log q}}}
\quad\Longrightarrow\quad
\max_{p\in\mathbb{Z}_q^*}
\Big|\sum_{h\in H}\exp\Big(2\pi i \frac{hp}{q}\Big)\Big|
\leq o_{q\to+\infty}(1)\#H .
\end{equation}
\end{theoreme}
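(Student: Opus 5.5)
The plan is to reduce the pointwise bound in \eqref{equabourgain} to a moment estimate, and then to control that moment by a sum--product growth argument in the ring $\mathbb{Z}_q$. This is the strategy of \cite{BourgainSumExp}; here I only indicate the architecture.

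\textbf{Step 1: reduction to additive energy.} Write
\[
S(a)=\sum_{h\in H}\exp\Big(2\pi i \frac{ah}{q}\Big).
\]
Since $H$ is a multiplicative group, $S(ah')=S(a)$ for every $h'\in H$. Hence $|S|$ is constant on each coset $aH$, which has $\#H$ elements. By Parseval, for every $k\in\mathbb{N}$,
\[
\#H\,|S(a)|^{2k}\leq \sum_{c\in\mathbb{Z}_q}|S(c)|^{2k}=q\,E_k(H),
\]
where $E_k(H)$ is the number of solutions of $h_1+\dots+h_k=h_1'+\dots+h_k'$ with all $h_j,h_j'\in H$. It therefore suffices to find $k=k(q)$ with
\[
E_k(H)\leq o(1)^{2k}\,\frac{(\#H)^{2k+1}}{q}.
\]
This says that the $k$-fold sums of $H$ are close to uniformly distributed modulo $q$.

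\textbf{Step 2: growth of the $k$-fold sumset.} To get the near-uniformity in Step 1, I would run a sum--product iteration. Set $A=kH$ (the $k$-fold sumset). It is invariant under multiplication by $H$, so $A\cdot H\subset A$. The aim is to show that, as long as $A$ has not yet spread over $\mathbb{Z}_q$, each step enlarges it by a factor $q^{\eta}$: either $A+A$ or $A\cdot A$ is much larger than $A$. A Plünnecke--Ruzsa argument converts this growth into a bound on the additive energy. Iterating about $1/\eta$ times reaches density close to $1$. Keeping track of the losses in $\eta$ through the iteration is what produces the threshold $\#H\geq q^{C/\sqrt{\log\log\log q}}$ rather than $q^{\delta}$ for a fixed $\delta$.

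\textbf{Main obstacle.} The hard step is the sum--product theorem in $\mathbb{Z}_q$ for composite $q$. There, growth can fail because $A$ may concentrate near a proper subring or ideal: for example, $H$ could sit inside $1+d\mathbb{Z}_q$ for a large divisor $d$ of $q$. In that case $S(a)$ genuinely need not be small, so the argument must show that a subgroup of the given size cannot concentrate in this way. My first approach would be a multiscale decomposition along the divisors $d\mid q$, applied to the projections of $H$ to $\mathbb{Z}_d$:
\begin{itemize}
\item at each scale, either $H$ has large image and one applies the prime-field Bourgain--Glibichuk--Konyagin estimate via the Chinese remainder theorem;
\item or $H$ is concentrated, and one passes to the quotient and repeats.
\end{itemize}
The number of relevant scales is of order $\log\log q$, and balancing the losses over these scales should give the stated exponent. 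I expect making this dichotomy quantitative to be the core difficulty, so for the present paper I would take the result from \cite{BourgainSumExp} as stated rather than reprove it.
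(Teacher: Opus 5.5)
The paper gives no proof of this statement. It quotes it from \cite{BourgainSumExp} and uses it as a black box, so your final decision to cite it is exactly what the paper does. The architecture you sketch, however, would not work as written, and it breaks already at Step 1.

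Bounding $\#H\,|S(a)|^{2k}$ by the full moment $\sum_{c\in\mathbb{Z}_q}|S(c)|^{2k}=q\,E_k(H)$ lets in the non-unit frequencies. For composite $q$, more than $\#H$ of these can have $|S(c)|=\#H$ even when the theorem holds. Take primes $p<p'<2p$, $q=pp'$ and $H=\{x\in\mathbb{Z}_q^*:\ x\equiv 1 \bmod p'\}$, so $\#H=p-1\asymp q^{1/2}$.
\begin{itemize}
\item For each of the $p'$ frequencies $c\in p\mathbb{Z}_q$ one has $|S(c)|=\#H$. Hence $E_k(H)\geq p'(\#H)^{2k}/q$.
\item Your sufficient condition $E_k(H)\leq \varepsilon^{2k}(\#H)^{2k+1}/q$ would therefore force $p'\leq\varepsilon^{2k}(p-1)$. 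This is false for every $k$ and every $\varepsilon\leq 1$.
\item Yet for $a\in\mathbb{Z}_q^*$ the Chinese remainder theorem gives $|S(a)|=\bigl|\sum_{x\in\mathbb{F}_p^*}\exp(2\pi i a'x/p)\bigr|=1$, with $a'\in\mathbb{F}_p^*$ determined by $a$.
\end{itemize}
So Step 1 reduces the theorem to a statement that is false in general: here $kH$ stays in the coset $k+p'\mathbb{Z}_q$ for every $k$ and never equidistributes. A workable reduction must keep the moment restricted to $c\in\mathbb{Z}_q^*$. Equivalently, it must strip off the contributions of the projections of $H$ to the quotients $\mathbb{Z}_d$, $d\mid q$, and the non-concentration input has to be phrased for those projections.

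Your \emph{main obstacle} paragraph misplaces the role of concentration in the same way. If $H\subset 1+d\mathbb{Z}_q$, what is lost is cancellation at the $d$ non-unit frequencies $c\in(q/d)\mathbb{Z}_q$, where $|S(c)|=\#H$. At units $a$, smallness of $S(a)$ under the size hypothesis is exactly what the theorem asserts, so $S(a)$ does not \emph{genuinely need not be small} there. Nor can the argument show that subgroups of the given size do not concentrate: the example above has size $\asymp q^{1/2}$ and lies in $1+p'\mathbb{Z}_q$. Your second bullet, passing to the quotient, is the right instinct, but it contradicts that sentence.

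Finally, applying the prime-field Bourgain--Glibichuk--Konyagin bound via the Chinese remainder theorem does not cover the general case, for two reasons:
\begin{itemize}
\item The sum factors along $q=\prod p_i^{\alpha_i}$ only when $H$ is the direct product of its local images. This fails for graph-type subgroups $\{(x,\phi(x))\}$, with $\phi$ an isomorphism between subgroups of $\mathbb{F}_{p_1}^*$ and $\mathbb{F}_{p_2}^*$.
\item Components with $\alpha_i\geq 2$ are not reached by estimates over $\mathbb{F}_p$. Already the subgroup of order $p-1$ in $\mathbb{Z}_{p^2}^*$ (the $p$-th powers) leads to Heilbronn sums.
\end{itemize}
Handling these cases is the role of Bourgain's sum--product theory in the ring $\mathbb{Z}_q$.
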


We prove the following proposition.

\begin{proposition}
\label{LEmmaqepsi}
For any large enough $n\in\mathbb{N}$ and any
$q\in Q_{b,n}(S)$ with $q\wedge b=1$, there exists
$1\leq t\leq \frac{2C}{\ell}$  such that
\[
O_{q/(q\wedge t)}(b)<\Big(\frac{q}{q\wedge t}\Big)^{f(\frac{q}{q\wedge t})},
\]
where $f(p)=\frac{C}{\sqrt{\log \log \log p}},$ with $C$ as in Theorem \ref{ThmBourg}. 
\end{proposition}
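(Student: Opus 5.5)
We argue by contradiction, using the orbit of $p/q$ under multiplication by $b$ together with the Erd\H{o}s--Tur\'an inequality (Theorem \ref{ErdosTuran}) and Bourgain's bound (Theorem \ref{ThmBourg}).

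Fix $q\in Q_{b,n}(S)$ with $q\wedge b=1$, and $p$ with $p\wedge q=1$ and $\frac{p}{q}\in S$. Since $S$ is $\times b$-invariant, the finite set
\[
U=\Big\{\tfrac{b^k p}{q} \bmod 1:\ 0\leq k<O_q(b)\Big\}
\]
is contained in $S$. It has $N=O_q(b)$ elements and is the orbit $\frac{p}{q}H$, where $H=\langle b\rangle\subset\mathbb{Z}_q^*$. The interval $I\subset\mathbb{T}^1\setminus S$ has length $\ell$ and contains no point of $U$. By the Remark following the definition of discrepancy, this forces $D(U)\geq \ell$.

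Apply Theorem \ref{ErdosTuran} with $A=\frac{2C}{\ell}$, an integer by our normalisation. This gives
\[
\ell\leq D(U)\leq \frac{\ell}{2}+C\sum_{1\leq j\leq A}\frac{1}{jN}\Big|\sum_{h\in H}\exp\Big(2\pi i\frac{jph}{q}\Big)\Big|.
\]
Hence, with $c_\ell=\frac{\ell}{2C(1+\log A)}>0$, there is some $1\leq t\leq \frac{2C}{\ell}$ such that
\[
\Big|\sum_{h\in H}\exp\Big(2\pi i\frac{tph}{q}\Big)\Big|\geq c_\ell\, N .
\]

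Now reduce the fraction. Put $d=q\wedge t$ and $q'=q/d$. Then $\frac{tp}{q}=\frac{(t/d)p}{q'}$, and $(t/d)p$ is coprime to $q'$. Reducing mod $q'$, the map $H\to H'=\langle b\rangle\subset\mathbb{Z}_{q'}^*$ is a surjective homomorphism. Each fibre has size $N/\#H'$, where $\#H'=O_{q'}(b)$. Therefore
\[
\Big|\sum_{h'\in H'}\exp\Big(2\pi i\frac{(t/d)p\,h'}{q'}\Big)\Big|\geq c_\ell\,\#H'.
\]

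Since $d\leq t\leq\frac{2C}{\ell}$ and $q>b^{n-1}$, we get $q'\geq q\ell/(2C)\to\infty$ as $n\to\infty$. Suppose $O_{q'}(b)\geq q'^{f(q')}$. Then Theorem \ref{ThmBourg} applied to $H'$ bounds the left-hand side by $o_{q'\to\infty}(1)\#H'$. For $n$ large this is smaller than $c_\ell\#H'$, uniformly over the finitely many $t$, which is a contradiction. Hence $O_{q/(q\wedge t)}(b)<(q/(q\wedge t))^{f(q/(q\wedge t))}$.

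The main obstacle is the reduction from $q$ to $q'$ when $t$ shares factors with $q$. One must check carefully that the fibres have equal size, so that the normalisation by $\#H'$ is legitimate. One must also check that the $o(1)$ in Bourgain's theorem is uniform in the numerator and depends only on $q'$, which tends to infinity. The theorem as stated, with a maximum over $p$, provides exactly this uniformity.
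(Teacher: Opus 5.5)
Your proof is correct and follows the paper's argument. It uses the orbit $U\subset S$, Erd\H{o}s--Tur\'an with $A=2C/\ell$, the reduction of each frequency to the modulus $q/(q\wedge t)$ with equal-size fibres, and the fact that Bourgain's bound is uniform over numerators. The only difference is cosmetic: you isolate one good frequency $t$ by pigeonhole (via $\sum_{j\le A}1/j\le 1+\log A$), whereas the paper assumes the order bound fails for every $t\le 2C/\ell$ and sums the $o(1)$ terms to get $D(U)<\ell$.
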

Notice that for every $m\in\mathbb{N}$, for every $q$ large enough, $f(\frac{q}{m})\leq 2f(q).$ Moreover, as, for any $q\in\mathbb{N},$ $q\wedge b=1,$ for any $p$ dividing $q$ $$O_{q}(b)\leq O_{q/p}(b)\times O_p(b)\leq pO_{q/p}(b),$$

 Proposition \ref{LEmmaqepsi} implies, with the notation involved, that for $q$ large enough, $$O_{q}(b)\leq \frac{\frac{2C}{\ell}}{(q\wedge t)^{f(\frac{q}{q\wedge t})}}q^{f(\frac{q}{q\wedge t})}\leq q^{3 f(q)},$$
yielding Theorem \ref{MainThMahl}.

\begin{proof}
Assume on the contrary that for every $1\leq t\leq \frac{2C}{\ell}$,
\[
o_{q/(q\wedge t)}(b)\geq \Big(\frac{q}{q\wedge t}\Big)^{f(\frac{q}{q\wedge t})} .
\]
Since $q\in Q_{b,n}(S)$, there exists $p\in\mathbb{Z}_q^*$ such that $\frac{p}{q}\in S$.
As $S$ is $\times b$-invariant, this implies that
\[
U=\Big\{u_k:=\frac{b^k p}{q}\Big\}_{0\leq k\leq o_q(b)-1}\subset S .
\]

Applying  Erd\H{o}s--Tur\'an 's inequality (Theorem \ref{ErdosTuran}) with $A=\frac{2C}{\ell}$, we obtain
\[
D(U)\leq \frac{\ell}{2}
+\sum_{1\leq h\leq \frac{2C}{\ell}}
\frac{1}{h O_q(b)}
\Big|\sum_{0\leq k\leq O_q(b)-1}
\exp\Big(2\pi i \frac{hp b^k}{q}\Big)\Big|.
\]

For $1\leq h\leq \frac{2C}{\ell}$, set $q'=\frac{q}{h\wedge q}$ and $h'=\frac{h}{h\wedge q}$. Then
\[
\Big|\sum_{0\leq k\leq O_q(b)-1}\exp\Big(2\pi i \frac{hp b^k}{q}\Big)\Big|
=
\Big|\frac{O_q(b)}{O_{q'}(b)}
\sum_{0\leq k\leq O_{q'}(b)-1}
\exp\Big(2\pi i \frac{b^k p h'}{q'}\Big)\Big|.
\]
Hence
\[
\frac{1}{O_q(b)}\Big|\sum_{0\leq k\leq O_q(b)-1}\exp\Big(2\pi i \frac{hp b^k}{q}\Big)\Big|
=
\frac{1}{O_{q'}(b)}
\Big|\sum_{0\leq k\leq O_{q'}(b)-1}
\exp\Big(2\pi i \frac{b^k ph'}{q'}\Big)\Big|.
\]

Since $O_{q'}(b)\geq {q'}^{f(q')}$, by Theorem \ref{ThmBourg}, provided that $q$ is large enough, 
\[
\frac{1}{O_{q'}(b)}
\Big|\sum_{0\leq k\leq O_{q'}(b)-1}
\exp\Big(2\pi i \frac{b^k ph'}{q'}\Big)\Big|
\leq o_{q\to+\infty}(1).
\]
Therefore,
\[
D(U)\leq \frac{\ell}{2}+\Big(\frac{2C}{\ell}\Big)^{2}\,o_{q\to+\infty}(1).
\]
For $q$ large enough this implies $D(U)<\ell$, hence there exists
$0\leq k\leq O_q(b)-1$ such that $u_k\in I\subset \mathbb{T}^1\setminus S$, yielding a contradiction.
\end{proof}

\section{Proof of Theorem~\ref{ThmIntrin}}
\label{SecUP}
\subsection{Approximation by eventually periodic sequences}

Fix $T=\{f_1,\dots,f_m\}$ a self-similar IFS, let $K$ be its attractor, and let
$0<c_1\leq \cdots \leq c_m<1$ be the contraction ratios of $f_1,\dots,f_m$.
Theorem~\ref{ThmIntrin} will be derived from the following general result.

\begin{theoreme}
\label{ThmUPAWSC}
Let $\eta:\mathbb{R}_+\to \mathbb{R}_+$ be non-decreasing and assume that $T$ satisfies AWSC. Writing
\[
\delta_{\eta}=\liminf_{r\to 0^+}\frac{\log \eta(r)}{\log r},
\]
one has
\begin{align*}
\dim_H \Big\{y\in K :\ &
\big\| y-\pi\Big((i_1,\dots,i_n,(i_{k+1},\dots,i_n)_\infty)\Big)\big\|_\infty\\
&\leq \eta\Big(\Big\vert f_{(i_1,\dots,i_n)}(K)\Big\vert\Big)\ \text{i.o.}\Big\}
=
\frac{\dim_H K}{\max\{1,\delta_\eta\}},
\end{align*}
where i.o.\ means that the inequality holds for infinitely many words
$(i_1,\dots,i_n)\in \bigcup_{p\geq 1}\{1,\dots,m\}^p$ and $1\leq k\leq n$.
\end{theoreme}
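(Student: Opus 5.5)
We prove the two inequalities separately, and we may assume $\delta_\eta\geq 1$: if $\delta_\eta<1$ the set is trivially contained in $K$, and the lower bound for $\delta_\eta<1$ follows from the case $\delta_\eta=1$ by monotonicity, since $\eta$ dominates $r\mapsto r$ along small scales. Write $s=\dim_H K$ and denote by $E_\eta$ the set in the statement.

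\textbf{Upper bound.} Every approximating point $\pi((i_1,\dots,i_n,(i_{k+1},\dots,i_n)_\infty))$ lies in $f_{(i_1,\dots,i_n)}(K)$. We group the pairs $(\underline{i},k)$ according to the scale $\rho=|f_{\underline{i}}(K)|$. Fix $\varepsilon>0$ and work at dyadic scales $\rho\in[2^{-j-1},2^{-j})$.
\begin{itemize}
\item By the AWSC (Definition~\ref{DefAWSC}) and the identity $\dim_B K=\dim_H K$ from \eqref{DimHEqDimB}, the number of \emph{distinct} maps $f_{\underline{i}}$ with $\underline{i}\in\Lambda_\rho$ is at most $\rho^{-s-\varepsilon}$. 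Distinct words giving the same map yield the same set of approximating points.
\item For a fixed word, the number of lengths $n$ with $\underline{i}\in\Lambda_{2^{-j}}$ is $O(j)$, and the number of choices of $k$ is at most $n=O(j)$.
\end{itemize}
So there are at most $C j^2\, 2^{j(s+\varepsilon)}$ points at scale $2^{-j}$. For $j$ large we also have $\eta(2^{-j})\leq 2^{-j(\delta_\eta-\varepsilon)}$. Covering $E_\eta$ by the balls of radius $\eta(\rho)$ around these points, for $j\geq J$, gives
\[
\sum_j j^2\,2^{j(s+\varepsilon)}\,2^{-j(\delta_\eta-\varepsilon)t}<\infty
\quad\text{for every } t>\frac{s+\varepsilon}{\delta_\eta-\varepsilon}.
\]
The Hausdorff–Cantelli argument then gives $\dim_H E_\eta\leq s/\delta_\eta$.

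\textbf{Lower bound.} It suffices to use the periodic points $\pi((\underline{i})_\infty)$, i.e.\ $k=0$ (or $k=1$, according to the indexing convention). The main obstacle is that $K$ need not be Ahlfors regular, and may even satisfy $\mathcal H^s(K)=0$, under the AWSC alone, so the mass transference principle cannot be applied directly. We handle this as follows.
\begin{enumerate}
\item Extract a sub-IFS satisfying the strong separation condition. For each $\varepsilon>0$, pick a large $n$ and a family $\mathcal W$ of words of a common length, with $f_{\underline{w}}(K)$, $\underline{w}\in\mathcal W$, pairwise far apart, and with
\[
\frac{\log\#\mathcal W}{-\log c_{\min}(\mathcal W)}\geq s-\varepsilon .
\]
Such a family exists because the AWSC plus the box-counting definition give about $\rho^{-s+\varepsilon}$ disjoint, separated cylinders at scale $\rho$. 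Uniformising contraction ratios by a pigeonhole argument on the ratio vectors, we obtain a homogeneous SSC IFS $T'$ whose attractor $K'\subset K$ is Ahlfors regular with $\dim_H K'\geq s-2\varepsilon$.
\item Concatenations of words of $T'$ are words of $T$, so their periodic points $\pi((\underline{u})_\infty)$, with $\underline{u}\in\mathcal W^*$, lie in $K'$. The point $\pi((\underline{u})_\infty)$ is within $|f_{\underline{u}}(K)|$ of every point of the cylinder $f_{\underline{u}}(K')$. Hence the balls $B(\pi((\underline{u})_\infty),|f_{\underline{u}}(K)|)$ cover $K'$ at every generation.
\item Apply the mass transference principle for Ahlfors regular sets, exactly as in the argument of \cite[P.1821]{ED3} used above for the $b$-adic points, to shrink these balls to radius $\eta(|f_{\underline{u}}(K)|)\approx |f_{\underline{u}}(K)|^{\delta_\eta}$ along a subsequence realising the $\liminf$. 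This gives $\dim_H E_\eta\geq (s-2\varepsilon)/\delta_\eta$.
\end{enumerate}
Letting $\varepsilon\to0$ concludes. We expect step~(1) to require the most care, specifically controlling the dimension loss when passing to the sub-IFS with separated, equal-ratio cylinders.

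Theorem~\ref{ThmIntrin} then follows by applying this with $T$ rational, which satisfies the AWSC by \cite{BFmult}. Taking $\eta(r)=\psi(r^{-1})$-type comparisons, using that $h_{\mathrm{int}}(r)$ is comparable to $|f_{(i_1,\dots,i_n)}(K)|^{-1}$, and that this comparability preserves $\delta$, gives the formula.
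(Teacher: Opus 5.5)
\textbf{The gap is in your upper bound.} It rests on the sentence ``distinct words giving the same map yield the same set of approximating points''. That sentence is false whenever there are exact overlaps, which is exactly the case the AWSC hypothesis is meant to handle. For $\underline j$ of length $n$ and $k\geq 1$,
\[
\pi\big(j_1,\dots,j_n,(j_{k+1},\dots,j_n)_\infty\big)=f_{(j_1,\dots,j_k)}\big(\mathrm{Fix}\, f_{(j_{k+1},\dots,j_n)}\big).
\]
This depends on how $\underline j$ splits into prefix and suffix, not only on the map $f_{\underline j}$.

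Here is a concrete counterexample. Take $f_a(x)=x/3$, $f_b(x)=x/3+1$, $f_c(x)=x/3+1/3$, a homogeneous rational IFS of the kind in Theorem~\ref{ThmMahlerConj}. Then $f_{ab}=f_{ca}$, yet $\pi(a,b,(b)_\infty)=1/2\neq 1/3=\pi(c,a,(a)_\infty)$. One map can be realised by exponentially many words: if $f_{\underline u}=f_{\underline v}$, every concatenation of blocks $\underline u,\underline v$ gives the same map. So your bound $Cj^2 2^{j(s+\varepsilon)}$ on the number of points at scale $2^{-j}$ is not justified. Counting words instead gives, in the homogeneous case, only the exponent $\log m/(\delta_\eta\log(1/c))$. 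That is strictly larger than $s/\delta_\eta$ once exact overlaps are present.

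\textbf{The missing ingredient is the paper's Lemma~\ref{lemmecountUP}.} If $\underline j\in cl(\underline i)$, then $f_{(j_{k+1},\dots,j_n)}=f_{(j_1,\dots,j_k)}^{-1}\circ f_{\underline i}$. Hence the approximating point above is determined by the prefix map $f_{(j_1,\dots,j_k)}$ alone. All these prefix cylinders contain $f_{\underline i}(K)$, so they all meet a fixed point of it. At each of the $O(\log(1/\rho))$ possible scales, the AWSC therefore allows only $O(\rho^{-\varepsilon})$ distinct prefix maps. So each map $f_{\underline i}$ with $|f_{\underline i}(K)|\asymp\rho$ carries at most $\rho^{-2\varepsilon}$ distinct approximating points, over all words in its class and all $k$. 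With this extra factor your Hausdorff--Cantelli sum goes through unchanged.

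\textbf{Your lower bound is correct and takes a genuinely different route.}
\begin{itemize}
\item You pass to a strongly separated sub-IFS of dimension $\geq s-2\varepsilon$. This needs only $\dim_B K=\dim_H K$ and a maximal separated subfamily of $\Lambda_\rho$. Neither the AWSC nor equal contraction ratios is needed, since SSC alone gives Ahlfors regularity. You then apply the classical mass transference principle \cite{BV} to purely periodic points.
\item The paper keeps all of $K$ and covers it by balls around $\pi(i_1,\dots,i_{n-1},(i_n)_\infty)$ for $\underline i\in\Lambda_{r_k/c_1}$. It then combines the self-similar measure $\mu_\varepsilon$ of \cite{ED4} (support $K$, dimension $\geq \dim_H K-\varepsilon$) with the mass transference principle for self-similar measures of \cite{ED3}.
\end{itemize}
Your route is more elementary; the paper's avoids building a subsystem. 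If the convention $k\geq 1$ is enforced, write $\pi((\underline u)_\infty)=\pi\big((u_1,\dots,u_n,u_1),(u_2,\dots,u_n,u_1)_\infty\big)$, whose cylinder is comparable in size to $f_{\underline u}(K)$.
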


We now explain how this applies to intrinsic Diophantine approximation.

Let $\psi:\mathbb{R}_+\to \mathbb{R}_+$ be non-increasing and define
\[
W_{T,\psi}
=
\Big\{x\in K : |x-r|\leq \psi(h_{\mathrm{int}}(r))\ \text{f.i.m. } r\in\mathbb{Q}\cap K\Big\}.
\]

\begin{lemme}
Setting $\eta(r)=\psi(1/r)$, the following inclusions hold:
\begin{align*}
E_{\psi, T, \text{int}}
&\subset
\widetilde{E}_{\eta, T}:=\limsup_{\underline{i}=(i_1,\dots,i_n)\in\Lambda^*}
\bigcup_{0\leq k\leq n-1}
B\Big(\pi\big(i_1,\dots,i_n,(i_{k+1},\dots,i_n)_\infty\big),\\
&\hspace{3cm}
\eta\Big(\frac{1}{\prod_{1\leq j\leq k}q_{i_j}\,(\prod_{k+1\leq j\leq n}q_{i_j}-1)}\Big)\Big)\\
&\subset E_{\psi,T,\text{int}}\cup (\mathbb{Q}\cap K).
\end{align*}
\end{lemme}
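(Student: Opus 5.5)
Both inclusions come from unwinding the definitions and using Proposition \ref{PropFSB}. The second inclusion also needs a finiteness argument for representations of bounded denominator.

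\textbf{First inclusion.} Let $x\in E_{\psi,T,\mathrm{int}}$. There are infinitely many distinct $r\in\mathbb{Q}\cap K$ with $|x-r|\leq \psi(h_{\mathrm{int}}(r))$. For each such $r$, by Proposition \ref{PropFSB} and the definition of $h_{\mathrm{int}}$ as a minimum over a set of positive integers, we pick a representation $r=\pi(i_1,\dots,i_n,(i_{k+1},\dots,i_n)_\infty)$ that realises it:
\[
h_{\mathrm{int}}(r)=\prod_{1\leq j\leq k}q_{i_j}\Big(\prod_{k+1\leq j\leq n}q_{i_j}-1\Big).
\]
With $\eta(r)=\psi(1/r)$, the radius $\eta\big(1/h_{\mathrm{int}}(r)\big)$ of the corresponding ball equals $\psi(h_{\mathrm{int}}(r))$. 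Hence $x$ lies in the ball attached to the pair $(\underline{i},k)$. Distinct rationals $r$ give distinct pairs $(\underline{i},k)$, since $\pi$ of the eventually periodic word determines $r$. So $x$ belongs to infinitely many of the balls in the limsup, that is, $x\in\widetilde E_{\eta,T}$. The limsup should be read as ``belongs to infinitely many of the indexed balls.'' Only finitely many $k$ are attached to each $\underline{i}$, so this is the same as infinitely many words $\underline{i}$.

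\textbf{Second inclusion.} Let $x\in \widetilde E_{\eta,T}\setminus(\mathbb{Q}\cap K)$; then $x$ is irrational or $x\notin K$. Since the balls are centred on points of $K$ and, by the argument below, their radii tend to $0$ along the sequence, $x\in K$ because $K$ is closed. So assume $x$ is irrational. Take the infinitely many pairs $(\underline{i}^{(l)},k_l)$ whose balls contain $x$. Write $Q_l=\prod_{j\leq k_l}q_{i_j}\big(\prod_{j>k_l}q_{i_j}-1\big)$ and $r_l$ for the centre.

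The key point is that $Q_l\to\infty$. Every $q_i\geq 2$, so $Q_l\geq 2^{k_l}\big(2^{n_l-k_l}-1\big)\geq 2^{n_l-1}$, and the number of pairs with $Q_l$ below a given bound is finite. Next, by the definition of $h_{\mathrm{int}}$ as a minimum, $h_{\mathrm{int}}(r_l)\leq Q_l$. As $\psi$ is non-increasing, $|x-r_l|\leq \psi(Q_l)\leq \psi(h_{\mathrm{int}}(r_l))$.

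It remains to show that infinitely many distinct $r_l$ occur. Suppose instead that only finitely many values occur. Then some value $r$ is the centre for infinitely many $l$, with $Q_l\to\infty$. If $\psi(q)\to0$, this forces $|x-r|\leq\lim\psi(Q_l)=0$, so $x=r\in\mathbb{Q}$, which contradicts $x$ irrational. If $\psi$ does not tend to $0$, then $\delta_\psi=0$. In that case both sets are essentially all of $K$ up to countable sets and the statement still holds. Here I would check separately that any $x\in K$ is then approximated by infinitely many distinct rationals of $K$, namely the periodic points $f_{\underline{i}}(\pi((i_n)_\infty))$ along the coding of $x$.

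\textbf{Main obstacle.} The delicate point is this distinctness and degenerate-$\psi$ bookkeeping: infinitely many words must give infinitely many distinct rationals, and the exceptional rational points account for the union with $\mathbb{Q}\cap K$. The rest is monotonicity of $\psi$ together with $h_{\mathrm{int}}(r)\leq Q$ for every representation.
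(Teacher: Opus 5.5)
Your proposal is correct and follows the paper's own argument. The first inclusion uses an $h_{\mathrm{int}}$-minimising representation. The second uses $h_{\mathrm{int}}(r)\le Q$ together with the monotonicity of $\psi$, plus the fact that one centre $r$ reused for infinitely many words forces $x=r\in\mathbb{Q}\cap K$.

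Your extra bookkeeping ($Q_l\ge 2^{n_l-1}\to\infty$, and $x\in K$ by closedness) makes explicit what the paper uses silently, namely that the radii tend to $0$, i.e.\ $\psi(q)\to 0$. In the degenerate case you flag, where $\psi$ does not tend to $0$, you still use closedness to get $x\in K$, which needs shrinking radii. There the second inclusion holds only if $\widetilde{E}_{\eta,T}$ is read inside $K$: a fixed rational has infinitely many representations, so balls of radius bounded below around it lie in the limsup and stick out of $K$. This does not affect the paper's application, which only needs the case $\delta_\psi\ge 1$.
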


\begin{proof}
The first inclusion follows from the fact that for any $r\in\mathbb{Q}\cap K$ there exists
$\underline{i}=(i_1,\dots,i_n)\in\{1,\dots,m\}^n$ such that
\[
h_{\mathrm{int}}(r)
=
\prod_{1\leq j\leq k}q_{i_j}\,(\prod_{k+1\leq j\leq n}q_{i_j}-1).
\]

Moreover, if
\[
r=\pi\big(i_1,\dots,i_n,(i_{k+1},\dots,i_n)_\infty\big),
\]
then, by monotonicity of $\eta$,
\[
\eta\Big(\frac{1}{h_{\mathrm{int}}(r)}\Big)
\geq
\eta\Big(\frac{1}{\prod_{1\leq j\leq k}q_{i_j}\,(\prod_{k+1\leq j\leq n}q_{i_j}-1)}\Big).
\]
Hence, if $x$ belongs to infinitely many of the above balls, then $x\in E_{\psi,T,\text{int}} $.

Conversely, if
$x$ belongs to the limsup set but only finitely many distinct rationals
$r=\pi(i_1,\dots,i_n,(i_{k+1},\dots,i_n)_\infty)$ satisfy the corresponding inequality, then there exists
$r_x\in\mathbb{Q}\cap K$ and infinitely many $\underline{i}$ such that
\[
x\in B\Big(r_x,\eta\Big(\frac{1}{\prod_{1\leq j\leq k}q_{i_j}\,(\prod_{k+1\leq j\leq n}q_{i_j}-1)}\Big)\Big),
\]
which implies $x=r_x\in\mathbb{Q}$.
\end{proof}

\begin{corollary}
\begin{align*}
\dim_H E_{\psi,T,\text{int}}=\dim_H
\widetilde{E}_{\eta, T}.
\end{align*}
\end{corollary}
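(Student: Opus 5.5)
The corollary follows by sandwiching $E_{\psi,T,\text{int}}$ between two sets of equal dimension, using the inclusions of the preceding lemma. On one side, the lemma gives $E_{\psi,T,\text{int}}\subset \widetilde{E}_{\eta,T}$. Monotonicity of Hausdorff dimension under inclusion then yields
\[
\dim_H E_{\psi,T,\text{int}}\leq \dim_H \widetilde{E}_{\eta,T}.
\]
On the other side, the lemma gives $\widetilde{E}_{\eta,T}\subset E_{\psi,T,\text{int}}\cup(\mathbb{Q}\cap K)$. Hausdorff dimension is countably stable, and $\mathbb{Q}\cap K$ is countable, so $\dim_H(\mathbb{Q}\cap K)=0$. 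Therefore
\[
\dim_H \widetilde{E}_{\eta,T}\leq \max\big\{\dim_H E_{\psi,T,\text{int}},\,0\big\}=\dim_H E_{\psi,T,\text{int}}.
\]
Combining the two inequalities gives the claimed equality.

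The only point requiring care is the second inclusion of the lemma, which is taken as given. It uses the observation that if $x$ lies in infinitely many balls centred at only finitely many distinct rationals, then $x$ must coincide with one of those rationals. This holds because the radii $\eta(1/\text{denominator})$ tend to $0$: there are only finitely many words of bounded length, so the denominators $\prod q_{i_j}(\prod q_{i_j}-1)$ tend to infinity along any infinite family of words, and $\psi$ tends to $0$ (when $\psi$ does not tend to $0$, we have $\delta_\psi=0$ and both sets contain all of $K$ apart from countably many points, so the equality is trivial). Hence there is no real obstacle: the argument is an inclusion argument together with the fact that countable sets have zero Hausdorff dimension.
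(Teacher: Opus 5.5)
Your argument is the paper's: the corollary is read off from the two inclusions of the preceding lemma, using monotonicity of $\dim_H$ and the fact that the countable set $\mathbb{Q}\cap K$ has dimension $0$.

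One correction concerns your parenthetical on the case $\psi\not\to 0$. There the radii are bounded below by some $c>0$, and $\widetilde{E}_{\eta,T}$ is a limsup of balls in $\mathbb{R}$ that is not intersected with $K$. It therefore contains the open $c/2$-neighbourhood of $K$ and has dimension $1$, whereas $E_{\psi,T,\text{int}}\subset K$. So when $\dim_H K<1$ the equality in that case is not trivial but false, and so is the lemma's second inclusion, unless $\widetilde{E}_{\eta,T}$ is read as a subset of $K$, as in Theorem \ref{ThmUPAWSC}.
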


Finally, for any $\underline{i}=(i_1,\dots,i_n)$,
\[
|f_{\underline{i}}(K)|
=\frac{|K|}{\prod_{1\leq j\leq n}q_{i_j}}
\leq
\frac{|K|}{\prod_{1\leq j\leq k}q_{i_j}(\prod_{k+1\leq j\leq n}q_{i_j}-1)}
\leq
\frac{2|K|}{\prod_{1\leq j\leq n}q_{i_j}}.
\]

Define $\widetilde{\eta},\widehat{\eta}:\mathbb{R}_+\to\mathbb{R}_+$ by
\[
\widetilde{\eta}(r)=\eta\Big(\frac{r}{|K|}\Big),
\qquad
\widehat{\eta}(r)=\eta\Big(\frac{2r}{|K|}\Big).
\]
Then $\delta_{\widetilde{\eta}}=\delta_{\widehat{\eta}}=\delta_\eta$ and
\[
\widetilde{\eta}(\vert f_{\underline{i}}(K)\vert)
\leq
\eta\Big(\frac{1}{\prod_{1\leq j\leq k}q_{i_j}(\prod_{k+1\leq j\leq n}q_{i_j}-1)}\Big)
\leq
\widehat{\eta}(\vert f_{\underline{i}}(K)\vert).
\]
Applying Theorem~\ref{ThmUPAWSC} to $\widetilde{\eta}$ and $\widehat{\eta}$ yields the desired result.
\subsection{Proof of Theorem \ref{ThmUPAWSC}}

First, note that the case $\delta_{\eta}<1$ readily follows from the case $\delta_{\eta}\geq 1$. Thus we assume throughout that $\delta_{\eta}\geq 1$.

\subsubsection{Proof of the upper bound}
\label{Secproofupper}
We introduce the following notation related to the possible occurrence of exact overlaps:
\begin{equation}
\begin{cases}
cl(\underline{i})=\{\underline{j}\in\Lambda^n : f_{\underline{j}}=f_{\underline{i}}\},\\
\mathcal{CL}_n=\{cl(\underline{i}) : \underline{i}\in\Lambda^n\}.
\end{cases}
\end{equation}

Given $\underline{i}=(i_1,\ldots,i_n)\in\Lambda^n$, define
\[
\text{\emph{Uper}}(\underline{i})
=
\Big\{
\pi\big((j_{k+1},\ldots,j_n)_\infty\big):
\underline{j}=(j_1,\ldots,j_n)\in cl(\underline{i}),\ 1\le k\le n
\Big\}.
\]

We start with a counting lemma.

\begin{lemme}\label{lemmecountUP}
For every $\varepsilon>0$ there exists $N_{\varepsilon}\in\mathbb{N}$ such that for every $n\geq N_{\varepsilon}$ and any $\underline{i}\in\{1,\ldots,m\}^n$,
\[
1\le \#\mathrm{Uper}(\underline{i})\le |f_{\underline{i}}(K)|^{-\varepsilon}.
\]
\end{lemme}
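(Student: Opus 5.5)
The lower bound $\#\mathrm{Uper}(\underline{i})\ge 1$ is immediate: taking $\underline{j}=\underline{i}\in cl(\underline{i})$ and $k=1$ already produces an element of $\mathrm{Uper}(\underline{i})$. The work is in the upper bound. There $\mathrm{Uper}(\underline{i})$ is the image under $(\underline{j},k)\mapsto\pi((j_{k+1},\dots,j_n)_\infty)$ of $cl(\underline{i})\times\{1,\dots,n\}$, so
\[
\#\mathrm{Uper}(\underline{i})\le n\cdot \#cl(\underline{i}).
\]
It is therefore enough to show two things: that $n\le |f_{\underline{i}}(K)|^{-\varepsilon/2}$, and that $\#cl(\underline{i})\le |f_{\underline{i}}(K)|^{-\varepsilon/2}$, for $n$ large.

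The factor $n$ is harmless. Since every contraction ratio satisfies $c_j\le c_m<1$, we have $|f_{\underline{i}}(K)|=c_{\underline{i}}|K|\le c_m^n|K|$. Hence $|f_{\underline{i}}(K)|^{-\varepsilon/2}\ge (c_m^{-n}/|K|)^{\varepsilon/2}$, which grows exponentially in $n$ and eventually exceeds $n$.

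For $\#cl(\underline{i})$ the plan is to use AWSC (Definition \ref{DefAWSC}).
\begin{itemize}
\item All words $\underline{j}\in cl(\underline{i})$ have length $n$ and define the same map $f_{\underline{j}}=f_{\underline{i}}$. In particular they share the ratio $c_{\underline{j}}=c_{\underline{i}}$ and the image $f_{\underline{j}}(K)=f_{\underline{i}}(K)$.
\item Set $r=c_{\underline{i}}$. Each $\underline{j}$ in the class has $c_{\underline{j}}\le r$, so some prefix $\underline{j}'$ of $\underline{j}$ lies in $\Lambda_r$.
\item For every such prefix, $f_{\underline{j}'}(K)\supset f_{\underline{j}}(K)=f_{\underline{i}}(K)$, so $f_{\underline{j}'}(K)$ meets $B(x,r)$ for any fixed $x\in f_{\underline{i}}(K)$.
\end{itemize}

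The main obstacle is that AWSC counts distinct maps $f_{\underline{j}'}$, not words, and two words of the class may have prefixes defining the same map. I would handle this by showing that the assignment $\underline{j}\mapsto$ (the map of its $\Lambda_r$-prefix, together with the remaining suffix) is injective.
\begin{itemize}
\item The prefix length is bounded: $c_{\underline{j}'}>r c_1$, so it is at least $n-1-\log(1/c_1)\big/\log(1/c_m)$. Hence the suffix has length bounded by a constant $L$, and there are at most $m^{L+1}$ choices of suffix.
\item It remains to check that, inside $cl(\underline{i})$, a word is determined by its prefix map and its suffix. This step is delicate. It is immediate for the map itself, since $f_{\underline{j}}=f_{\underline{j}'}\circ f_{\mathrm{suffix}}$ is determined by the pair. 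For the words, I would instead bound $\#cl(\underline{i})$ by the number of prefix classes times $m^{L+1}$, applying the same argument inductively along lengths so that at each scale only distinct maps are counted.
\end{itemize}

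If this bookkeeping goes through, it gives
\[
\#cl(\underline{i})\le m^{L+1}\,\sup_x\#\{f_{\underline{j}'}:\underline{j}'\in\Lambda_r,\ f_{\underline{j}'}(K)\cap B(x,r)\ne\emptyset\}\le m^{L+1} r^{-\varepsilon/4},
\]
where the last inequality is AWSC with $r$ small, i.e.\ $n$ large. Since $r\asymp|f_{\underline{i}}(K)|$, the right-hand side is at most $|f_{\underline{i}}(K)|^{-\varepsilon/2}$. Combined with the bound on $n$, this proves the upper bound.

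If the word-counting reduction fails, the fallback is to redefine $\mathrm{Uper}$ through the maps: the value $\pi((j_{k+1},\dots,j_n)_\infty)$ is the fixed point of $f_{j_{k+1}\cdots j_n}$, so it depends only on the map $f_{j_{k+1}\cdots j_n}$. For fixed $k$, the number of such suffix maps with image meeting the right region is then bounded by AWSC applied at scale $c_{(j_{k+1},\dots,j_n)}$.
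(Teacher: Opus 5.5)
The lower bound and the absorption of the factor $n$ are fine. The argument breaks at the first reduction, $\#\mathrm{Uper}(\underline{i})\le n\cdot\#cl(\underline{i})$: it replaces points, which depend only on maps, by words. The bound you then need, $\#cl(\underline{i})\le|f_{\underline{i}}(K)|^{-\varepsilon/2}$, is false in general, and so is your displayed estimate $\#cl(\underline{i})\le m^{L+1}\sup_x\#\{\cdots\}$. This holds even under WSC and inside the paper's own class of rational IFSs.

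Take $f_p(x)=(x+p)/5$ with $p\in\{0,1,5,6\}$. Its attractor is $K=C\cup(C+1)$, where $C$ is the base-$5$ Cantor set on the digits $\{0,1,2\}$, so $K$ has empty interior. Write $p_{j_t}=a_t+5b_t$ with $a_t,b_t\in\{0,1\}$. Then $5^nf_{\underline{j}}(0)=\sum_{s=0}^{n}d_s5^s$ with $d_s=a_{n-s}+b_{n-s+1}\in\{0,1,2\}$ (where $a_0=b_{n+1}=0$), so the map is determined by the digits $d_s$. The words with $d_0,d_n$ fixed and $d_s=1$ for $0<s<n$ form a single class of $2^{n-1}$ words. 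By contrast, $|f_{\underline{i}}(K)|^{-\varepsilon/2}\asymp5^{n\varepsilon/2}$ is much smaller once $\varepsilon<2\log 2/\log 5$, and the AWSC count in your display stays bounded. (An IFS with two identical maps is a cruder example.)

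This is also why the injectivity you hoped for cannot hold. A word of $cl(\underline{i})$ is not determined by its $\Lambda_r$-prefix map and its suffix, since exponentially many prefix words can define the same prefix map. Iterating the argument over lengths just multiplies a bounded factor about $n/L$ times.

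Your fallback keeps the correct observation that $\pi((j_{k+1},\dots,j_n)_\infty)$ depends only on the suffix map. However, the suffix images are not localized at their own scale. In the example they are the sets $(K+c)/5^{n-k}-a$ for distinct integers $a$, so no ball of radius comparable to $c_{(j_{k+1},\dots,j_n)}$ meets them all, and AWSC at that scale does not control their number. Moreover, fixing $k$ does not fix the scale when the $q_i$ differ.

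The missing idea is to pass from suffixes to prefixes. For $\underline{j}\in cl(\underline{i})$ one has $f_{(j_{k+1},\dots,j_n)}=f_{(j_1,\dots,j_k)}^{-1}\circ f_{\underline{i}}$. Hence $\#\mathrm{Uper}(\underline{i})\le\#\{f_{(j_1,\dots,j_k)}:\ \underline{j}\in cl(\underline{i}),\ 1\le k\le n\}$, which is a count of distinct prefix \emph{maps}. These maps are localized: $f_{(j_1,\dots,j_k)}(K)\supset f_{\underline{i}}(K)$, so they all meet $B(x,\rho)$ for one fixed $x\in f_{\underline{i}}(K)$ at every scale $\rho$. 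Sort them into $O(n)$ windows of ratios $(t\rho,\rho]$ and apply AWSC in each window. This gives $\#\mathrm{Uper}(\underline{i})\lesssim n\,|f_{\underline{i}}(K)|^{-\varepsilon/2}\le|f_{\underline{i}}(K)|^{-\varepsilon}$ for $n$ large, which is the paper's proof.

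Your truncation remark is exactly what makes the passage from a window to $\Lambda_\rho$ rigorous: a map whose ratio lies in such a window is determined by its $\Lambda_\rho$-prefix map together with a suffix word of bounded length. It just has to be applied to these distinct prefix maps rather than to the words of $cl(\underline{i})$.
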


\begin{proof}
Set $\varepsilon_0=\varepsilon/2$. First observe that for any $\underline{j}\in cl(\underline{i})$ and any $1\le k\le n$,
\begin{align*}
\pi\big((j_1,...,j_n),(j_{k+1},\ldots,j_n)_\infty\big)
&=\lim_{t\to +\infty}f_{(j_1,...,j_n)}\circ f_{(i_{k+1},...,i_n,...,i_{k+1},...,i_n)}(0)\\
&=
f_{\underline{i}}\Big(\pi\big((j_{k+1},\ldots,j_n)_\infty\big)\Big).
\end{align*}
Hence
\[
\#\text{\emph{Uper}}(\underline{i})
=
\#\Big\{\pi\big((j_{k+1},\ldots,j_n)_\infty\big):
\underline{j}\in cl(\underline{i}),\ 1\le k\le n
\Big\}.
\]

Moreover, if $(\widetilde{j}_1,...,\widetilde{j}_n),(j_1,..,j_n)\in \mathcal{C}l(\underline{i})$ are such that $$f_{(\widetilde{j}_1,....,\widetilde{j}_k)}=f_{(j_1,...,j_k)},$$
one has  $f_{(\widetilde{j}_{k+1},....,\widetilde{j}_n)}=f_{(j_{k+1},...,j_n )},$ so that $$\pi\big((\widetilde{ j}_{k+1},\ldots,\widetilde{j}_n)_\infty\big)=\pi\big((j_{k+1},\ldots,j_n)_\infty\big).$$

This yields $$\#\text{\emph{Uper}}(\underline{i})
\leq
\#\Big\{f_{(j_1,...,j_k)}:
\underline{j}=(j_1,...,j_n)\in cl(\underline{i}),\ 1\le k\le n
\Big\}. $$

Fix $0<t<c_1$ and set
\[
n_{\underline{i}}=\Big\lfloor\frac{-\log c_{\underline{i}}}{\log t}\Big\rfloor+1,
\qquad
0\le p\le n_{\underline{i}}.
\]
Define
\[
\widetilde{\text{\emph{Per}}}(\underline{i},p)
=
\Big\{
f_{(j_1,...,j_k)}:
(j_1,\ldots,j_n)\in cl(\underline{i}),\ 1\le k\le n,\
t^{-(p+1)}<c_{(j_{1},\ldots,j_k)}\le t^{-p}
\Big\}.
\]
Fix $x\in f_{\underline{i}}(K)$ and remark that for any $\underline{j}\in \mathcal{C}l(\underline{i}),$
\[
f_{(j_1,\ldots,j_k)}(K)\cap f_{\underline{i}}(K)\neq\varnothing.
\]
As $$\widetilde{\text{\emph{Per}}}(\underline{i},p)\subset \big\{f_{\underline{i}}:\ \underline{i}\in \Lambda_{\vert K \vert t^p} \text{ and } f_{\underline{i}}(K)\cap B(x,\vert K \vert t^p)\neq \emptyset\big\},$$ 
by Definition \ref{DefAWSC}, there exists a constant $C_{\varepsilon_0}>0$ such that $$\#\widetilde{\text{\emph{Per}}}(\underline{i},p) \leq C_{\varepsilon_0} t^{-\varepsilon_0 p}\leq\frac{C_{\varepsilon_0}}{\vert K\vert}\vert f_{\underline{i}}(K)\vert^{-\varepsilon_0}.$$
Since $$ \bigcup_{0\leq p \leq n_{\underline{i}}}\widetilde{\text{\emph{Per}}}(\underline{i},p)=\widetilde{\text{\emph{Per}}}(\underline{i}),$$
one has $$\#\widetilde{\text{\emph{Per}}}(\underline{i},p)\leq (n_{\underline{i}}+1)\frac{C_{\varepsilon_0}}{\vert K\vert}\vert f_{\underline{i}}(K)\vert^{-\varepsilon_0} \leq  \vert f_{\underline{i}}(K)\vert^{-2\varepsilon_0}$$
provided that $n$ was chosen large enough.
\end{proof}
Let $B$ be a ball and denote by $r=\operatorname{rad}(B)$ its radius. Set
\[
\Lambda_B=
\Big\{
\underline{i}=(i_1,\ldots,i_n)\in\Lambda^*:f_{\underline{i}}(K)\cap B\neq \emptyset, 
c_{\underline{i}}\le r< c_{(i_1,\ldots,i_{n-1})}
\Big\}.
\]

Recall that $\dim_H K=\dim_B K$, where $\dim_B$ denotes the box dimension of $K$.
Fix $0<\lambda<c_1$, where $c_1=\min_{1\le i\le m}c_i$.
Let $\varepsilon>0$ and define
\[
s_\varepsilon=
\frac{\dim_H K+2\varepsilon}{(1-\varepsilon)\delta_\eta}+\varepsilon.
\]
Choose $N_\varepsilon\in\mathbb N$ such that the following hold:

\begin{itemize}
\item[(1)]
Lemma~\ref{lemmecountUP} holds for all $\underline{i}\in\{1,\ldots,m\}^n$ with $n\ge N_\varepsilon$;

\item[(2)]
for every $n\ge N_\varepsilon$ (recalling \eqref{DimHEqDimB}) there exists a covering $\mathcal C_n$ of $K$ by balls of radius $\lambda^n$ satisfying
\[
\#\mathcal C_n\le \lambda^{-n\dim_H K-\varepsilon n};
\]

\item[(3)]
for every ball $B$  with $|B|\le \lambda^{N_\varepsilon}$ (recalling that $T$ satisfies AWSC (Definition \ref{DefAWSC}),
\[
\#\{f_{\underline{i}}:\underline{i}\in\Lambda_B\}\le |B|^{-\varepsilon};
\]

\item[(4)]
for any $0<r\le\lambda^{N_\varepsilon}$,
\[
\eta(r)\le r^{\delta_\eta(1-\varepsilon)}.
\]
\end{itemize}

For $n\ge N_\varepsilon$ and $L\in\mathcal C_n$, by (1) and (3) we have
\[
\#\{f_{\underline{i}}:\underline{i}\in\Lambda_L\}\le |L|^{-\varepsilon},
\qquad
\#\text{\emph{Uper}}(\underline{i})\le |L|^{-\varepsilon}
\quad\text{for all }\underline{i}\in\Lambda_L,
\]
where we used that $\underline{i}\in\Lambda_L$ implies
$|f_{\underline{i}}(K)|\asymp |L|$.

Moreover, since $0<\lambda<c_1$, for every
$\underline{i}\in\bigcup_{n\ge N_\varepsilon}\{1,\ldots,m\}^n$
there exists $L\in\bigcup_{n\ge N_\varepsilon}\mathcal C_n$ such that
$\underline{i}\in\Lambda_L$.

Therefore, for any $k\ge N_\varepsilon$,
\begin{align*}
&\limsup_{\underline{i}=(i_1,\ldots,i_n)\in\Lambda^*}
\bigcup_{0\le k\le n-1}
B\Big(
\pi((i_{k+1},\ldots,i_n)_\infty),
\eta(|f_{\underline{i}}(K)|)
\Big)
\\
&\subset
\bigcup_{n\ge k}
\bigcup_{L\in\mathcal C_n}
\bigcup_{\underline{i}\in\Lambda_L}
\bigcup_{x\in\text{\emph{Uper}}(\underline{i})}
B\big(x,\eta(|f_{\underline{i}}(K)|)\big).
\end{align*}

Furthermore,
\begin{align*}
\sum_{n\ge N_\varepsilon}
\sum_{L\in\mathcal C_n}
\sum_{\underline{i}\in\Lambda_L}
\#\text{\emph{Uper}}(\underline{i})\,\eta(|L|)^{s_\varepsilon}
&\le
\sum_{n\ge N_\varepsilon}
\#\mathcal C_n\,\lambda^{-2\varepsilon n}
\,\lambda^{n s_\varepsilon\delta_\eta(1-\varepsilon)}
\\
&\le
\sum_{n\ge N_\varepsilon}
\lambda^{-n\dim_H K-2\varepsilon n+s_\varepsilon n\delta_\eta(1-\varepsilon)}
\\
&\le
\sum_{n\ge N_\varepsilon}\lambda^{n\varepsilon}
<\infty.
\end{align*}

Consequently,
\[
\dim_H
\limsup_{\underline{i}=(i_1,\ldots,i_n)\in\Lambda^*}
\bigcup_{0\le k\le n-1}
B\Big(
\pi((i_{k+1},\ldots,i_n)_\infty),
\eta(|f_{\underline{i}}(K)|)
\Big)
\le s_\varepsilon.
\]

Since this holds for every $\varepsilon>0$, we finally obtain
\[
\dim_H
\limsup_{\underline{i}=(i_1,\ldots,i_n)\in\Lambda^*}
\bigcup_{0\le k\le n-1}
B\Big(
\pi((i_{k+1},\ldots,i_n)_\infty),
\eta(|f_{\underline{i}}(K)|)
\Big)
\le
\frac{\dim_H K}{\delta_\eta}.
\]

\subsubsection{Proof of the lower bound}

Fix $\varepsilon>0$ and let $(r_k)_{k\in \mathbb{N}}$ be a decreasing sequence in
$\mathbb{R}_+^{\mathbb{N}}$ with $r_k\to 0$ such that, for every $k\in\mathbb{N}$,
\[
\eta(r_k)\geq r_k^{(1+\varepsilon)\delta_{\eta}}.
\]

Recall that for any $k\in\mathbb{N}$,
\[
K=\bigcup_{\underline{i}\in\Lambda_{r_k}}f_{\underline{i}}(K).
\]
Moreover, for any $\underline{i}=(i_1,\dots,i_n)\in\Lambda_{r_k}$,
\[
f_{\underline{i}}(K)\subset
B\Big(\pi\big((i_1,\dots,i_n),(i_n)_\infty\big),|f_{\underline{i}}(K)|\Big).
\]

This implies that
\[
K=\limsup_{k\in\mathbb{N},\,\underline{i}=(i_1,\dots,i_n)\in\Lambda_{r_k/c_1}}
B\Big(\pi\big((i_1,\dots,i_n),(i_n)_\infty\big),\frac{r_k}{c_1}\Big).
\]

We recall the following result, established as \cite[Proposition 6.7]{ED4}.

\begin{proposition}[\cite{ED4}]
There exist a self-similar IFS $T_\varepsilon$ and an associated self-similar
measure $\mu_\varepsilon$ such that
\[
\begin{cases}
\supp(\mu_\varepsilon)=K,\\
\dim_H \mu_\varepsilon\geq \dim_H K-\varepsilon.
\end{cases}
\]
\end{proposition}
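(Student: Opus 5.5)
The plan is to build $T_\varepsilon$ from a well-separated subsystem of iterates of $T$ and then enlarge it so that its attractor is all of $K$. The basic mechanism is the standard Vitali-type extraction combined with the AWSC. Under the AWSC, the box dimension of $K$ is governed by the growth of the number of distinct maps $\#\{f_{\underline{i}}:\underline{i}\in\Lambda_r\}$; this follows from \eqref{DimHEqDimB} together with Definition \ref{DefAWSC}. Hence for $r$ small there are at least $r^{-\dim_H K+\varepsilon/4}$ distinct maps $f_{\underline{i}}$ with $\underline{i}\in\Lambda_r$.

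\textbf{Step 1: a separated subfamily.} Fix such a small $r$. Each ball $B(x,r)$ meets at most $r^{-\varepsilon/4}$ of the sets $f_{\underline{i}}(K)$ with $\underline{i}\in\Lambda_r$. A greedy selection therefore yields a subfamily $\mathcal{F}\subset\Lambda_r$ with $\#\mathcal{F}\geq r^{-\dim_H K+\varepsilon/2}$ whose images $f_{\underline{i}}(K)$ are pairwise at distance at least $c\,r$. To make the IFS homogeneous in the limit one may, if needed, pigeonhole to words with the same contraction ratio up to a bounded factor. Alternatively, one can keep the ratios distinct and use the similarity-dimension equation $\sum_{\underline{i}\in\mathcal{F}}c_{\underline{i}}^s=1$, which gives $s\geq \dim_H K-\varepsilon/2-O(1/\log(1/r))$ since every $c_{\underline{i}}\ge c_1 r$.

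\textbf{Step 2: the self-similar measure on the subsystem.} The IFS $\{f_{\underline{i}}\}_{\underline{i}\in\mathcal{F}}$ satisfies the strong separation condition. Its natural self-similar measure, with weights $c_{\underline{i}}^s$, therefore has dimension $s\geq\dim_H K-\varepsilon/2$. Its support, however, is only a Cantor subset of $K$.

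\textbf{Step 3: enlarging the support.} Let $T_\varepsilon=\{f_{\underline{i}}\}_{\underline{i}\in\mathcal{F}}\cup\{f_{\underline{j}}\}_{\underline{j}\in\Lambda_r}$. Since $\bigcup_{\underline{j}\in\Lambda_r}f_{\underline{j}}(K)=K$, the set $K$ is invariant under $T_\varepsilon$, so by uniqueness the attractor of $T_\varepsilon$ is $K$. Assign probability weights $(1-\theta)c_{\underline{i}}^s$ to the maps in $\mathcal{F}$ and $\theta/\#\Lambda_r$ to the others, and let $\mu_\varepsilon$ be the associated self-similar measure. Every weight is positive, so $\supp(\mu_\varepsilon)=K$.

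\textbf{Step 4: the dimension bound (main obstacle).} It remains to show $\dim_H\mu_\varepsilon\geq\dim_H K-\varepsilon$ once $\theta$ is small, and overlaps from the added maps make this the delicate point. I would bound the lower local dimension using a symbolic lower bound of the form $\dim\geq(\text{entropy}-\text{overlap loss})/\text{Lyapunov exponent}$. By the AWSC, the number of cylinders at scale $\rho$ meeting a given ball is $\rho^{-o(1)}$. The overlap loss is at most $\theta\cdot o(\log(1/r))$ plus the entropy contribution of the $\theta$-part, namely $H(\theta)+\theta\log\#\Lambda_r$, which is controlled by $\theta\log(1/r)$ up to bounded factors. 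The entropy is at least $(1-\theta)s\,\chi$, where $\chi$ is the Lyapunov exponent, of order $\log(1/r)$. Taking $\theta$ small in terms of $\varepsilon$, and $r$ small, yields dimension $\geq s-\varepsilon/2\geq\dim_H K-\varepsilon$.

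An alternative avoids entropy: estimate $\mu_\varepsilon(B(x,\rho))$ directly. One sums over all cylinders at scale $\rho$ meeting the ball, at most $\rho^{-o(1)}$ of them by the AWSC, and bounds the weight of each by its $\mathcal{F}$-part. This requires that typical points use mostly $\mathcal{F}$-letters, which the ergodic theorem guarantees at frequency $1-\theta$, and that is enough for the lower local dimension statement.
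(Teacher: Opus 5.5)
The paper does not prove this statement; it quotes it from \cite{ED4}, so your argument has to stand on its own. Steps 1--3 do stand. The separated family $\mathcal F\subset\Lambda_r$ with $s\ge\dim_H K-\varepsilon/2-O(1/\log(1/r))$ is correct. So is the claim that the attractor of $T_\varepsilon$ is $K$ by uniqueness, and that $\supp(\mu_\varepsilon)=K$. Step 1 does not even need the AWSC: pick one $\Lambda_r$-cylinder through each point of a maximal $3r|K|$-separated subset of $K$. Since $\dim_H K\le\underline{\dim}_B K$, this already gives at least $r^{-\dim_H K+\varepsilon/4}$ cylinders that are pairwise $r|K|$-separated.

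The gap is Step 4, which is the whole content of the proposition. The bound you assert for the ``overlap loss'' comes with no proof. The only input you offer, the AWSC count, cannot supply it: the AWSC counts \emph{distinct maps} whose cylinders meet a ball, whereas $\mu_\varepsilon$-mass sits on \emph{classes of words} giving the same map, and exact overlaps in $T$ can make such classes heavy. The same objection defeats your ``direct'' alternative. The quantity $\mu_\varepsilon(B(x,\rho))$ collects mass from every word whose cylinder meets the ball, for instance from pure $\Lambda_r$-words of length $k\approx\log\rho/\log r$. These words have no $\mathcal F$-part, and their total mass $\theta^k=\rho^{\log(1/\theta)/\log(1/r)}$ is far larger than $\rho^{s}$ when $r\ll\theta$. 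How that mass is distributed near $x$ is governed by the overlap structure of $T$, and the ergodic theorem along the coding of $x$ gives no control over it.

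The missing idea is that the $\mathcal F$-letters can be read off from the point. Label each letter $\omega_t$ of $T_\varepsilon$ by $\xi_t=\ast$ if it comes from the $\mathcal F$-copy, and by $\xi_t=\omega_t\in\Lambda_r$ otherwise. The sets $f_{\underline i}(K)$, $\underline i\in\mathcal F$, are pairwise disjoint and $\pi(\omega)\in f_{\omega_1}(K)$, so $\omega_1$ is a function of $(\pi(\omega),\xi_1)$.

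Now use Feng--Hu \cite{FH}. For the Bernoulli measure $m$ defining $\mu_\varepsilon$, $\dim_H\mu_\varepsilon=h_\pi/\chi$, where $h_\pi=H_m(\mathcal A)-H_m(\mathcal A\mid\pi^{-1}\mathcal B(\mathbb R))$ and $\mathcal A$ is the partition by $\omega_1$.
\begin{itemize}
\item The observation above gives $H_m(\mathcal A\mid\pi^{-1}\mathcal B(\mathbb R))\le H_m(\xi_1)=H(\theta)+\theta\log\#\Lambda_r$.
\item By the chain rule, $H_m(\mathcal A)=H_m(\xi_1)+(1-\theta)s\chi_{\mathcal F}$, where $\chi_{\mathcal F}=\sum_{\underline i\in\mathcal F}c_{\underline i}^s\log(1/c_{\underline i})$.
\end{itemize}
The $\theta$-part cancels exactly, so no relation between $\theta$ and $\#\Lambda_r$ is needed. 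All ratios lie in $[c_1r,r]$, hence
\[
\dim_H\mu_\varepsilon\ \ge\ \frac{(1-\theta)s\chi_{\mathcal F}}{\chi}\ \ge\ \frac{(1-\theta)\,s\log(1/r)}{\log(1/(c_1r))},
\]
and this exceeds $\dim_H K-\varepsilon$ once $\theta$, and then $r$, are small.

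If you prefer to avoid projection entropy, disintegrate $\mu_\varepsilon=\int\nu_\xi\,dP(\xi)$ along the label process (law $P$), with $\nu_\xi=\pi_*(m(\cdot\mid\xi))$. Each $\nu_\xi$ is separated at the $\ast$-positions, so
\[
\nu_\xi\big(B(\pi(\omega),\tfrac{\delta}{2}c_{\omega|_n})\big)\le\prod_{t\le n,\ \xi_t=\ast}c_{\omega_t}^s,
\]
where $\delta$ is the separation of $\mathcal F$. The law of large numbers then gives the same bound on lower local dimensions for $P$-a.e.\ $\xi$. Finally, $\mu_\varepsilon(E)>0$ forces $\nu_\xi(E)>0$ for some such $\xi$. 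Together with the packing remark on Step 1, this means no separation property of $T$ is used anywhere.
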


The following mass transference principle is established in \cite{ED3}.

\begin{theoreme}[\cite{ED3}]
Let $\mu\in\mathcal{M}(\mathbb{R})$ be a self-similar measure and let
$(B_n)_{n\in\mathbb{N}}$ be a sequence of balls centred on $\supp(\mu)$ such that
\[
\mu\Big(\limsup_{n\to+\infty}B_n\Big)=1.
\]
Then, for any $\delta\geq 1$,
\[
\dim_H \limsup_{n\to +\infty}B_n^\delta
\geq \frac{\dim_H \mu}{\delta}.
\]
\end{theoreme}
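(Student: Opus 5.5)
The plan is to prove the bound with the mass distribution principle. Write $\alpha=\dim_H\mu$ and fix $\varepsilon>0$. I will construct a Cantor-type set $\mathcal{K}\subset\limsup_n B_n^\delta$ and a probability measure $\nu$ on $\mathcal{K}$ such that $\nu(B(x,r))\lesssim r^{\alpha/\delta-C\varepsilon}$ for all $x$ and all small $r$.

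\textbf{Regularity input.} By Feng--Hu, $\mu$ is exact dimensional. Egorov's theorem then gives, for each $\rho_0>0$, a compact set $E_{\rho_0}$ with $\mu(E_{\rho_0})\ge 1/2$ such that
\[
r^{\alpha+\varepsilon}\le\mu(B(x,r))\le r^{\alpha-\varepsilon}\qquad\text{for all } x\in E_{\rho_0},\ r\le\rho_0 .
\]
To transport this regularity into small regions I will use self-similarity. Any ball $B$ centred on $\supp\mu=K$ contains a cylinder $f_w(K)$ with $c_w\asymp|B|$, and $\mu|_{f_w(K)}$ dominates a constant multiple $p_w\,\mu\circ f_w^{-1}$. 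Inside every such cylinder, the image $f_w(E_{\rho_0})$ therefore carries a proportion of the mass of the cylinder, and on it the local estimates hold at scales $\le c_w\rho_0$, up to the factor $p_w$.

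\textbf{Generational construction.} Level $0$ is $K$ itself. Given a level-$k$ piece $F$, which is a cylinder $f_w(K)$ inside some contracted ball $B_{n}^\delta$, I will proceed as follows.
\begin{enumerate}
\item Since $\mu(\limsup_n B_n)=1$, $\mu$-a.e. point of $f_w(E_{\rho_0})$ lies in balls $B_n$ of arbitrarily small radius.
\item By the $5r$-covering lemma, choose finitely many pairwise disjoint balls $B_{n_j}$, with $5B_{n_j}\subset F$ and radii much smaller than $|F|$ (and with indices $n_j$ larger than all indices used so far), centred near points of $f_w(E_{\rho_0})$, such that their total $\mu$-mass is at least a fixed fraction of $\mu(F)$.
\item In each $B_{n_j}^\delta$, pick a cylinder $F_j$ of diameter $\asymp|B_{n_j}|^\delta$; these are the children of $F$.
\end{enumerate}
The measure is defined by splitting mass proportionally to the parent balls:
\[
\nu(F_j)=\nu(F)\,\frac{\mu(B_{n_j})}{\sum_i\mu(B_{n_i})}.
\]
Every point of $\mathcal{K}=\bigcap_k\bigcup F$ lies in infinitely many $B_n^\delta$, since the indices strictly increase from one generation to the next.

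\textbf{Mass estimate.} The goal is $\nu(F_j)\lesssim \mu(B_{n_j})^{1-\varepsilon}\lesssim |B_{n_j}|^{\alpha-2\varepsilon}=|F_j|^{(\alpha-2\varepsilon)/\delta}$. This requires the renormalisation factor $\nu(F)/\mu(F)$ to stay subpolynomial. I intend to arrange this by letting the scales decrease superexponentially between generations, so that accumulated constants are absorbed. For a general ball $B(x,r)$ there are three regimes:
\begin{itemize}
\item $r$ below the child scale $|F_j|$: use the covering by children;
\item $r$ between $|F_j|$ and $|B_{n_j}|$: use the bound for the single piece $F_j$;
\item $r$ between $|B_{n_j}|$ and $|F|$: $B(x,r)$ meets only boundedly many of the disjoint balls $B_{n_i}$ of radius $\gtrsim r$, together with smaller balls whose total $\mu$-mass is at most $\mu(B(x,2r))\le r^{\alpha-\varepsilon}$. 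Since $r^{\alpha-\varepsilon}\le r^{(\alpha-\varepsilon)/\delta}$ for $\delta\ge1$, this gives the bound.
\end{itemize}
The mass distribution principle then yields $\dim_H\limsup_n B_n^\delta\ge\alpha/\delta-C\varepsilon$, and letting $\varepsilon\to0$ gives the claim.

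\textbf{Main obstacle.} The hard step is the lower control of $\mu$ on the contracted pieces. The centres of the $B_n$ are arbitrary points of $K$, not typical ones, so $\mu(B_n^\delta)$ could be far smaller than $|B_n|^{\delta(\alpha+\varepsilon)}$, and the next generation must be found inside $B_n^\delta$ with controlled mass. My first attempt is the self-similarity renormalisation described above: replace $B_n^\delta$ by a cylinder $f_w(K)$ inside it and rerun the Egorov argument on the image $f_w(E_{\rho_0})$. This works because the proportional definition of $\nu$ only ever uses the ratio $\mu(\text{subset of }F)/\mu(F)$, which is scale-invariant under $f_w$. The delicate point is that $p_w$ is much smaller than $c_w^{\alpha}$ for atypical $w$, so the argument must verify that only these ratios, and never absolute masses of pieces, enter the final estimate.
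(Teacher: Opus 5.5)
The paper gives no proof of this statement; it is quoted from \cite{ED3}, so your argument has to stand on its own. The architecture is reasonable: a Cantor construction with the mass distribution principle, using self-similarity to plant a rescaled copy of $\mu$ inside each shrunk ball. However, the renormalisation step, which you yourself flag as delicate, uses the wrong measure. As written it fails whenever the IFS has overlaps, and the statement covers arbitrary self-similar measures; the paper applies it to IFSs that may overlap.

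\textbf{The gap.} You work with $\mu$ restricted to $F=f_w(K)$ and assert that $\mu(\,\cdot\,\cap F)/\mu(F)$ is invariant under $f_w$. That holds only when $\mu|_F=\mu(F)\,\mu\circ f_w^{-1}$, essentially under the open set condition. In general $\mu|_F$ also contains the contributions $p_v\,\mu\circ f_v^{-1}|_F$ of all other words $v$ with $|v|=|w|$ whose cylinders meet $F$. All you have is the one-sided bound $\mu\geq p_w\,\mu\circ f_w^{-1}$. Consequently:
\begin{itemize}
\item The transported Egorov estimates give only $\mu(B(f_w(x),c_wr))\geq p_w r^{\alpha+\varepsilon}$ on $f_w(E_{\rho_0})$. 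They give no upper bound for $\mu$, so neither $\mu(B_{n_j})\lesssim|B_{n_j}|^{\alpha-\varepsilon}$ nor $\mu(B(x,2r))\leq r^{\alpha-\varepsilon}$ is justified.
\item $\mu(f_w(E_{\rho_0}))/\mu(F)\geq p_w/(2\mu(F))$ is not a fixed fraction.
\end{itemize}

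Separately, your third regime drops a prefactor. From $\nu(F_i)=\nu(F)\mu(B_{n_i})/\sum_l\mu(B_{n_l})$ you only get $\nu(B(x,r))\lesssim\frac{\nu(F)}{\mu(F)}\,\mu(B(x,3r))$. The ratio $\nu(F)/\mu(F)$ cannot be made subpolynomial: even for typical $F$ it is of order $|F|^{\alpha/\delta-\alpha}$, which is the whole point of the construction. Scale separation absorbs this factor only at scales far below $|F|$, as in your bound for $\nu(F_j)$. For $r$ comparable to $|F|$ it must be compensated by a \emph{relative} decay $(r/|F|)^{\alpha-\varepsilon}$ of the mass of $B(x,r)$ inside $F$. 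That is exactly what $\mu|_F/\mu(F)$ fails to provide on atypical or overlapped cylinders.

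\textbf{The repair.} Use the normalised push-forward $\mu_w=\mu\circ f_w^{-1}$ inside $F$ throughout, never $\mu|_F$.
\begin{itemize}
\item Since $\mu\geq p_w\mu_w$, one has $\mu_w(\limsup_n B_n)=1$.
\item Egorov transfers exactly: $\mu_w(B(f_w(x),s))=\mu(B(x,s/c_w))\in[(s/c_w)^{\alpha+\varepsilon},(s/c_w)^{\alpha-\varepsilon}]$ for $x\in E_{\rho_0}$ and $s\leq c_w\rho_0$.
\item Select disjoint balls $B'_j\supset B_{n_j}$ centred on $f_w(E_{\rho_0})$ that carry a fixed proportion of $\mu_w$-mass. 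Besicovitch in $\mathbb{R}$ avoids any doubling issue. Then split $\nu(F)$ according to the masses $\mu_w(B'_j)$.
\end{itemize}
With this choice $p_w$ never appears. Once $|B_{n_j}|$ is tiny compared to $|F|$, you get $\nu(F_j)\lesssim\nu(F)(|B_{n_j}|/c_w)^{\alpha-\varepsilon}\leq|B_{n_j}|^{\alpha-2\varepsilon}$, hence $\nu(F_j)\lesssim|F_j|^{(\alpha-2\varepsilon)/\delta}$. For $|B_{n_j}|\leq r\leq|F|$ you get $\nu(B(x,r))\lesssim|F|^{(\alpha-2\varepsilon)/\delta}(r/|F|)^{\alpha-\varepsilon}\leq r^{(\alpha-2\varepsilon)/\delta}$, which is where $\delta\geq 1$ enters.

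\textbf{Smaller points.}
\begin{itemize}
\item Egorov gives the estimates for some small $\rho_0$, not for every $\rho_0$.
\item $F$ may have empty interior, so ``$5B_{n_j}\subset F$'' must be replaced by a neighbourhood of $F$, chosen so that the pieces stay nested.
\item The trivial case $|B_n|\not\to 0$ should be set aside first.
\end{itemize}
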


Since
\[
\mu_\varepsilon\Big(
\limsup_{k\in\mathbb{N},\,\underline{i}=(i_1,\dots,i_n)\in\Lambda_{r_k/c_1}}
B\Big(\pi\big((i_1,\dots,i_n),(i_n)_\infty\big),\frac{r_k}{c_1}\Big)
\Big)=1,
\]
the mass transference principle yields
\begin{align*}
\dim_H
\limsup_{k\in\mathbb{N},\,\underline{i}=(i_1,\dots,i_n)\in\Lambda_{r_k/c_1}}
B\Big(\pi\big((i_1,\dots,i_n),(i_n)_\infty\big),
\Big(\frac{r_k}{c_1}\Big)^{(1+2\varepsilon)\delta_{\eta}}\Big)
\geq
\frac{\dim_H K-\varepsilon}{(1+2\varepsilon)\delta_{\eta}}.
\end{align*}

Note that for any $\underline{i}\in\Lambda_{r_k/c_1}$ one has
$|f_{\underline{i}}(K)|\geq r_k$.
Hence, by monotonicity of $\eta$, for all sufficiently large $k\in\mathbb{N}$,
\begin{equation}
\label{EquMono}
\eta(|f_{\underline{i}}(K)|)
\geq \eta(r_k)
\geq r_k^{\delta_{\eta}(1+\varepsilon)}
\geq \Big(\frac{r_k}{c_1}\Big)^{\delta_{\eta}(1+2\varepsilon)} .
\end{equation}

This implies that
\begin{align*}
\dim_H
\limsup_{\underline{i}=(i_1,\dots,i_n)\in\Lambda^*}
\bigcup_{0\leq k\leq n-1}
B\Big(\pi\big((i_1,\dots,i_n),(i_{k+1},\dots,i_n)_\infty\big),
\eta(|f_{\underline{i}}(K)|)\Big)
\geq
\frac{\dim_H K-\varepsilon}{(1+2\varepsilon)\delta_{\eta}}.
\end{align*}
Letting $\varepsilon\to 0$ concludes the proof of Theorem~\ref{ThmUPAWSC}.

\section{Some perspectives}
\label{Secpersp}

\subsection{About the connection between equidistributuion mod q and Mahler's problem}
A careful reader will notice, that in order to prove Theorem \ref{MahlConjO}, one would only need to prove Conjecture \ref{ConjectureO} for mappings $f_{\epsilon}(q)=\frac{\varepsilon}{\sqrt{\log\log \log q}},$ where $\varepsilon>0,$ this rate being given Theorem \ref{ThmBourg}. Thus a more precise conjecture, yielding the conclusion of Theorem \ref{MahlConjO}, is the following.

\begin{conjecture}
Let $b\geq 2$ be an integer. There exists a non increasing  mapping $f:\mathbb{N}\to \mathbb{R}_+$ such that:
\begin{itemize}
\item[(1)] for every $q\in\mathbb{N},$ $q\wedge b=1,$ $$\Big(O_{q}(b)\geq q^{f(q)}\Big)\Rightarrow \Big(\Big\vert\max_{p\in\mathbb{Z}_q^*}\sum_{0\leq k\leq O_q(b)-1}\exp(2i\pi\frac{pb^k}{q})\Big\vert \leq O_{q}(b)o_{q\to +\infty}(1)\Big),$$
\item[(2)] one has $$\lim_{x\to +\infty}\frac{\log \#\left\{1\leq q \leq x, q\wedge b=1 : \ O_q(b)\leq q^{f(q)}\right\}}{\log x}=0.$$

\end{itemize}

\end{conjecture}

\subsection{Extensions of Theorem \ref{ThmMahlerConj} to more general rational IFSs}

%

Regarding IFSs of the form
$T=\left\{f_i(\cdot)=r_i \cdot +r'_i\right\}_{1\leq i\leq m}$,
where $r_i \in (-1,1)\cap \mathbb{Q}\setminus \{0\}$, which can always be rewritten as
\[
T=\left\{f_i(\cdot)=\frac{t_i \cdot}{q_i}+\frac{p_i}{q_i}\right\}_{1\leq i\leq m},
\]
with $q_1,\dots,q_m,t_1,\dots,t_m,p_1,\dots,p_m \in\mathbb{Z}$ and $1\leq |t_i|<|q_i|$ for every $1\leq i\leq m$. For such an IFS, it is easily checked any ultimately periodic sequence produces a rational. But, in these  settings, it follows from \cite{ShmidtSalem} that it is no longer true in general that every  rational in such fractals   admits an eventually periodic coding. We refer to the Appendix, Subsubsection \ref{SubsecRatCod} for more details about this fact. Therefore, studying this problem requires a new description of the possible codings of rationals in this setting.

\subsection{A more refined full-measure statement for homogeneous rational IFSs}

Let
$T=\left\{f_i(\cdot)=\frac{\cdot}{q}+\frac{p_i}{q}\right\}$,
where $q\in \mathbb{N}$ and $p_1,\dots,p_m \in\mathbb{Z}$, and let $K_T$ denote its attractor. As mentioned in Section~\ref{SecSS}, these IFSs always satisfy the weak separation condition, and hence are Ahlfors-regular (see Proposition \ref{WSCAlfh}). By \cite{osc=sosc}, this implies that
\[
\dim_H K_T =\frac{\log m}{\log q}
\quad\Longleftrightarrow\quad
T \text{ satisfies the open set condition}.
\]

In the case where $T$ satisfies the open set condition, it was proved in \cite{TWWTriad,Baker} that, denoting by $\mu_T$ the Ahlfors-regular self-similar measure on $K_T$, for any non-increasing $\psi:\mathbb{N}\to \mathbb{R}_+$ and setting
\[
E_{\psi, T, \text{int}}:=\Big\{x\in K_T : |x-r|\leq \psi(h_{\text{int}}(r))\ \text{for infinitely many } r\in \mathbb{Q}\cap K_T\Big\},
\]
one has
\begin{equation*}
\begin{cases}
\mu_T(E_{\psi, T, \text{int}})=1 \iff \displaystyle\sum_{n\geq 1} n m^n \psi(q^n)^{\dim_H K_T}=+\infty,\\[1ex]
\mu_T(E_{\psi,T,\text{int}})=0 \iff \displaystyle\sum_{n\geq 1} n m^n \psi(q^n)^{\dim_H K_T}<+\infty.
\end{cases}
\end{equation*}

A motivated reader can check that the argument we provide in Section~\ref{Secproofupper}, in this particular case (replacing the box-dimension argument by the use of the Ahlfors-regular measure), only yields that
\[
\sum_{n\geq 1} n^2 m^n \psi(q^n)^{\dim_H K_T}<+\infty
\quad\Longrightarrow\quad
\mathcal{H}^{\dim_H K_T}(E_{\psi,T,\text{int}})=0 .
\]

Thus, we raise the following question: is it true that
\begin{equation*}
\mathcal{H}^{\dim_H K_T}(E_{\psi,T,\text{int}})>0
\iff \displaystyle\sum_{n\geq 1} n m^n \psi(q^n)^{\dim_H K_T}=+\infty?
\end{equation*}

Finally, we mention that another class of self-similar IFSs known to satisfy the WSC is given by Bernoulli convolutions associated with inverses of Pisot numbers. In this particular case, although the attractor is an interval, it is conjectured (see \cite{Bremont}) that all self-similar measures are non-Rajchman, which implies in particular that these measures are singular with respect to Lebesgue measure. Thus, in the above statement, one might not, in general, be able to replace the Hausdorff measure $\mathcal{H}^{\dim_H K_T}$ by a self-similar measure of full dimension.

\section{Appendix}

\subsection{Rational numbers in self-similar sets}
\label{SubSecAppRat}
This sub-section aims at justifying that it is not generic in general for self-similar sets to contain rationals (this is the subject of the next subsubsection) and to make a short state of the art of the existence and possible codings of rational in  self-similar sets associated with self-similar IFS defined by rational parameters.
\subsubsection{Rational numbers in  translations of one-dimensional homogeneous fractals}

Fix $\lambda\in(-1,1)\setminus\{0\}$ and $a_1,\dots,a_m\in\mathbb{R}$, and write
$T=\{f_i(\cdot)=\lambda \cdot+a_i\}_{1\leq i\leq m}$.
Let $K$ be the attractor of $T$. Recall that
\[
K=\Big\{\sum_{k\geq 1}a_{i_k}\lambda^{k-1}\Big\}_{(i_k)_{k\in\mathbb{N}}\in\{1,\dots,m\}^\mathbb{N}}.
\]

Fix $y\in\mathbb{R}$ and write
$T_y=\{f_i(x)=\lambda x+a_i+(1-\lambda)y\}_{1\leq i\leq m}$.
The attractor $K_y$ of $T_y$ is given by
\[
K_y
=
\Big\{\sum_{k\geq 1}(a_{i_k}+(1-\lambda)y)\lambda^{k-1}\Big\}_{(i_k)_{k\in\mathbb{N}}\in\{1,\dots,m\}^\mathbb{N}}
=
y+K.
\]

\begin{proposition}
\[
\begin{cases}
\text{if }\mathcal{L}(K)>0,\text{ then for a.e.\ }y\in\mathbb{R},\ \mathbb{Q}\cap(y+K)\neq\emptyset,\\
\text{if }\mathcal{L}(K)=0,\text{ then for a.e.\ }y\in\mathbb{R},\ \mathbb{Q}\cap(y+K)=\emptyset.
\end{cases}
\]
\end{proposition}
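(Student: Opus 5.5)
The two statements are handled separately, and both reduce to Fubini arguments over the translation parameter $y$.

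For the case $\mathcal{L}(K)=0$, I use the decomposition
\[
\{y:\ \mathbb{Q}\cap (y+K)\neq\emptyset\}=\bigcup_{r\in\mathbb{Q}}(r-K).
\]
Each $r-K$ is a reflected translate of $K$, so it is Lebesgue-null. A countable union of null sets is null, which gives the second assertion directly.

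For the case $\mathcal{L}(K)>0$, I argue in five steps.

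\begin{enumerate}
\item By Lebesgue's density theorem, $K$ has density points.
\item By self-similarity, $K=\bigcup_{\underline{i}\in\Lambda^n}f_{\underline{i}}(K)$. Each piece $f_{\underline{i}}(K)$ is a scaled copy of $K$ with ratio $|\lambda|^n$ and therefore has positive measure.
\item The aim is to show that the bad set $G=\{y:\ (y+K)\cap\mathbb{Q}=\emptyset\}$ is null. Note that $y\in G$ iff $y\notin \bigcup_{r}(r-K)$. So $G$ is the complement of $U=\mathbb{Q}-K$, and $U$ is invariant under rational translations. A measurable set invariant under all rational translations is either null or conull; this is the standard ergodicity of $\mathbb{Q}$-translations on $\mathbb{R}$, proved via density points or Fourier. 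Since $U\supset -K$ has positive measure, $U$ is conull, hence $G$ is null.
\item To verify the zero-one step concretely, I fix an interval $J$ and take $\varepsilon>0$. I pick a density point of $-K$ and an interval $I$ around it with $\mathcal{L}(I\cap(-K))\geq(1-\varepsilon)\mathcal{L}(I)$, where $\mathcal{L}(I)$ can be taken to be $1/N$ with $N$ large. Translating $I$ by the rationals $j/N$ covers $J$ up to boundary effects. Since $U\supset \bigcup_j (j/N + (-K))$, this gives $\mathcal{L}(J\setminus U)\leq \varepsilon\,\mathcal{L}(J)+O(1/N)$.
\item Letting $\varepsilon\to0$ and $N\to\infty$ yields $\mathcal{L}(J\setminus U)=0$.
\end{enumerate}

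I expect no real obstacle. The only point needing care is step 4: the interval lengths must be chosen of the form $1/N$, so that the rational translates tile $J$ exactly and the density estimate transfers to all of $J$.
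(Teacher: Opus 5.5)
Your proof is correct and is essentially the paper's argument: both take a density point of $K$ (or $-K$) and use rational translates of a small neighbourhood of it. The only difference is the finish. You tile $J$ directly with the translates by $j/N$, which needs one application of the density theorem at density close to $1$. The paper instead uses one rational per small ball to show that $\mathbb{Q}-K$ has lower density at least $1/4$ everywhere at small scales, and then applies the density theorem a second time to its complement. Your step 2 is never used: neither argument needs anything about $K$ beyond measurability and $\mathcal{L}(K)>0$.
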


\begin{proof}
Note that $y+K\cap\mathbb{Q}\neq\emptyset$ if and only if
\[
y\in\bigcup_{q\in\mathbb{Q}}(q-K).
\]
If $\mathcal{L}(K)=0$, then
\[
\mathcal{L}\Big(\bigcup_{q\in\mathbb{Q}}(q-K)\Big)=0,
\]
which proves the second statement.

Assume now that $\mathcal{L}(K)>0$.
By  Besicovitch density theorem \cite[Theorem 3]{Be}, there exist $x\in K$ and $r_x>0$ such that
for any $0<r\leq r_x$,
\[
\frac{\mathcal{L}(B(x,r)\cap K)}{\mathcal{L}(B(x,r))}\geq \frac12.
\]
Fix $z\in\mathbb{R}$ and $r\leq r_x/2$.
Since $\{q-x\}_{q\in\mathbb{Q}}$ is dense, there exists $q\in\mathbb{Q}$ such that
$q-x\in B(z,r/2)$.
Hence
\[
B(q-x,r/2)\cap(q-K)\subset B(z,r)\cap(q-K),
\]
and
\begin{align*}
\mathcal{L}\big(B(q-x,r/2)\cap(q-K)\big)
&=\mathcal{L}\big(B(x,r/2)\cap K\big)
\geq \frac{r}{4}
= \frac14\,\mathcal{L}(B(z,r)).
\end{align*}
Therefore, for any $z\in\mathbb{R}$ and any $r\leq r_x$,
\[
\mathcal{L}\Big(B(z,r)\cap\bigcup_{q\in\mathbb{Q}}(q-K)\Big)
\geq \frac14\,\mathcal{L}(B(z,r)),
\]
which implies, again by  Besicovitch density theorem \cite[Theorem 3]{Be}, that 
\[
\mathcal{L}\Big(\mathbb{R}\setminus\bigcup_{q\in\mathbb{Q}}(q-K)\Big)=0.
\]
This concludes the proof.
\end{proof}

\subsubsection{Rationals and self-similar IFS's defined by  rational parameters}
\label{SubsecRatCod}
Let $\lambda_1,...,\lambda_m \in \mathbb{Q}\cap \Big((-1,1)\setminus \left\{0\right\}\Big),$ $a_1,....,a_m \in \mathbb{Q}$ be rationals and set $$T=\left\{f_i(\cdot)=\lambda_i \cdot +a_i\right\}_{1\leq i\leq m}.$$ For every $n\in\mathbb{N},$ for every $0\leq k\leq n-1,$ for every $(i_1,...,i_n)\in\left\{1,...,m\right\}^n,$ 

\begin{align*}
&\pi\Big((i_1,...,i_n),(i_{k+1},...,i_n)_{\infty}\Big)\\
&=\sum_{1\leq t\leq n} \Big(\prod_{1\leq j<t}\lambda_{i_j}\Big)a_{i_t}+ \Big(\prod_{1\leq j \leq n}\lambda_{i_j}\Big)\frac{\sum_{k+1 \leq t \leq n}\Big(\prod_{k+1 \leq j <t}\lambda_{i_j}\Big)a_{i_j}}{1 -\prod_{k+1 \leq j \leq n}\lambda_{i_j}}\in \mathbb{Q}.
\end{align*}

Hence  $K_T$ always contains rational numbers. It is not true in general that, for any such $T$, rationals in $ K_T$ corresponds to projections of ultimately periodic sequences. This is due to the following result.
\begin{theoreme}[]
Let $\lambda\in (0,1)$ and $T=\left\{0,...,\lfloor \frac{1}{\lambda}\rfloor \right\}.$ Then 
\begin{align*}
&\mathbb{Q}(\lambda)\cap K_T \subset\left\{\pi\Big((i_1,...,i_n),(i_{k+1},...,i_n)_{\infty}\Big)\right\}_{0\leq k\leq n-1, n \in\mathbb{N}, (i_1,...,i_n)\in\left\{1,...,m\right\}^n}\\
&\Longrightarrow (\frac{1}{\lambda}\text{ is Pisot or Salem}).
\end{align*}

\end{theoreme}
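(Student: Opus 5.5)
The plan is to follow Schmidt's argument, as in \cite{ShmidtSalem}. Write $\beta=1/\lambda>1$ and $d=\lfloor \beta\rfloor$. Every point of $K_T$ has the form $\sum_{k\geq 1} a_k\beta^{-k}$ with digits $a_k\in\{0,\dots,d\}$. (Up to the harmless normalisation of the digit set $\{0,\dots,d\}$ versus the indices $\{1,\dots,m\}$, this is what the canonical projection $\pi$ gives.) Since $d+1\geq \beta$, the images of $[0,d/(\beta-1)]$ under the maps cover it, so $K_T=[0,d/(\beta-1)]$. I would prove the contrapositive in two steps: first that $\beta$ is an algebraic integer, then that no conjugate of $\beta$ other than $\beta$ itself has modulus $>1$. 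Together these say exactly that $\beta$ is Pisot or Salem (or a rational integer, which is Pisot).

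\emph{Step 1: $\beta$ is an algebraic integer.} Pick the rational $x=\frac12 \cdot\frac{d}{\beta-1}$ when $\beta$ is transcendental (any rational interior point of $K_T$ works, and $\mathbb{Q}\subset\mathbb{Q}(\lambda)$). By hypothesis $x$ has an ultimately periodic coding with preperiod of length $n$ and period of length $p\geq 1$. Summing the geometric tail gives
\[
x=\frac{P(\beta)}{\beta^{n}(\beta^{p}-1)},
\]
where $P\in\mathbb{Z}[X]$ has nonnegative coefficients bounded by $d$ and $\deg P\leq n+p-1$; this degree bound is the one point to check carefully, since the digit of index $k$ contributes $\beta^{n+p-k}$ with $k\geq1$. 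Clearing the denominator of $x\in\mathbb{Q}$, say $x=u/v$, gives $vP(\beta)-u\beta^n(\beta^p-1)=0$. This is a nonzero integer polynomial relation, so $\beta$ is algebraic. To get integrality cleanly, I would instead take $x\in\mathbb{Z}[\beta]$ in the interior, for instance $x=1$, which lies in $K_T$ since $d/(\beta-1)\geq 1$. Then $P(\beta)-\beta^n(\beta^p-1)=0$, and the left side is a polynomial with leading coefficient $-1$ in degree $n+p$. Hence $\beta$ is an algebraic integer. If $\beta\in\mathbb{Z}$ we are done.

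\emph{Step 2: conjugates.} Suppose $\gamma\neq\beta$ is a Galois conjugate with $|\gamma|>1$, and let $\sigma:\mathbb{Q}(\beta)\to\mathbb{C}$ be the field embedding with $\sigma(\beta)=\gamma$. For any $x\in\mathbb{Q}(\beta)\cap K_T$ with an ultimately periodic coding $(a_k)$, the identity above is an identity in $\mathbb{Q}(\beta)$. Applying $\sigma$ and re-expanding the geometric series (which converges since $|\gamma|>1$) gives
\[
\sigma(x)=\sum_{k\geq1}a_k\gamma^{-k},\qquad \text{so}\qquad |\sigma(x)|\leq M:=\frac{d}{|\gamma|-1}.
\]
It therefore suffices to exhibit $x\in\mathbb{Q}(\beta)\cap K_T$ with $|\sigma(x)|>M$, contradicting the hypothesis. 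Take $x=a+b\beta$ with $a,b\in\mathbb{Q}$. Fix a target $x_0\in(0,d/(\beta-1))\cap\mathbb{Q}(\beta)$, choose $b\in\mathbb{Q}$ with $|b|$ huge, and set $a=x_0-b\beta$; then $a\notin\mathbb{Q}$, so this must be adjusted. Instead I take $b\in\mathbb{Q}$ large and choose $a\in\mathbb{Q}$ so that $a+b\beta$ lands in the open interval $K_T$, which is possible by density of $\mathbb{Q}$. Then $a+b\gamma=(a+b\beta)+b(\gamma-\beta)$, and the first term is bounded. If $\gamma$ is real then $|b(\gamma-\beta)|\to\infty$; if $\gamma\notin\mathbb{R}$ then $|\mathrm{Im}(a+b\gamma)|=|b|\,|\mathrm{Im}\gamma|\to\infty$. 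Either way $|\sigma(x)|>M$ for $|b|$ large, which is the desired contradiction.

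The main obstacle is really only bookkeeping. One must make sure that the hypothesis is applied to elements of $\mathbb{Q}(\lambda)=\mathbb{Q}(\beta)$, that the closed-form identity for an ultimately periodic coding is a genuine field identity, so that $\sigma$ may be applied to it, and that the degree count in Step 1 really yields a monic relation. Once $\beta$ is known to be an algebraic integer whose other conjugates all have modulus $\leq 1$, the conclusion is by definition that $1/\lambda$ is Pisot (all other conjugates of modulus $<1$) or Salem (some of modulus exactly $1$).
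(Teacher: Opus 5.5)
Your proof is correct, but the paper takes no route of its own to compare with: it gives no proof here and only attributes the statement to Schmidt \cite{ShmidtSalem}.

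Schmidt's theorem concerns greedy $\beta$-expansions and assumes eventual periodicity only for rational points of $[0,1)$. His argument therefore cannot use test points in $\mathbb{Q}(\beta)\setminus\mathbb{Q}$, and has to work with the structure of greedy expansions. You go the other way. You allow arbitrary eventually periodic codings, which is what the statement as written actually assumes. Passing from ``some coding is eventually periodic'' to ``the greedy expansion is eventually periodic'' is not automatic, so the citation does not literally cover the formulation given. In exchange, you use the hypothesis on the whole field. Testing it on $x=1$ gives a monic integer relation. Testing it on $x=a+b\beta$ with $|b|$ large forces every conjugate $\gamma\neq\beta$ into the closed unit disc, via the bound $|\sigma(x)|\leq d/(|\gamma|-1)$. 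Your route is elementary and self-contained. Schmidt's buys the sharper fact that rationals alone suffice, at the cost of being tied to the greedy algorithm. For the paper's application ($\lambda=2/3$), only your Step~1 with the single rational $x=1$ is needed.

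A few small corrections:
\begin{itemize}
\item The numerator is
\[
P(X)=\sum_{k=1}^{n+p}a_kX^{n+p-k}-\sum_{k=1}^{n}a_kX^{n-k},
\]
whose coefficients lie in $\{-d,\dots,d\}$ and need not be nonnegative. You only use $P\in\mathbb{Z}[X]$ and $\deg P\leq n+p-1$, so nothing breaks.
\item Delete the first choice $x=\frac{d}{2(\beta-1)}$; it is not rational when $\beta$ is transcendental.
\item Before dividing by $\gamma^n(\gamma^p-1)$, note that it is nonzero because $|\gamma|>1$.
\item The real/non-real case distinction is superfluous, since $|\sigma(x)|\geq |b|\,|\gamma-\beta|-\frac{d}{\beta-1}$ for every $\gamma\neq\beta$.
\item Your normalisation is harmless because $\mathbb{Q}(\lambda)$ is stable under multiplication by $\beta$. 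Under the convention $\pi(a)=\sum_k a_k\beta^{-(k-1)}$, the point $x=1$ has the trivial coding $(1,0,0,\dots)$ and yields nothing; there you should test $x=\beta$ instead.
\end{itemize}
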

In particular, for $\lambda=\frac{2}{3},$ $\mathbb{Q}(\lambda)=\mathbb{Q}$ and $\frac{1}{\lambda}$ is not an algebraic integer  (thus is not Pisot nor Salem). Hence there must exist $r\in\mathbb{Q}\cap K_T$ which does not admit an ultimately periodic coding.

As mentioned in the introduction, in the special case where $T$ is of the form $T=\left\{f_i(\cdot)=\frac{\cdot}{q_i}+\frac{p_i}{q_i}\right\}_{1\leq i\leq m},$ where $q_i,p_i$ are integers, it was proved by Fishman and Simmons , when $T$ satisfies the open set condition, that the rationals 
in $K_T$ corresponds to eventually periodic sequences. Later on, in  \cite{Baker}, Baker remarked (but did not prove explicitly) that the assumption on the separation condition can be removed. For the sake of completeness, we prove this fact. 
\begin{proposition}[\cite{FishSim,Baker}]
\label{ThmRatCod}
Let $q_1,...,q_m \geq 2$ and $p_1,...,p_m \in\mathbb{Z}$ be integers and set $T=\left\{f_i(\cdot)=\frac{\cdot}{q_i}+\frac{p_i}{q_i}\right\}.$ Then $r\in\mathbb{Q}\cap K_T$ if and only if there exists $n\in\mathbb{N},$ $0\leq k\leq n-1$ and $(i_1,...,i_n)\in\left\{1,...,m\right\}^n$ such that $$x=\pi\Big((i_1,...,i_n,(i_{k+1},...,i_n)_{\infty})\Big).$$
\end{proposition}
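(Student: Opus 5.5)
The easy direction is a direct computation. If $r=\pi((i_1,\dots,i_n,(i_{k+1},\dots,i_n)_\infty))$, then $r\in K_T$ by definition of $\pi$, and $r$ is rational by the explicit formula displayed just before the statement.

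For the converse, fix $r\in\mathbb{Q}\cap K_T$. Choose a coding $(j_t)_{t\ge 1}$ with $\pi((j_t)_t)=r$, and let $\sigma$ denote the shift. Set $x_t=\pi(\sigma^t (j))$, so $x_0=r$ and
\[
x_{t}=q_{j_{t}}x_{t-1}-p_{j_{t}}.
\]
Each $x_t$ therefore lies in $K_T$.

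The key step is to show that the set $\{x_t\}_{t\ge 0}$ is finite. Write $r=a/d$ with $d\ge 1$. Since the $q_{j}$ and $p_j$ are integers, induction gives $x_t\in \frac{1}{d}\mathbb{Z}$ for all $t$. All $x_t$ also lie in the compact set $K_T\subset[-M,M]$. Hence $x_t$ takes values in the finite set $\frac1d\mathbb{Z}\cap[-M,M]$, which has at most $2Md+1$ elements.

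Next I pass from finitely many points to an eventually periodic coding. The obstacle is that a repetition $x_s=x_t$ with $s<t$ does not by itself make the sequence $(j_t)$ periodic, because the tails after $s$ and after $t$ may be different codings of the same point. I handle this by re-choosing the coding. Pick $s<t$ with $x_s=x_t$ and put $w=(j_{s+1},\dots,j_t)$. Then
\[
f_{w}(x_t)=x_s=x_t,
\]
so $x_t$ is the unique fixed point of the contraction $f_w$, which is $\pi(w_\infty)$. Therefore
\[
r=f_{(j_1,\dots,j_s)}(x_s)=f_{(j_1,\dots,j_s)}(\pi(w_\infty))=\pi\big((j_1,\dots,j_s,w,w,\dots)\big).
\]
This is of the required form with $n=t$ and $k=s$. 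The case $s=0$ needs the first block to be nonempty; I handle it by unfolding once, writing the sequence as $(w,w_\infty)$ with $n=2(t-s)$ and $k=t-s$, so that $0\le k\le n-1$ holds.

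The only genuinely delicate point was the integrality (bounded denominators) of the orbit, which is exactly where the assumption that the maps have the form $x/q_i+p_i/q_i$ with integer $q_i,p_i$ is used. No separation condition is needed.
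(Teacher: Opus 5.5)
Your proof is correct and follows essentially the same route as the paper: the backward orbit $f_{j_t}^{-1}\circ\cdots\circ f_{j_1}^{-1}(r)$ stays in $K_T\cap\frac{1}{d}\mathbb{Z}$, which is finite, so some point repeats. That repeated point is then the fixed point of $f_{(j_{s+1},\dots,j_t)}$, which yields the new eventually periodic coding $r=\pi\big((j_1,\dots,j_s,(j_{s+1},\dots,j_t)_\infty)\big)$. The only superfluous step is your unfolding for $s=0$, since the statement already allows $k=0$.
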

\begin{proof}
As $T$ has rational parameters, one only needs to prove that if $r\in\mathbb{Q}\cap K_T,$ there exists $n\in\mathbb{N},$ $0\leq k\leq n-1$ and $(i_1,...,i_n)\in\left\{1,...,m\right\}^n$ such that $$r=\pi\Big((i_1,...,i_n,(i_{k+1},...,i_n)_{\infty})\Big).$$
Fix $r=\frac{a}{b}\in\mathbb{Q}\cap K_T$ and  $(j_n)_{n\in\mathbb{N}}\in\left\{1,...,m\right\}^\mathbb{N}$ such that $$\pi\Big((j_n)_{n\in\mathbb{N}}\Big)=r.$$
For any $1\leq i\leq m,$ 
$$f_i^{-1}(r)=q_i r -p_i \in\frac{\mathbb{Z}}{b}.$$
Hence, for any $t\in\mathbb{N},$ $f_{j_t}^{-1}\circ...\circ f_{j_1}^{-1}(r)\in K_T \cap \frac{\mathbb{Z}}{b}.$
Thus there exists $n\in\mathbb{N}$ and $0\leq k\leq n-1$ such that $$f_{j_n}^{-1}\circ...f_{j_i}^{-1}(r)=f_{j_k}^{-1}\circ...f_{j_i}^{-1}(r).$$
In particular, $x=f_{j_k}^{-1}\circ...\circ f_{j_1}^{-1}(r)$ is a fixed point of $f_{j_{n}}^{-1}\circ...\circ f_{j_{k+1}}^{-1},$ hence is a fixed point of $f_{j_{k+1}}\circ...\circ f_{j_n}.$ Writing $g=f_{j_{k+1}}\circ...\circ f_{j_n},$ $$\lim_{t\to +\infty}g^{(t)}(0)=x,$$
where $g^{(t)}$  is $g$ composed $t$ times. This yields $$x=\pi\Big((j_{k+1},...,j_n)_{\infty}\Big)\Rightarrow r=\pi\Big((j_1,...,j_k,(j_{k+1},...,j_n)_{\infty})\Big),$$
which proves the claim.
\end{proof}

\subsection{WSC and Ahlfors regularity}

Let $T=\{f_1,\dots,f_m\}$ be a self-similar IFS satisfying WSC
(Definition~\ref{DefWSC}), let $K_T$ be its attractor, and let
$0<c_1\leq\cdots\leq c_m<1$ be the contraction ratios of $f_1,\dots,f_m$.

We prove Proposition~\ref{WSCAlfh}.

Fix $x\in K_T$ and $0<r\leq c_1$. Then there exists
$\underline{i}\in\bigcup_{k\geq 1}\{1,\dots,m\}^k$ such that
\[
\begin{cases}
f_{\underline{i}}(K)\subset B(x,r),\\
|f_{\underline{i}}(K)|\geq c_1 r.
\end{cases}
\]
Since $f_{\underline{i}}$ is a homothetic map with contraction ratio
$c_{\underline{i}}=|f_{\underline{i}}(K)|/|K|$, recalling
Theorem~\ref{ThmZerner}, we obtain
\[
\mathcal{H}^{\dim_H K_T}(B(x,r))
\geq c_{\underline{i}}^{\dim_H K_T}\mathcal{H}^{\dim_H K_T}(K_T)
\geq \kappa_1 r^{\dim_H K_T}.
\]

On the other hand, since
$$K_T\cap B(x,r)\subset\bigcup_{\underline{i}\in\Lambda_{B(x,r)}}f_{\underline{i}}(K),$$
we have
\begin{align*}
\mathcal{H}^{\dim_H K_T}(B(x,r))
&\leq
\kappa_2
\sup_{y\in K_T,\rho>0}\#\Lambda_{B(y,\rho)}\,
\mathcal{H}^{\dim_H K_T}(K_T)\, r^{\dim_H K_T}\\
&\leq \kappa_3 r^{\dim_H K_T}.
\end{align*}

\subsection{About Remark \ref{Remequi}}
In this section, we prove the following.
\begin{proposition}
Let $T$ be a self-similar IFS of the form $$T=\Big\{f_i(\cdot)=\frac{\cdot}{b}+\frac{p_i}{b}\Big\}_{1\leq i\leq m},$$ where $b\geq 2$ and $p_1,....,p_m \in\mathbb{Z} $ and write $K_T$ its attractor. The three following items are  equivalent:
\begin{itemize}
\item[(1)] $\mathcal{L}(K_T)>0,$
\medskip
\item[(2)] $K_T$ has non empty interior,
\medskip
\item[(3)] $\dim_H K_T =1.$
\end{itemize}
\end{proposition}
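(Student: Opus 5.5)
Writing $s=\dim_H K_T$, I will prove the chain of implications $(2)\Rightarrow(1)\Rightarrow(3)\Rightarrow(2)$.

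The implication $(2)\Rightarrow(1)$ is immediate, since a nonempty open set has positive Lebesgue measure.

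The implication $(1)\Rightarrow(3)$ is also standard. A set of positive Lebesgue measure in $\mathbb{R}$ has Hausdorff dimension $1$, because $\mathcal{H}^1$ coincides with $\mathcal{L}$ up to a constant. As $K_T\subset\mathbb{R}$, we get $s=1$.

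The substantial step is $(3)\Rightarrow(2)$, and I will use the separation structure here. By Section~\ref{SecSS}, $T$ satisfies the WSC. Theorem~\ref{ThmZerner} then gives $0<\mathcal{H}^{1}(K_T)<+\infty$, so $\mathcal{L}(K_T)>0$; in particular $(3)\Rightarrow(1)$ already holds. To upgrade positive measure to nonempty interior I will use the integer structure. Write $K_T=\{\sum_{k\geq 1}p_{i_k}b^{-k}\}$. The $n$-th level pieces are the intervals
\[
K_T=\bigcup_{a\in A_n}\Big(\frac{a}{b^n}+\frac{K_T}{b^n}\Big),\qquad A_n=\Big\{\sum_{k=1}^n p_{i_k}b^{n-k}\Big\}\subset\mathbb{Z}.
\]
Since $K_T\subset[\min p_i/(b-1),\max p_i/(b-1)]$ is contained in an interval of length $L$ independent of $n$, and the translates $a/b^n$ lie on the grid $b^{-n}\mathbb{Z}$, we obtain the following.

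\emph{Key step.} By a Lebesgue density argument, pick a density point $x$ of $K_T$. Blowing up by $b^n$ around $x$, the set $b^n(K_T-x)$ restricted to $[-1,1]$ is a union of integer translates of $K_T$ (from a set of at most $L+2$ candidates, shifted by $-b^nx$). Its density in $[-1,1]$ tends to $1$. Passing to a subsequence along which both the finite configuration of integer translates $F_n\subset\mathbb{Z}$ (relative to $\lfloor b^n x\rfloor$) and the fractional part $\{b^nx\}$ converge, the limit is a set $F+K_T-\theta$ that has full measure in $[-1,1]$. This uses that the configuration varies in a finite set, and that measure is continuous under translation. Being compact and of full measure in $[-1,1]$, the finite union $F+K_T-\theta$ contains $[-1,1]$. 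By Baire's theorem, one of the finitely many translates of $K_T$ has nonempty interior, so $K_T$ has nonempty interior.

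The main obstacle is this compactness/limiting argument, specifically making sure that the limit configuration actually lies inside the blown-up copies of $K_T$. I would handle it by noting that $b^n(K_T-x)\cap[-1,1]\subset F_n+K_T-\{b^nx\}$ exactly, with finitely many possible $F_n$. Then $\mathcal{L}([-1,1]\setminus(F_n+K_T-\theta_n))\to0$, and closedness together with continuity of $\theta\mapsto\mathcal{L}((F+K_T-\theta)\cap[-1,1])$ passes this to the limit.
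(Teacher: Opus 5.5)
Your proof is correct. It uses the same ingredient as the paper for $(3)\Rightarrow(1)$, but a different argument for the key step $(1)\Rightarrow(2)$.

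For $(3)\Rightarrow(1)$, the paper goes through the Ahlfors regularity of Proposition~\ref{WSCAlfh}, which itself rests on Zerner's theorem. You apply Theorem~\ref{ThmZerner} directly, which is slightly more economical.

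For $(1)\Rightarrow(2)$, the paper projects $K_T$ to $\widetilde{K}_T\subset\mathbb{T}^1$ and uses that $\widetilde{K}_T$ is forward invariant under $x\mapsto bx$. Since Lebesgue measure is $\times b$-ergodic, a Birkhoff-generic point of $\widetilde{K}_T$ keeps its whole forward orbit in $\widetilde{K}_T$. This forces $\mathcal{L}(\widetilde{K}_T)=1$, hence $\widetilde{K}_T=\mathbb{T}^1$ by compactness.

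You instead blow up at a Lebesgue density point by the factors $b^n$. Because the level-$n$ cylinders are translates of $b^{-n}K_T$ along the grid $b^{-n}\mathbb{Z}$, only finitely many configurations $F_n+K_T-\theta_n$ can meet $[-1,1]$. Compactness in $\theta_n$ and $L^1$-continuity of translations then produce a limit configuration covering $[-1,1]$.

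The trade-off is as follows:
\begin{itemize}
\item The paper's argument is shorter and reuses the $\times b$-invariance that drives Section~\ref{SecMahler}. However, it depends on the exact arithmetic structure: integer translations, the map $\times b$ on the torus, and ergodicity of Lebesgue measure.
\item Your argument is more elementary and more robust. It only needs a uniform bound on the number of blown-up pieces meeting $[-1,1]$ and compactness of the possible configurations, so it adapts beyond the $\times b$ setting, e.g.\ to homogeneous IFSs with weak separation.
\end{itemize}

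Your explicit finite-union (Baire) step is exactly what is needed to pass from $\widetilde{K}_T=\mathbb{T}^1$ to nonempty interior of $K_T$ in the paper's proof, where that step is left implicit.

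One harmless slip: the number of integer candidates meeting the window is at most $\lfloor L\rfloor+3$, not $L+2$. Only boundedness matters, so this does not affect the argument.
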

\begin{proof}
Call again $\widetilde{K}_T \subset \mathbb{T}^1$ the projection of $K_T$ on $\mathbb{T}^1$ and recall that $\widetilde{K}_T$ is $\times b$ invariant. Clearly $(2)\Rightarrow (1).$ 

We prove $(1)\Rightarrow (2):$ Assume that $\mathcal{L}(K_T)>0$ and notice that  $\mathcal{L}(K_T)>0 \Rightarrow \mathcal{L}(\widetilde{K}_T)>0.$ Since $\mathcal{L}$ is $\times b$ ergodic, there exists  $x\in \mathbb{T}^1$ such that $$\lim_{n\to +\infty}\frac{\sum_{k=0}^{n-1}\chi_{\widetilde{K}_T}(b^k x)}{n}=\mathcal{L}(\widetilde{K}_T),$$
where $\chi_{\widetilde{K}_T}$ is the indicator function of $\widetilde{K}_T$. As $y\in \widetilde{K}_T \Rightarrow by \in \widetilde{K}_T,$ one has $$\lim_{n\to +\infty}\frac{\sum_{k=0}^{n-1}\chi_{\widetilde{K}_T}(b^k x)}{n}=1$$
so that $\mathcal{L}(\widetilde{K}_T)=1$. $\widetilde{K}_T$ being compact, this yields $\widetilde{K}_T =\mathbb{T}^1$ and  $K_T$ has non empty interior. 

As $(1)\Rightarrow (3)$  and by Proposition \ref{WSCAlfh}, $K_T$ is $\dim_H K_T$-Alfhors regular, $(1)\Leftrightarrow (3).$  
\end{proof}

\textbf{Acknowledgments:} The author would like to thank J. Barral and  S. Seuret  for their valuable advice regarding this manuscript, as well as B. Wang for pointing out a mistake in the $``$ proof$"$ of Conjecture \ref{ConjectureO}.
\bibliographystyle{plain}
\bibliography{bibliogenubi}

\begin{thebibliography}{10}

\bibitem{Baker}
S.~Baker.
\newblock Intrinsic {D}iophantine approximation for overlapping iterated
  function systems.
\newblock {\em Math.Ann.}, 388:3259–3297, 2024.

\bibitem{BFmult}
J.~Barral and D.~Feng.
\newblock On multifractal formalism for self-similar measures with overlaps.
\newblock {\em Math. Z.}, 298:359--383, 2021.

\bibitem{KintFract}
T.~Benard, W.~He, and H.~Zhang.
\newblock Khintchine dichotomy for self-similar measures.
\newblock {\em to appear in Journal of the American Mathematical Society},
  2025.

\bibitem{BV}
V.~Beresnevitch and S.~Velani.
\newblock A mass transference principle and the {D}uffin-{S}chaeffer conjecture
  for {H}ausdorff measures.
\newblock {\em Ann. Math.}, 164(3), 2006.

\bibitem{Be}
A.S. Besicovitch.
\newblock A general form of the covering principle and relative differentiation
  of additive functions.
\newblock {\em Math. Proc. Camb. Phil. Soc.}, 41:103–110, 1945.

\bibitem{BourgainSumExp}
J.~Bourgain.
\newblock Exponential sum estimates over subgroups of $\mathbb{Z}_q^*$, q
  arbitrary.
\newblock {\em Journal d'analyse mathématique}, 97:317–355, 2005.

\bibitem{Bremont}
J.~Bremont.
\newblock Self-similar measures and the {R}ajchman property.
\newblock {\em Annales Henri Lebesgue}, 4:973--1004, 2021.

\bibitem{Bugdu}
Y.~Bugeaud.
\newblock Metric {D}iophantine approximation on the middle-third {C}antor set.
\newblock {\em J. Europ. Math. Soc.}, 2016.

\bibitem{YuVarjuGene}
S.~Chen.
\newblock The hausdorff dimension of the intersection of $\psi$-well
  approximable numbers and self-similar sets.
\newblock {\em arXiv:2510.17096}, 2025.

\bibitem{YuVarju}
S.~Chow, P.~Varjù, and H.~Yu.
\newblock Counting rationals and diophantine approximation in missing-digit
  {C}antor sets.
\newblock {\em arXiv:2402.18395}, 2024.

\bibitem{ED4}
E.~Daviaud.
\newblock Dynamical {D}iophantine approximation and shrinking targets for
  ${C^1}$ weakly conformal {IFS}s with overlaps.
\newblock {\em Ergodic theory and dynamical systems}, 45, issue 6:1777--1826,
  2023.

\bibitem{ED3}
E.~Daviaud.
\newblock A dimensional mass transference principle for {B}orel probability
  measures and applications.
\newblock {\em Advances in mathematics}, 474:47 pp., 2025.

\bibitem{ErdTur}
P~Erdös and P.~Turàn.
\newblock On a problem in the theory of uniform distribution. {II}.
\newblock {\em Indagationes Mathematicae}, 51:1262–1269, 1948.

\bibitem{Dimh=DimB}
K.~Falconer.
\newblock Dimension of quasi-self-similar sets.
\newblock {\em Proc.Amer.Math.Soc}, 106:543--554, 1989.

\bibitem{F}
K.~Falconer.
\newblock {\em Fractal geometry}.
\newblock John Wiley \& Sons, Inc., Hoboken, NJ, second edition, 2003.
\newblock Mathematical foundations and applications.

\bibitem{Feng2007}
D.~Feng.
\newblock Gibbs properties of self-conformal measures and the multifractal
  formalism.
\newblock {\em Ergodic Theory Dynam. Systems}, 27(3):787--812, 2007.

\bibitem{FH}
D.~Feng and H.~Hu.
\newblock Dimension theory of iterated function systems.
\newblock {\em Comm. Pure Appl. Math.}, 62:1435--1500, 2009.

\bibitem{DefWSC}
D.J. Feng and K.S. Lau.
\newblock Multifractal formalism for self-similar measures with weak separation
  condition.
\newblock {\em Journal de mathématiques pures et appliquées}, 92(4):407--428,
  2009.

\bibitem{FishSim}
S.~Fishman and D.~Simmons.
\newblock Intrinsic approximation for fractals defined by rational iterated
  function systems: Mahler's research suggestion.
\newblock {\em Proc.Lond.Math.Soc.}, 109, 2014.

\bibitem{HeLiaoJarnik}
Y.~He and L.~Liao.
\newblock Jarník-type theorem for self-similar sets.
\newblock {\em arXiv:2602.01307}, 2026.

\bibitem{Hutchinson}
J.E. Hutchinson.
\newblock Fractals and self similarity.
\newblock {\em Indiana Univ. Math. J.}, 30:713--747, 1981.

\bibitem{Maynkou}
D.~Koukoulópoulos and J.~Maynard.
\newblock On the {D}uffin-{S}haeffer conjecture.
\newblock {\em Ann. Math.}, 192:251--307, 2020.

\bibitem{LSV}
J.~Levesley, C.~Salp, and S.~Velani.
\newblock On a problem of {K}. {M}ahler: Diophantine approximation and {C}antor
  sets.
\newblock {\em Math.Ann.}, 338:97--118, 2007.

\bibitem{Mahler}
K.~Mahler.
\newblock Some suggestions for further research.
\newblock {\em Bull.Austral.Math.Soc}, 29:101--108, 1984.

\bibitem{osc=sosc}
A.~Schief.
\newblock Separation properties for self-similar sets.
\newblock {\em Proc. Amer. Math. Soc.}, 122, 1, 1994.

\bibitem{ShmidtSalem}
K.~Schmidt.
\newblock On peridic expansions of {P}isot numbers and {S}alem numbers.
\newblock {\em Bull.Lond.Math.Soc}, 12(4):269--278, 1980.

\bibitem{TWWTriad}
B.~Tan, B.~Wang, and J.~Wu.
\newblock Mahler's question for intrinsic diophantine approximation on triadic
  {C}antor set: the divergence theory.
\newblock {\em Math. Z.}, 2021.

\bibitem{ZernerWSC}
M.P.W. Zerner.
\newblock Weak separation properties for self-similar sets.
\newblock {\em Proc.Amer.Math.Soc.}, 124:3529--3539, 1996.

\end{thebibliography}

\end{document}